\newcommand{\tuple}[1]{\vec{#1}}
\newcommand {\indep}[3] {#2 ~\bot_{#1}~ #3}
\newcommand {\indepc}[2] {#1 ~\bot~ #2}
\def\dom{\mbox{Dom}}
\def\free{\mbox{Free}}
\def\rel{\mbox{Rel}}
\def\part{\mathcal{P}}
\def\arity{\mbox{Arity}}
\def\traclo{\mbox{TC}}
\newtheorem{theo}{Theorem}[section]
\newdefinition{defin}[theo]{Definition}
\newtheorem{lemma}[theo]{Lemma}
\newtheorem{propo}[theo]{Proposition}
\newtheorem{coro}[theo]{Corollary}
\newproof{proof}{Proof}
\journal{arXiv}
\begin{document}

\begin{frontmatter}



\title{Inclusion and Exclusion Dependencies in Team Semantics\\\large{On some logics of imperfect information}}


\author{Pietro Galliani}

\ead{pgallian@gmail.com}
\address{Faculteit der Natuurwetenschappen, Wiskunde en Informatica\\
Institute for Logic, Language and Computation\\
Universiteit van Amsterdam\\
P.O. Box 94242, 1090 GE AMSTERDAM, The Netherlands\\
Phone: +31 020 525 8260\\
Fax (ILLC): +31 20 525 5206}

\begin{abstract}
We introduce some new logics of imperfect information by adding atomic formulas corresponding to \emph{inclusion} and \emph{exclusion} dependencies to the language of first order logic. The properties of these logics and their relationships with other logics of imperfect information are then studied. Furthermore, a game theoretic semantics for these logics is developed. As a corollary of these results, we characterize the expressive power of independence logic, thus answering an open problem posed in (Gr\"adel and V\"a\"an\"anen, 2010).
\end{abstract}

\begin{keyword}
dependence \sep independence \sep imperfect information \sep team semantics \sep game semantics \sep model theory
\MSC[2010] 03B60 \sep \MSC[2010] 03C80 \sep \MSC[2010] 03C85
\end{keyword}

\end{frontmatter}


\section{Introduction}
The notions of dependence and independence are among the most fundamental ones considered in logic, in mathematics, and in many of their applications. For example, one of the main aspects in which modern predicate logic can be thought of as superior to medieval term logic is that the former allows for quantifier alternation, and hence can express certain complex patterns of dependence and independence between variables that the latter cannot easily represent. A fairly standard example of this can be seen in the formal representations of the notions of \emph{continuity} and \emph{uniform continuity}: in the language of first order logic, the former property can be expressed as $\forall x (\forall \epsilon > 0)(\exists \delta > 0) \forall x' ( |x - x'| < \delta \rightarrow |f(x) - f(x')| < \epsilon)$, while the latter can be expressed as $(\forall \epsilon > 0)(\exists \delta > 0) \forall x \forall x' (|x - x'| < \delta \rightarrow |f(x) - f(x')| < \epsilon)$. The difference between these two expressions should be clear: in the first one, the value of the variable $\delta$ is a function of the values of the variables $x$ and $\epsilon$, while in the second one it is a function of the value of the variable $\epsilon$ alone. This very notion of functional dependence also occurs, at first sight rather independently, as one of the fundamental concepts of Database Theory, and in that context it proved itself to be highly useful both for the specification and study of \emph{normal forms} and for that of \emph{constraints} over databases.\footnote{We will not discuss these issues in any detail in this work; for a handy reference, we suggest \cite{date03} or any other database theory textbook.}\\

\emph{Logics of imperfect information} are a family of logical formalisms whose development arose from the observation that not all possible patterns of dependence and independence between variables may be represented in first order logic. Among these logics, \emph{dependence logic} \cite{vaananen07} is perhaps the one most suited for the analysis of the notion of dependence itself, since it isolates it by means of \emph{dependence atoms} which correspond, in a very exact sense, to functional dependencies of the exact kind studied in Database Theory. The properties of this logic, and of a number of variants and generalizations thereof, have been the object of much research in recent years, and we cannot hope to give here an exhaustive summary of the known results. We will content ourselves, therefore, to recall (in Subsection \ref{subsect:deplog}) the ones that will be of particular interest for the rest of this work.\\

\emph{Independence logic} \cite{gradel10} is a recent variant of dependence logic. In this new logic, the fundamental concept that is being added to the first order language is not \emph{functional dependence}, as for the case of dependence logic proper, but \emph{informational independence}: as we will see, this is achieved by considering \emph{independence atoms} $\indep{x}{y}{z}$, whose informal meaning corresponds to the statement ``for any fixed value of $x$, the sets of the possible values for $y$ and $z$ are independent''. Just as dependence logic allows us to reason about the properties of functional dependence, independence logic does the same for this notion. Much is not known at the moment about independence logic; in particular, one open problem mentioned in \cite{gradel10} concerns the \emph{expressive power} of this formalism over open formulas. As we will see, a formula in a logic of imperfect information defines, for any suitable model $M$, the family of its \emph{trumps}, that is, the family of all sets of assignments (\emph{teams}, in the usual terminology of dependence logic) which satisfy the formula. This differs from the case of first order logic, in which formulas satisfy or do not satisfy single assignments, and the intuitive reason for this should be understandable: asking whether a statement such as ``the values of the variables $x$ and $y$ are independent'' holds with respect of a single variable assignment is meaningless, since such an assertion can be only interpreted with respect to a family of \emph{possible assignments}. A natural question is then \emph{which} families of sets of possible variable assignments may be represented in terms of independence logic formulas.\footnote{The analogous question for dependence logic was answered in \cite{kontinenv09}, and we will report that answer as Theorem \ref{theo:DLform} of the present work.} An upper bound for the answer is in \cite{gradel10} already: all classes of sets of assignments which are definable in independence logic correspond to second order relations which are expressible in existential second order logic. In this work, we will show that this is also a \emph{lower bound}: a class of sets of assignments is definable in independence logic if and only if it is expressible in existential second order logic. This result, which we will prove as Corollary \ref{coro:ILform}, implies that independence logic is not merely a formalism obtained by adding an arbitrary, although reasonable-looking, new kind of atomic formula to the first order language. It -- and any other formalism equivalent to it -- is instead a natural upper bound for a general family of logics of imperfect information: in particular, if over finite models an arbitrary logic of imperfect information characterizes only teams which are in NP then, by Fagin's theorem \cite{fagin74}, this logic is (again, over finite models) equivalent to some fragment of independence logic. \\

The way in which we reach this result is also perhaps of some interest. Even though functional dependence and informational independence are certainly very important notions, they are by no means the only ones of their kind that are of some relevance. In the field of database theory, a great variety of other constraints over relations\footnote{Such constraints are usually called \emph{dependencies}, for historical reasons; but they need not correspond to anything resembling the informal idea of dependency.} has indeed been studied. Two of the simplest such constraints are \emph{inclusion dependencies} and \emph{exclusion dependencies}, whose definitions and basic properties we will recall in Subsection \ref{subsect:incexc}; then, in Subsections \ref{subsect:inclog} and \ref{subsect:exclog}, we will develop and study the corresponding logics.\footnote{Subsection \ref{subsect:equilog} briefly considers the case of \emph{equiextension dependencies} and shows that, for our purposes, they are equivalent to inclusion dependencies.} As we will see, ``exclusion logic'' is equivalent, in a strong sense, to dependence logic, while ``inclusion logic'' is properly contained in independence logic but incomparable with dependence logic. Then, in Subsection \ref{subsect:ielogic}, we will consider \emph{inclusion/exclusion logic}, that is, the logic obtained by adding atoms for inclusion \emph{and} exclusion dependencies to the language of first order logic, and prove that it is equivalent to independence logic.\\

Section \ref{sect:gamesem} develops a \emph{game theoretic semantics} for inclusion/exclusion logic. A game-theoretic semantics assigns truth values to expressions according to the properties of certain \emph{semantic games} (often, but not always, in terms of the existence of \emph{winning strategies} for these games). Historically, the first semantics for logics of imperfect information were of this kind; and even though, for many purposes, team semantics is a more useful and clearer formalism, we will see that studying the relationship between game semantics and team semantics allows us to better understand certain properties of the semantic rules for disjunction and existential quantification. Then, in Section \ref{sect:defin}, we examine the classes of teams definable by inclusion/exclusion logic formulas (or equivalently, by independence logic formulas), and we prove that these are precisely the ones corresponding to second order relations definable in existential second order logic. \\

Finally, in the last section we show that, as a consequence of this, some of the most general forms of dependency studied in database theory are expressible in independence logic. This, in the opinion of the author, suggests that logics of imperfect information (and, in particular, independence logic) may constitute an useful theoretical framework for the study of such dependencies and their properties. 
\section{Dependence and independence logic}
In this section, we will recall a small number of known results about dependence and independence logic. Some of the basic definitions of these logics will be left unsaid, as they will be later recovered in a slightly more general setting in Subsection \ref{subsect:laxstrict}. This section and that subsection, taken together, can be seen as a very quick crash course on the field of logics of imperfect information; the reader who is already familiar with such logics can probably skim through most of it, paying however some attention to the discussion of independence logic of Subsection \ref{subsect:indlog}, the alternative semantic rules of Definition \ref{def:altsem} and the subsequent discussion.
\subsection{Dependence logic}
\label{subsect:deplog}
Dependence logic \cite{vaananen07} is, together with IF logic (\cite{hintikka96}, \cite{tulenheimo09}), one of the most widely studied logics of imperfect information.  In brief, it can be described as the extension of first order logic obtained by adding \emph{dependence atoms} $=\!\!(t_1 \ldots t_n)$ to its language, with the informal meaning of ``The value of the term $t_n$ is functionally determined by the values of the terms $t_1 \ldots t_{n-1}$''.\\

This allows us to express patterns of dependence and independence between variables which are not expressible in first order logic: for example, in the formula $\forall x \exists y \forall z \exists w (=\!\!(z, w) \wedge \phi(x,y,z,w))$ the choice of the value for the variable $w$ depends only on the value of the variable $w$, and not from the values of the variables $x$ and $y$ - or, in other words, this expression is equivalent to the branching quantifier (\cite{henkin61}) sentence
\[
\left(\begin{array}{c c}
	\forall x & \exists y \\
	\forall z & \exists w 
\end{array}\right)
 \phi(x, y, z, w)
\]
and the corresponding Skolem normal form is $\exists f \exists g \forall x \forall z \phi(x, f(x), z, g(z))$.

The idea of allowing more general patterns of dependence and independence between quantifiers than the ones permitted in first order logic was, historically, the main reason for the development of logics of imperfect information: in particular, \cite{hintikka96} argues that the restriction on these patterns forced by first order logic has little justification, and that hence logics of imperfect information are a more adequate formalism for reasoning about the foundations of mathematics. 

No such claim will be made or discussed in this work. But in any case, the idea of allowing more general patterns of dependence and independence between quantifiers seems a very natural one. In IF logic, the notion of dependence is, however, inherently connected with the notion of quantification: for example, the above expression would be written in it as $\forall x \exists y \forall z (\exists z / x, y) \phi(x, y, z, w)$, where $(\exists z / x, y)$ is to be read as ``there exists a $z$, independent from $x$ and $y$, such that \ldots''. Dependence logic and its variants, instead, prefer to separate the notion of dependency from the notion of quantification: in this second group of logics of imperfect information, dependence patterns between quantifiers are exactly as first order logic and our linguistic intuitions would suggest, but dependence atoms may be used to specify that the value of a certain variable (or, in general, of a certain term) must be a function of certain other values. This corresponds precisely to the notion of \emph{functional dependence} which is one of the central tools of Database Theory; and indeed, as we will recall later in this work, the satisfaction conditions for these atoms are in a very precise relationship with the formal definition of functional dependence. 

This, at least in the opinion of the author, makes dependence logic an eminently suitable formalism for the study of the notion of functional dependence and of its properties; and as we will see, one of the main themes of the present work will consist in the development and study of formalisms which have a similar sort of relationship with other notions of dependency.\\

We will later recall the full definition of the \emph{team semantics} of dependence logic, an adaptation of Hodges' compositional semantics for IF-logic (\cite{hodges97}) and one of the three equivalent semantics for dependence logic described in \cite{vaananen07}.\footnote{The readers interested in a more thorough explanation of the team semantics and of the two game theoretic semantics for dependence logic are referred to \cite{vaananen07} itself.} It is worth noting already here, though, that the key difference between Hodges semantics and the usual Tarskian semantics is that in the former semantics the satisfaction relation $\models$ associates to every first order model\footnote{In all this paper, I will assume that first order models have at least two elements in their domain.} $M$ and formula $\phi$ a set of \emph{teams}, that is, a set of sets of assignments, instead of just a set of assignments as in the latter one.\\

As discussed in \cite{hodges07}, the fundamental intuition behind Hodges' semantics is that a team is a representation of an \emph{information state} of some agent: given a model $M$, a team $X$ and a suitable formula $\phi$, the expression
\[
	M \models_X \phi
\]
asserts that, from the information that the ``true'' assignment $s$ belongs to the team $X$, it is possible to infer that $\phi$ holds - or, in game-theoretic terms, that the Verifier has a strategy $\tau$ which is winning for all plays of the game $G(\phi)$ which start from any assignment $s \in X$. 

The satisfaction conditions for the dependence atom is then given by the following semantic rule \textbf{TS-dep}:
\begin{defin}[Dependence atoms]
	\label{def:dep}
	Let $M$ be a first order model, let $X$ be a team over it, let $n \in \mathbb N$, and let $t_1 \ldots t_n$ be terms over the signature of $M$ and with variables in $\dom(X)$. Then 
\begin{description}
\item[TS-dep:] $M \models_X =\!\!(t_1 \ldots t_n)$ if and only if, for all $s, s' \in X$ such that  $t_i \langle s \rangle = t_i\langle s'\rangle \mbox{ for } i = 1 \ldots n-1$, $t_n\langle s \rangle = t_n\langle s'\rangle$.
\end{description}
\end{defin}

This rule corresponds closely to the definition of \emph{functional dependency} commonly used in Database Theory (\cite{codd72}): more precisely, if $X(t_1 \ldots t_n)$ is the relation $\{(t_1\langle s\rangle, \ldots, t_n\langle s \rangle) : s \in X\}$ then
\[
M \models_X =\!\!(t_1 \ldots t_n) \Leftrightarrow X(t_1 \ldots t_n) \models \{t_1 \ldots t_{n-1}\} \rightarrow t_n
\]
where the right hand expression states that, in the relation $X(t_1 \ldots t_n)$, the value of the last term $t_n$ is a function of the values of $t_1 \ldots t_{n-1}$. 

Another formulation of the truth condition of a dependence atom\\ $=\!\!(t_1 \ldots t_n)$, easily seen to be equivalent to this one, is the following: a team $X$ satisfies such an atom if and only if a rational agent $\alpha$, whose beliefs about the identity of the ``true'' assignment $s$ are described by $X$, would be capable of inferring the value of $t_n$ from the values of $t_1 \ldots t_{n-1}$.\footnote{Decomposing the notion further, this is equivalent to stating that if the values of $t_1 \ldots t_{n-1}$ for the true assignment $s \in X$ were announced to the agent then he or she would also learn the value of $t_{n}$. The properties of this sort of announcement operators for dependence logic are discussed in \cite{galliani10}.} A special case of dependence atom, useful to consider in order to clarify our intuitions, is constituted by \emph{constancy atoms} $=\!\!(t)$: applying the above definitions, we can observe that $M \models_X =\!\!(t)$ if and only if the value $t\langle s\rangle$ is the same for all assignments $s \in X$ - or, using the agent metaphor, if and only if an agent $\alpha$ as above \emph{knows} the value of $t$.\footnote{The existence of a relation between these notions and the ones studied in the field of epistemic modal logic is clear, but to the knowledge of the author the matter has not yet been explored in full detail. See \cite{vanbenthem06} for some intriguing reflections about this topic.}\\

The following known results will be of some use for the rest of this work:
\begin{theo}[Locality \cite{vaananen07}]
	\label{theo:DLloc}
	Let $M$ be a first order model and let $\phi$ be a dependence logic formula over the signature of $M$ with free variables in $\tuple v$. Then, for all teams $X$ with domain $\tuple w \supseteq \tuple v$, if $X'$ is the restriction of $X$ to $\tuple v$ then 
	\[
		M \models_X \phi \Leftrightarrow M \models_{X'} \phi.
	\]
\end{theo}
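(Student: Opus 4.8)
The plan is to proceed by induction on the structure of $\phi$. Throughout, write $r : X \to X'$ for the restriction map $s \mapsto s\!\restriction_{\tuple v}$; since $\dom(X) = \tuple w \supseteq \tuple v$ this map is well defined, and since $X'$ is by definition the restriction of $X$ to $\tuple v$ it is surjective. The single observation that drives every case is that the value $t\langle s\rangle$ of a term $t$ whose variables lie in $\tuple v$ depends only on $r(s)$, that is, $t\langle s\rangle = t\langle r(s)\rangle$ for all $s \in X$. The one structural feature to keep in mind is that $r$ need \emph{not} be injective: distinct assignments of $X$ may agree on $\tuple v$ and collapse to the same element of $X'$, and handling this collapse correctly is the only genuine difficulty in the argument.

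For the base cases, first-order literals and dependence atoms $=\!\!(t_1 \ldots t_n)$ have all their terms' variables among the free variables of $\phi$, hence in $\tuple v$. For a literal, satisfaction by a team reduces to satisfaction by each of its assignments, and $r$ being surjective with $t\langle s\rangle = t\langle r(s)\rangle$ gives both directions at once. For a dependence atom one uses surjectivity explicitly: given $a, a' \in X'$ agreeing on $t_1 \ldots t_{n-1}$, lift them along $r$ to $s, s' \in X$, which then also agree on $t_1 \ldots t_{n-1}$, so by \textbf{TS-dep} on $X$ they agree on $t_n$, whence so do $a, a'$; the converse is immediate. Conjunction is trivial, as both conjuncts are evaluated on the same team. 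For disjunction one uses that restriction commutes with union: a splitting $X = Y \cup Z$ descends to $X' = Y\!\restriction_{\tuple v} \cup\, Z\!\restriction_{\tuple v}$, and conversely a splitting $X' = Y' \cup Z'$ lifts via preimages to $Y = r^{-1}[Y']$, $Z = r^{-1}[Z']$ with $Y \cup Z = X$ and $Y\!\restriction_{\tuple v} = Y'$, $Z\!\restriction_{\tuple v} = Z'$; in each case the induction hypothesis applies to the components.

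The substantive case is the existential quantifier. Write $y$ for the quantified variable, which we may assume lies outside $\tuple v$, so that the free variables of the matrix lie in $\tuple v \cup \{y\}$; the goal is to relate restriction to the supplementation operation $X[F/y]$. In the backward direction, from a witnessing function $F'$ on $X'$ one simply sets $F = F' \circ r$, and surjectivity of $r$ yields the clean identity $(X[F/y])\!\restriction_{\tuple v \cup \{y\}} = X'[F'/y]$, after which the induction hypothesis closes the case; the universal quantifier is handled the same way via $(X[M/y])\!\restriction_{\tuple v \cup \{y\}} = X'[M/y]$. The forward direction is where the difficulty concentrates, since a function $F$ witnessing $M \models_X \exists y\, \phi$ cannot in general be pushed forward along $r$ assignment by assignment: two assignments of $X$ agreeing on $\tuple v$ may receive different values of $F$. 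Under the set-valued (lax) reading of the existential rule I would repair this by defining $F'(a) = \bigcup \{F(s) : s \in X,\ r(s) = a\}$, which is nonempty and again gives $(X[F/y])\!\restriction_{\tuple v \cup \{y\}} = X'[F'/y]$; under a single-valued (strict) reading the pushforward yields only an inclusion $X'[F'/y] \subseteq (X[F/y])\!\restriction_{\tuple v \cup \{y\}}$, and one instead appeals to the downward closure of dependence logic to transfer satisfaction to the smaller team.

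The main obstacle is therefore precisely this forward commutation of restriction with existential supplementation. Everything else is bookkeeping; the one conceptual point worth isolating is the non-injectivity of $r$, which forces the union-over-the-fibre definition of $F'$ (or, in the strict setting, an appeal to downward closure) and which is exactly where the identification of distinct assignments agreeing on $\tuple v$ actually bites.
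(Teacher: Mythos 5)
Your proof is correct and takes essentially the same route as the paper's own locality argument: the paper cites this theorem to V\"a\"an\"anen and gives a detailed locality proof only for I/E logic (Theorem \ref{theo:ielocal}), which is exactly your induction --- preimage splitting $Y = r^{-1}[Y']$, $Z = r^{-1}[Z']$ for disjunction, and the union-over-the-fibre function $H'(s') = \bigcup\{H(s) : s_{\upharpoonright \tuple v} = s'\}$ for the lax existential. Your extra observation that the strict (single-valued) rule \textbf{TS-$\exists_S$} requires an appeal to downwards closure (Theorem \ref{theo:DLdc}) is also correct, and is precisely the point that makes the argument work for dependence logic's own semantics while it fails for inclusion logic.
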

As an aside, it is worth pointing out that the above property does not hold for most variants of $IF$-logic: for example, if $\dom(M) = \{0,1\}$ and $X = \{(x:0, y:0), (x:1, y:1)\}$ it is easy to see that $M \models_X (\exists z / y) z = y$, even though for the restriction $X'$ of $X$ to $\free((\exists z/y) z=y) = \{y\}$ we have that $M \not \models_{X'} (\exists z/y) z=y$.\footnote{This is a typical example of \emph{signalling} (\cite{hintikka96}, \cite{janssen06}), one of the most peculiar and, perhaps, problematic aspects of $IF$-logic.}

\begin{theo}[Downwards Closure Property \cite{vaananen07}]
	\label{theo:DLdc}
	Let $M$ be a model, let $\phi$ be a dependence logic formula over the signature of $M$, and let $X$ be a team over $M$ with domain $\tuple v \supseteq \free(\phi)$ such that $M \models_X \phi$. Then, for all $X' \subseteq X$, 
	\[
		M \models_{X'} \phi. 
	\]
\end{theo}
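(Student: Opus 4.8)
The plan is to proceed by structural induction on the formula $\phi$, verifying that each of the semantic rules preserves satisfaction under passage to a subteam. Throughout I assume $X' \subseteq X$, noting that both teams share the domain $\tuple v \supseteq \free(\phi)$, so that $\phi$ is interpretable over either of them; the case $X' = \emptyset$ is immediate, since the empty team satisfies every formula. For the base cases, first order literals (relation atoms, equalities, and their negations) are satisfied by a team exactly when each of its assignments satisfies them in the ordinary Tarskian sense, and such a condition is manifestly inherited by any subset. The only nonstandard base case is the dependence atom $=\!\!(t_1 \ldots t_n)$: here the rule \textbf{TS-dep} asserts a property quantified universally over all pairs $s, s' \in X$, and restricting the range of this quantifier to pairs drawn from $X' \subseteq X$ can only make the statement easier to satisfy, so $M \models_X =\!\!(t_1 \ldots t_n)$ yields $M \models_{X'} =\!\!(t_1 \ldots t_n)$.

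For the inductive step I would treat the connectives and quantifiers in turn. Conjunction is immediate: if $M \models_X \phi \wedge \psi$ then $M \models_X \phi$ and $M \models_X \psi$, and the induction hypothesis delivers both restrictions to $X'$. For disjunction, suppose $X = Y \cup Z$ with $M \models_Y \phi$ and $M \models_Z \psi$; I would split $X'$ as $(Y \cap X') \cup (Z \cap X')$ and apply the induction hypothesis to the subteams $Y \cap X' \subseteq Y$ and $Z \cap X' \subseteq Z$, obtaining a witnessing decomposition of $X'$. The universal quantifier is handled by observing that the operation adjoining every domain element as a value of $x$ is monotone, so that the team built from $X'$ is contained in the one built from $X$, to which the induction hypothesis applies. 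The existential quantifier is analogous: given a choice function witnessing $M \models_X \exists x \phi$, its restriction to $X'$ witnesses $M \models_{X'} \exists x \phi$, because the modified team over $X'$ sits inside the modified team over $X$.

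I expect the disjunction case to demand the most care, since it is there that one must check that intersecting the witnessing split $X = Y \cup Z$ with $X'$ still yields a genuine decomposition of $X'$ into parts satisfying $\phi$ and $\psi$ --- an argument that leans on the downward closure of the subformulas, and hence on the induction hypothesis, rather than on any closure property of the connective in isolation. More conceptually, the reason the induction closes for dependence logic, and the step I would flag as the crux, is that the sole atom beyond first order logic, the dependence atom, is itself downward closed. This is precisely the feature that will fail once inclusion atoms are introduced, whose satisfaction can be destroyed by deleting assignments from a team, and it is what explains why no analogous property should be expected for the inclusion logic studied later in this work.
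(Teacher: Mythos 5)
The paper states this theorem without proof, citing it as a known result from \cite{vaananen07}; your structural induction is precisely the standard argument given there, and it is correct: the dependence atom's universally quantified condition, the intersection-with-$X'$ split for disjunction, and the containments $X'[F/x] \subseteq X[F/x]$ and $X'[M/x] \subseteq X[M/x]$ for the quantifiers all go through as you describe. Your closing observation --- that the induction closes only because the dependence atom is itself downward closed, which is exactly what fails for inclusion atoms --- matches the paper's own explanation of why inclusion logic lacks this property.
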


\begin{theo}[Dependence logic sentences and $\Sigma_1^1$ \cite{vaananen07}]
	\label{theo:DLsent}
	For every dependence logic sentence $\phi$, there exists a $\Sigma_1^1$ sentence $\Phi$ such that 
	\[
		M \models_{\{\emptyset\}} \phi \Leftrightarrow M \models \Phi.
	\]

	Conversely, for every $\Sigma_1^1$ sentence $\Phi$ there exists a dependence logic sentence $\phi$ such that the above holds. 
\end{theo}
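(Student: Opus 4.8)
The plan is to prove the two directions separately, in each case reducing the team-semantic satisfaction relation to a second-order condition on the relation that encodes the team.

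For the direction from dependence logic to $\Sigma_1^1$, I would prove a stronger statement by induction on the structure of the formula. Given a dependence logic formula $\phi$ with free variables among $v_1, \ldots, v_k$, I associate to each team $X$ with domain $\{v_1, \ldots, v_k\}$ the $k$-ary relation $\mathrm{rel}(X) = \{(s(v_1), \ldots, s(v_k)) : s \in X\}$, and I claim there is a $\Sigma_1^1$ formula $\hat\phi(R)$, in the signature of $M$ augmented by a fresh $k$-ary relation symbol $R$, such that $M \models_X \phi$ iff $(M, \mathrm{rel}(X)) \models \hat\phi(R)$. The base cases are direct transcriptions of the semantic rules: a first-order literal $\alpha$ gives $\forall \vec v (R\vec v \to \alpha)$, and the atom $=\!\!(t_1 \ldots t_n)$ gives the universally quantified implication expressing rule \textbf{TS-dep}. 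For conjunction I take $\hat\psi(R) \wedge \hat\theta(R)$; for disjunction I existentially quantify two subrelations $Y, Z$ of $R$ with $Y \cup Z = R$ and require $\hat\psi(Y) \wedge \hat\theta(Z)$; for $\exists x\,\psi$ I existentially quantify a $(k+1)$-ary relation $R'$ that supplements each tuple of $R$ with at least one new value and conversely projects into $R$, asserting $\hat\psi(R')$; and for $\forall x\,\psi$ I take the supplement $R'$ adjoining every domain element, asserting $\hat\psi(R')$. The point to check at each step is that the construction stays inside $\Sigma_1^1$: this holds because $\Sigma_1^1$ is closed under conjunction, disjunction, and first-order quantification (the last by the usual bundling of choice functions into a single higher-arity relation), so prefixing the newly introduced existential second-order quantifiers preserves the prefix form. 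Specializing to a sentence ($k = 0$), $\mathrm{rel}(\{\emptyset\})$ is the true nullary relation, and substituting $\top$ for $R$ in $\hat\phi$ yields the desired $\Sigma_1^1$ sentence $\Phi$.

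For the converse, I would first put the given $\Sigma_1^1$ sentence into the normal form $\exists f_1 \cdots \exists f_n\, \forall x_1 \cdots \forall x_m\, \theta$, where $\theta$ is quantifier-free: the existential relation quantifiers are replaced by existential function quantifiers (using two existentially chosen distinct elements as truth values, possible since models are assumed to have at least two elements), and the first-order part is Skolemised and flattened, so that each guessed function occurs applied only to tuples of the universally quantified variables. I would then construct the dependence logic sentence
\[
\phi \;:=\; \forall x_1 \cdots \forall x_m\, \exists y_1 \cdots \exists y_r\, \Big( \bigwedge_{j} =\!\!(\vec t_j, y_j)\; \wedge\; \theta^* \Big),
\]
where the variables $y_j$ range over the occurrences of function terms $f_{i_j}(\vec t_j)$ in $\theta$, the atom $=\!\!(\vec t_j, y_j)$ records that $y_j$ is functionally determined by its arguments $\vec t_j$, and $\theta^*$ is obtained from $\theta$ by replacing each such function term with the corresponding $y_j$ and conjoining the consistency conditions $\vec t_j = \vec t_{j'} \to y_j = y_{j'}$ for occurrences of the same function symbol, so that the chosen values genuinely come from a single function.

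The correctness of this construction is the heart of the argument and, in my view, the main obstacle. In the direction from $\Phi$ to $\phi$, Skolem functions witnessing $\Phi$ determine, for each assignment to $\vec x$, the values $y_j := f_{i_j}(\vec t_j)$; these satisfy the dependence atoms precisely because they are read off functions, the consistency conditions hold automatically, and $\theta^*$ holds throughout the universally supplemented team because $\theta$ holds of the Skolem functions. In the converse direction, a team witnessing $\phi$ assigns to each tuple of values for $\vec x$ a choice of the $y_j$; the dependence atoms together with the consistency conditions guarantee that these choices are governed by functions of the stated arguments, which can then serve as the witnessing functions for $\Phi$, while the universal quantification over $\vec x$ in $\phi$ matches the universal first-order quantifier of the normal form. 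The delicate points are the faithful encoding of a single second-order function by several dependence atoms when it occurs at distinct argument tuples (handled by the consistency conditions) and the precise matching between the existential-quantifier rule of the team semantics and the second-order function quantifiers; these are exactly the places where the assumption that every model has at least two elements and the functional reading of dependence atoms via rule \textbf{TS-dep} are used.
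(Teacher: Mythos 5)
Your first direction (from dependence logic into $\Sigma_1^1$) is the standard inductive translation and is essentially correct. Note, for context, that the paper does not prove Theorem \ref{theo:DLsent} at all: it cites it from \cite{vaananen07}. The closest construction actually carried out in the paper is the proof of Theorem \ref{theo:Sigma112IL}, and comparing your second direction against that proof exposes the problem.

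The converse direction has a genuine gap, and it sits exactly where you locate ``the heart of the argument'': the consistency conditions $\vec t_j = \vec t_{j'} \to y_j = y_{j'}$ are too weak to glue the per-occurrence functions into a single witnessing function. Since these conditions sit inside the scope of $\forall x_1 \cdots \forall x_m$ and are stated on the term tuples $\vec t_j$ themselves, they only enforce agreement at argument values realized \emph{simultaneously by one assignment}, i.e.\ at points where $\vec t_j\langle s\rangle = \vec t_{j'}\langle s\rangle$ for a single $s$; they say nothing about two different assignments hitting the same argument value at two different occurrences. Concretely, take $\Phi := \exists f\, \forall x_1 \forall x_2\, (f(x_1,x_2) = x_1 \wedge f(x_2,x_1) = x_1)$, which is in your normal form and is false in every model with at least two elements (the first conjunct forces $f$ to be the first projection, the second forces it to be the second projection). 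Your translation of $\Phi$ is
\[
\phi = \forall x_1 x_2\, \exists y_1 y_2 \big( =\!\!(x_1 x_2, y_1) \wedge =\!\!(x_2 x_1, y_2) \wedge (x_1 = x_2 \to y_1 = y_2) \wedge y_1 = x_1 \wedge y_2 = x_1 \big),
\]
and this is \emph{true} in every model: choose $y_1 = y_2 = x_1$. Both dependence atoms hold, because $x_1$ is a function of $(x_1,x_2)$ and also a function of $(x_2,x_1)$; the consistency condition holds trivially; and $\theta^*$ holds by construction. Here the two occurrence functions are the first and the second projection: they agree on the diagonal, which is the only place your consistency condition ever looks, but they cannot be merged into one $f$.

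The repair is to do the gluing at the $\Sigma_1^1$ level rather than inside the dependence logic formula: strengthen the normal form so that each function symbol occurs applied to a \emph{unique fixed tuple of distinct, fresh universally quantified variables} --- splitting repeated occurrences of $f$ into distinct symbols $f^1, f^2$ applied to disjoint fresh tuples $\vec z_1, \vec z_2$, with the added first-order clause $\vec z_1 = \vec z_2 \to f^1(\vec z_1) = f^2(\vec z_2)$, which, because $\vec z_1$ and $\vec z_2$ now range independently over \emph{all} tuples, genuinely asserts $f^1 = f^2$ everywhere. After this normalization each dependence atom $=\!\!(\vec z_i, y_i)$ pins down a function on all of $\dom(M)^{|\vec z_i|}$, the consistency clauses become global, and your argument goes through. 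This is precisely the normal form used in the paper's proof of Theorem \ref{theo:Sigma112IL} (``each $f_i$ occurs only as $f_i(\vec w_i)$ for some fixed tuple of variables $\vec w_i$''), following \cite{kontinenv09} and \cite{vaananen07}.
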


\begin{theo}[Dependence logic formulas and $\Sigma_1^1$ \cite{kontinenv09}] 
	\label{theo:DLform}
	For every dependence logic formula $\phi$ and every tuple of variables $\tuple x \supseteq \free(\phi)$ there exists a $\Sigma_1^1$ sentence $\Phi(R)$, where $R$ is a $|\tuple x|$-ary relation which occurs only negatively in $\Phi$, such that 
	\[
		M \models_X \phi \Leftrightarrow M \models \Phi(\rel(X))
	\]
	for all teams $X$ with domain $\tuple x$.\footnote{Here $\rel(X)$ is the relation corresponding to the team $X$, as in Definition \ref{defin:relteams}.} 

	Conversely, for all such $\Sigma_1^1$ sentences there exists a dependence logic formula $\phi$ such that the above holds with respect to all \emph{nonempty} teams $X$. 
\end{theo}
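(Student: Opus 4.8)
The two directions require different ideas, so I would treat them separately.

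For the first direction I would construct, by induction on $\phi$, a second-order formula $\tau_\phi(R)$ containing a fresh $|\tuple{x}|$-ary symbol $R$ that occurs only negatively, with $M \models_X \phi \Leftrightarrow M \models \tau_\phi(\rel(X))$ for every team $X$ of domain $\tuple{x}$. A first-order literal $\alpha$ goes to $\forall \tuple{x}(R(\tuple{x}) \rightarrow \alpha)$ and a dependence atom $=\!\!(t_1 \ldots t_n)$ goes to $\forall \tuple{x} \forall \tuple{x}'(R(\tuple{x}) \wedge R(\tuple{x}') \wedge \bigwedge_{i<n} t_i(\tuple{x}) = t_i(\tuple{x}') \rightarrow t_n(\tuple{x}) = t_n(\tuple{x}'))$, so that in each case $R$ appears only in an antecedent; conjunction goes to $\tau_\phi(R) \wedge \tau_\psi(R)$. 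The delicate clauses are $\vee$, $\exists$ and $\forall$, where the literal reading of the semantics would force a constraint such as $S(\tuple{x}) \rightarrow R(\tuple{x})$ and hence a positive occurrence of $R$. I would remove those constraints using the Downwards Closure Property (Theorem \ref{theo:DLdc}): disjunction goes to $\exists S \exists T(\forall \tuple{x}(R(\tuple{x}) \rightarrow S(\tuple{x}) \vee T(\tuple{x})) \wedge \tau_\phi(S) \wedge \tau_\psi(T))$, $\exists y\,\phi$ goes to $\exists S(\forall \tuple{x}(R(\tuple{x}) \rightarrow \exists y\, S(\tuple{x}, y)) \wedge \tau_\phi(S))$, and $\forall y\,\phi$ goes to $\exists S(\forall \tuple{x} \forall y(R(\tuple{x}) \rightarrow S(\tuple{x}, y)) \wedge \tau_\phi(S))$; in each case the backward implication is recovered by shrinking the guessed relation $S$ to its part lying over $\rel(X)$ and applying downward closure. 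All second-order quantifiers introduced are existential and $R$ stays negative, so $\tau_\phi(R)$ is the required $\Sigma_1^1$ sentence.

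For the converse I would run the same correspondence backwards. First I would observe that ``$R$ only negative'' makes $\Phi(R)$ antimonotone in $R$, the semantic counterpart of downward closure, and then bring $\Phi$ into Skolem normal form $\exists \tuple{f} \forall \tuple{y}\, \theta(R, \tuple{f}, \tuple{y})$ with $\theta$ quantifier-free, in negation normal form, and with $R$ confined to literals $\neg R(\tuple{t})$ (prenexing and Skolemisation do not change the polarity of $R$). The leading function quantifiers and the universal first-order quantifiers are encoded by the device behind Theorem \ref{theo:DLsent}: each $\exists f_i$ becomes an $\exists v_i$ guarded by a dependence atom $=\!\!(\tuple{y}, v_i)$, and $\forall \tuple{y}$ stays a universal quantifier. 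The guard $=\!\!(\tuple{y}, v_i)$ is essential: it decouples the guessed Skolem values from the free variables $\tuple{x}$, which keep ranging over all of $\rel(X)$ inside each $\tuple{y}$-slice of the quantified team. The new ingredient is the translation of $\neg R(\tuple{t})$, which I would render, relative to the ambient team, by the first-order disjunction $\bigvee_j t_j \neq x_j$ comparing $\tuple{t}$ coordinatewise with the free variables $\tuple{x}$; since $\tuple{x}$ runs through exactly $\rel(X)$ within a slice, demanding this of every assignment of the slice says precisely that $\tuple{t}$ is not a row of $\rel(X)$.

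The main obstacle is that this gadget is faithful only if each disjunctive choice inside $\theta$ is taken uniformly along the rows of a slice, whereas team-semantic disjunction may split the team row by row. Translating $\neg R(\tuple{t}_1) \vee \neg R(\tuple{t}_2)$ by two independent gadgets, for example, would allow different rows $\tuple{a} \in \rel(X)$ to satisfy different disjuncts and would therefore assert strictly less than ``$\tuple{t}_1 \notin \rel(X)$ or $\tuple{t}_2 \notin \rel(X)$''. I would handle this by translating $\theta$ recursively while enforcing the invariant that every split and every existential witness is governed by a variable that a dependence atom $=\!\!(\tuple{y}, \cdot)$ forces to depend on $\tuple{y}$ alone, so that all choices are constant along $\tuple{x}$; checking that this invariant survives each connective is the technical heart of the proof. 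The nonemptiness hypothesis is needed at the end because the empty team satisfies every dependence logic formula whereas $\Phi(\emptyset)$ may be false; for nonempty $X$ every slice has at least one row and the equivalence goes through.
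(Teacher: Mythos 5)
Your first direction is correct and is essentially the standard Kontinen--V\"a\"an\"anen translation: $R$ is kept negative by requiring only $R \subseteq S \cup T$ (resp.\ the corresponding projection conditions for the quantifier clauses), and the slack in the guessed relations is absorbed by the Downwards Closure Property in the backward implication. Note that the paper does not prove Theorem~\ref{theo:DLform} at all --- it cites \cite{kontinenv09} --- and the in-paper analogue of the argument you are attempting is the proof of Theorem~\ref{theo:Sigma112IL}. Your converse also correctly isolates the critical obstacle (team-semantic disjunction may split a $\tuple y$-slice row by row, so the $\lnot R$-gadgets must be governed by slice-constant, dependence-guarded variables), which is exactly the mechanism used there.

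However, your converse has a genuine gap: you guard every Skolem witness with $=\!\!(\tuple y, v_i)$, letting each $v_i$ depend on the \emph{whole} universal prefix, and you justify the guard solely as decoupling $v_i$ from $\tuple x$. This discards the argument patterns of the Skolem functions, and those patterns are precisely what separates $\Sigma_1^1$ from first order logic: by the Axiom of Choice, $\exists \tuple f \forall \tuple y \, \theta(\tuple f(\tuple y), \tuple y)$ with every $f_i$ applied to all of $\tuple y$ is equivalent to the first-order sentence $\forall \tuple y \exists \tuple z \, \theta(\tuple z, \tuple y)$, so your scheme can only capture (antimonotone) first-order properties of $\rel(X)$. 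Concretely, take $\Phi(R) = \exists f \forall y_1 \forall y_2 (\lnot R(y_1,y_2) \vee f(y_1) = y_2)$, which for $|\tuple x| = 2$ expresses exactly the dependence atom $=\!\!(x_1, x_2)$. Your translation produces, up to the split-control gadget, $\forall y_1 y_2 \exists v (=\!\!(y_1 y_2, v) \wedge ((y_1 \not= x_1 \vee y_2 \not= x_2) \vee v = y_2))$; since $v$ is allowed to depend on $y_2$, one may simply choose $v := y_2$ on every slice, so this formula is satisfied by \emph{every} team and the equivalence fails. The missing ingredient is the normal-form lemma of \cite{kontinenv09}: every such $\Phi(R)$ can be rewritten so that each $f_i$ occurs only as $f_i(\tuple w_i)$ for one \emph{fixed} tuple of variables $\tuple w_i \subseteq \tuple y$, after which the guard must be $=\!\!(\tuple w_i, v_i)$ rather than $=\!\!(\tuple y, v_i)$. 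This is exactly how the paper proceeds in Theorem~\ref{theo:Sigma112IL} (the conjuncts $\bigwedge_i =\!\!(\tuple w_i, z_i)$ there); obtaining and then exploiting this normal form is the part of the argument your sketch skips, and it is the technical heart of the converse direction.
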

\subsection{Independence logic} 
\label{subsect:indlog}
Independence logic \cite{gradel10} is a recently developed logic which substitutes the dependence atoms of dependence logic with \emph{independence atoms} $\indep{\tuple t_1}{\tuple t_2}{\tuple t_3}$, where $\tuple t_1 \ldots \tuple t_3$ are tuples of terms (not necessarily of the same length). 

The intuitive meaning of such an atom is that the values of the tuples $\tuple t_2$ and $\tuple t_3$ are informationally independent for any fixed value of $\tuple t_1$; or, in other words, that all information about the value of $\tuple t_3$ that can be possibly inferred from the values of $\tuple t_1$ and $\tuple t_2$ can be already inferred from the value of $\tuple t_1$ alone. \\

More formally, the definition of the team semantics for the independence atom is as follows:
\begin{defin}[Independence atoms]
	\label{def:indep}
	Let $M$ be a first order model, let $X$ be a team over it and let $\tuple t_1, \tuple t_2$ and $\tuple t_3$ be three finite tuples of terms (not necessarily of the same length) over the signature of $M$ and with variables in $\dom(X)$. Then 
\begin{description}
	\item[TS-indep:] $M \models_X \indep{\tuple t_1}{\tuple t_2}{\tuple t_3}$ if and only if for all $s, s' \in X$ with $\tuple t_1\langle s\rangle = \tuple t_1\langle s\rangle$ there exists a $s'' \in X$ such that $\tuple t_1\langle s''\rangle \tuple t_2\langle s''\rangle = \tuple t_1 \langle s\rangle \tuple t_2\langle s\rangle$  and $\tuple t_1 \langle s''\rangle \tuple t_3\langle s''\rangle = \tuple t_1\langle s'\rangle \tuple t_3\langle s'\rangle$.
\end{description}
\end{defin}

We refer to \cite{gradel10} for a discussion of this interesting class of atomic formulas and of the resulting logic. Here we only mention a few results, found in that paper, which will be useful for the rest of this work:\footnote{Another interesting result about independence logic, pointed out by Fredrik Engstr\"om in \cite{engstrom10}, is that the semantic rule for independence atoms corresponds to that of \emph{embedded multivalued dependencies}, in the same sense in which the one for dependence atoms corresponds to \emph{functional} ones.}
\begin{theo}
	\label{theo:DL2IL}
	Dependence atoms are expressible in terms of independence atoms: more precisely, for all suitable models $M$, teams $X$ and terms $t_1 \ldots t_n$ 
	\[
		M \models_X =\!\!(t_1 \ldots t_n) \Leftrightarrow M \models_X \indep{t_1 \ldots t_{n-1}}{t_n}{t_n}.
	\]
\end{theo}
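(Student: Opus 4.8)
The plan is to establish both directions of the biconditional by unfolding the semantic rules \textbf{TS-dep} and \textbf{TS-indep} under the instantiation $\tuple t_1 = t_1 \ldots t_{n-1}$, $\tuple t_2 = t_n$ and $\tuple t_3 = t_n$. The guiding observation is that, because the second and third tuples of the independence atom here coincide (both are $t_n$), any witness $s''$ supplied by \textbf{TS-indep} is forced to carry the value $t_n\langle s\rangle$ \emph{and} the value $t_n\langle s'\rangle$ at once; this collapse is exactly what turns the existential condition of independence into the universal equality demanded by functional dependence.

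For the forward implication I would assume $M \models_X =\!\!(t_1 \ldots t_n)$ and fix arbitrary $s, s' \in X$ that agree on $\tuple t_1$, i.e. with $t_i\langle s\rangle = t_i\langle s'\rangle$ for $i = 1, \ldots, n-1$. Rule \textbf{TS-dep} then gives $t_n\langle s\rangle = t_n\langle s'\rangle$, and I would exhibit $s'' := s$ as the witness required by \textbf{TS-indep}: the equation $\tuple t_1\langle s''\rangle t_n\langle s''\rangle = \tuple t_1\langle s\rangle t_n\langle s\rangle$ holds trivially, while $\tuple t_1\langle s''\rangle t_n\langle s''\rangle = \tuple t_1\langle s'\rangle t_n\langle s'\rangle$ follows by combining the hypothesis $\tuple t_1\langle s\rangle = \tuple t_1\langle s'\rangle$ with the just-derived equality of $t_n$-values.

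For the converse I would assume $M \models_X \indep{t_1 \ldots t_{n-1}}{t_n}{t_n}$ and again fix $s, s' \in X$ agreeing on $t_1, \ldots, t_{n-1}$, so that $\tuple t_1\langle s\rangle = \tuple t_1\langle s'\rangle$. Applying \textbf{TS-indep} produces an $s'' \in X$ with $\tuple t_1\langle s''\rangle t_n\langle s''\rangle = \tuple t_1\langle s\rangle t_n\langle s\rangle$ and $\tuple t_1\langle s''\rangle t_n\langle s''\rangle = \tuple t_1\langle s'\rangle t_n\langle s'\rangle$; reading off the last coordinate of each equation yields $t_n\langle s\rangle = t_n\langle s''\rangle = t_n\langle s'\rangle$, which is precisely the conclusion of \textbf{TS-dep}.

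I do not expect any genuine obstacle: the statement is essentially a definitional unfolding, and each direction reduces to transitivity of equality once the correct witness is chosen (forward) or the correct component is extracted (converse). The only point meriting attention is the coincidence $\tuple t_2 = \tuple t_3 = t_n$ noted above — it is this, rather than any nontrivial argument about informational independence, that makes the equivalence go through.
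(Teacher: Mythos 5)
Your proof is correct. Note that the paper states Theorem \ref{theo:DL2IL} without giving any proof at all---it is quoted as a known result from Gr\"adel and V\"a\"an\"anen's paper on independence logic---so there is no in-paper argument to compare against; your definitional unfolding (taking $s'' := s$ as the witness in the forward direction, and reading off the $t_n$-coordinate of the two equations furnished by \textbf{TS-indep} in the converse) is exactly the canonical argument, and both directions check out.
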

\begin{theo}
	\label{theo:ILsent}
	Independence logic is equivalent to $\Sigma_1^1$ (and therefore, by Theorem \ref{theo:DLsent}, to dependence logic) over sentences: in other words, for every sentence $\phi$ of independence logic there exists a sentence $\Phi$ of existential second order logic such that 
	\[
	M \models_{\{\emptyset\}} \phi \Leftrightarrow M \models \Phi.
	\]
	and for every such $\Phi$ there exists a $\phi$ such that the above holds. 
\end{theo}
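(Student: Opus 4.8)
The plan is to prove the two inclusions separately. The containment of $\Sigma_1^1$ in independence logic is immediate from the results already recalled, whereas the converse --- that every independence logic sentence is equivalent to some $\Sigma_1^1$ sentence --- is the substantive half and calls for a compositional translation.

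For the first direction, suppose $\Phi$ is a $\Sigma_1^1$ sentence. By the second half of Theorem \ref{theo:DLsent} there is a dependence logic sentence $\phi_D$ with $M \models_{\{\emptyset\}} \phi_D \Leftrightarrow M \models \Phi$ for every $M$. I would then turn $\phi_D$ into an independence logic sentence $\phi$ by replacing each dependence atom $=\!\!(t_1 \ldots t_n)$ occurring in it with the independence atom $\indep{t_1 \ldots t_{n-1}}{t_n}{t_n}$, leaving all first-order atoms, connectives, and quantifiers untouched. Since the semantic rules for the connectives and quantifiers coincide in dependence and independence logic, a straightforward induction on the structure of $\phi_D$, whose base case is exactly the atom equivalence of Theorem \ref{theo:DL2IL}, shows that $M \models_X \phi_D \Leftrightarrow M \models_X \phi$ for every team $X$; in particular this holds for $X = \{\emptyset\}$, so $\phi$ is the required sentence.

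For the converse, I would construct, by induction on an independence logic formula $\psi$ with free variables among a fixed tuple $\tuple x$, a formula $\tau_\psi(R)$ in which $R$ is a fresh $|\tuple x|$-ary relation symbol, such that $M \models_X \psi \Leftrightarrow M \models \tau_\psi(\rel(X))$ for every team $X$ with domain $\tuple x$, maintaining as an invariant that $\tau_\psi(R)$ is $\Sigma_1^1$. The base cases are first-order: a first-order literal translates to the assertion that every tuple in $R$ satisfies it, and an independence atom $\indep{\tuple t_1}{\tuple t_2}{\tuple t_3}$ translates, by a direct reading of TS-indep, into
\[
	\forall \tuple u \tuple u' \big( (R\tuple u \wedge R\tuple u' \wedge \tuple t_1(\tuple u)=\tuple t_1(\tuple u')) \rightarrow \exists \tuple u'' (R\tuple u'' \wedge \tuple t_1(\tuple u'')=\tuple t_1(\tuple u) \wedge \tuple t_2(\tuple u'')=\tuple t_2(\tuple u) \wedge \tuple t_3(\tuple u'')=\tuple t_3(\tuple u')) \big),
\]
where $\tuple t_i(\tuple u)$ abbreviates the evaluation of the terms on the assignment coded by $\tuple u$. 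The inductive cases follow the team-semantic rules: conjunction maps to $\tau_{\theta \wedge \eta}(R) = \tau_\theta(R) \wedge \tau_\eta(R)$; disjunction introduces two existentially quantified relations $S, T$, asserting that $R$ is their union together with $\tau_\theta(S) \wedge \tau_\eta(T)$; the universal quantifier $\forall y\, \theta$ applies $\tau_\theta$ to the relation coding the team supplemented with all values of $y$; and the existential quantifier $\exists y\, \theta$ existentially quantifies a relation coding the choice of witnesses and applies $\tau_\theta$. At the top level $\psi$ is a sentence, so $\tuple x$ is empty and $R$ is nullary, and substituting the value of $\rel(\{\emptyset\})$ yields a $\Sigma_1^1$ sentence $\Phi$ with $M \models_{\{\emptyset\}} \psi \Leftrightarrow M \models \Phi$.

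The main obstacle lies in the inductive step: I must check that each clause introduces only \emph{existential} second-order quantifiers, so that the invariant ``$\tau_\psi(R)$ is $\Sigma_1^1$'' is preserved. The argument could fail here if the atomic conditions were not already first-order, or if the clauses for $\vee$, $\exists$, and $\forall$ forced a universal quantification over the splittings or witness relations; a careful inspection of TS-indep and of the semantic rules shows that this is not so --- every new second-order quantifier is existential, and the first-order matter can be absorbed into the existential prefix in the usual way --- so the translation stays within $\Sigma_1^1$. A secondary point of care is fixing which variant of the semantic rules for $\vee$ and $\exists$ is adopted; either choice yields a $\Sigma_1^1$ translation, the only difference being whether the witness for $\exists y$ is coded as a function or as a relation.
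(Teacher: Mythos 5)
Your proof is correct and follows essentially the standard route: the paper itself does not prove Theorem \ref{theo:ILsent} but imports it from Gr\"adel and V\"a\"an\"anen (2010), whose argument is exactly your two-step scheme --- for one direction a compositional $\Sigma_1^1$ translation by induction on the formula (the same ``unproblematic induction'' the paper carries out for the richer I/E logic in Theorem \ref{theo:IE2Sigma11}), and for the other the composition of Theorem \ref{theo:DLsent} with the atom substitution of Theorem \ref{theo:DL2IL}. Both of your directions are sound, including the closure observations needed to keep the translation inside $\Sigma_1^1$, so nothing further is required.
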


There is no analogue of Theorem \ref{theo:DLdc} for independence logic, however, as the classes of teams corresponding to independence atoms are not necessarily downwards closed: for example, according Definition \ref{def:indep} the formula $\indep{\emptyset}{x}{y}$ holds in the team 
\[
	\{(x:0, y:0), (x:0, y:1), (x:1,y:0), (x:1, y:1)\}
\]
but not in its subteam $\{(x:0,y:0), (x:1,y:1)\}$.\\

The problem of of finding a characterization similar to that of Theorem \ref{theo:DLform} for the classes of teams definable by formulas of independence logic was left open by Gr\"adel and V\"a\"an\"anen, who concluded their paper by stating that (\cite{gradel10})
\begin{quote}
\emph{The main open question raised by the above discussion is the following, formulated for finite structures:}\\

\textbf{Open Problem:} \emph{Characterize the NP properties of teams that correspond to formulas of independence logic.}
\end{quote}

In this paper, an answer to this question will be given, as a corollary of an analogous result for a new logic of imperfect information.
\section{Team semantics}
In this section, we will introduce some of the main concepts that we will need for the rest of this work and then we will test them on a relatively simple case. Subsection \ref{subsect:laxstrict} contains the basic definitions of team semantics, following for the most part the treatment of \cite{vaananen07}; and furthermore, in this subsection we introduce two variant rules for disjunction and existential quantification which, as we will later see, will be of significant relevance. Then, in Subsection \ref{subsect:constancy}, we will begin our investigations by examining \emph{constancy logic}, that is, the fragment of dependence logic obtained by adding constancy atoms to the language of first order logic. The main result of that subsection will be a proof that constancy logic is expressively equivalent to first order logic over sentences, and, hence, that it is strictly less expressive than the full dependence logic. This particular consequence is a special case of the far-reaching \emph{hierarchy theorem} of \cite{durand11}, which fully characterizes the expressive powers of certain fragments of dependence logic. 
\subsection{First order (team) logic, in two flavors}
\label{subsect:laxstrict}
In this subsection, we will present and briefly discuss the team semantics for first order logic, laying the groundwork for reasoning about its extensions while avoiding, as far as we are able to do so, all forms of semantical ambiguity. \\

As we will see, some special care is required here, since certain rules which are equivalent with respect to dependence logic proper will not be so with respect to these new logics. As it often is the case for logics of imperfect information, the game theoretic approach to semantics (which we will discuss in Section \ref{sect:gamesem}) will be of support and clarification for our intuitions concerning the intended interpretations of operators.\\

But let us begin by recalling some basic definitions from \cite{vaananen07}:
\begin{defin}[Team]
	Let $M$ be a first order model, and let $\tuple v$ be a tuple of variables.\footnote{Or, equivalently, a \emph{set} of variables; but having a fixed ordering of the variables as part of the definition of team will simplify the definition of the correspondence between teams and relations. With an abuse of notation, we will identify this tuple of variables with the underlying set whenever it is expedient to do so.} Then a \emph{team} $X$ for $M$ with \emph{domain} $\tuple v$ is simply a set of assignments with domain $\tuple v$ over $M$.
\end{defin}
\begin{defin}[From teams to relations]
	\label{defin:relteams}
	Let $M$ be a first order model, $X$ be a team for $M$ with domain $\tuple v$, and let $\tuple t = t_1 \ldots t_k$ be a tuple of terms with variables in $\tuple v$. Then we write $X(\tuple t)$ for the relation
	\[
		X(\tuple t) = \{(t_1\langle s\rangle \ldots t_k\langle s \rangle) : s \in X\}.
	\]
	Furthermore, if $\tuple w$ is contained in $\tuple v$ we will write $\rel_{\tuple w}(X)$ for $X(\tuple w)$; and, finally, if $\dom(X) = \tuple v$ we will write $\rel(X)$ for $\rel_{\tuple v}(X)$. 
\end{defin}
\begin{defin}[Team restrictions]
	Let $X$ be any team in any model, and let $V$ be a set of variables contained in $\dom(X)$. Then 
	\[
		X_{\upharpoonright V} = \{s_{\upharpoonright V} : s \in X\}
	\]
	where $s_{\upharpoonright V}$ is the restriction of $s$ to $V$, that is, the only assignment $s'$ with domain $V$ such that $s'(v) = s(v)$ for all $v \in V$.
\end{defin}
The team semantics for the first order fragment of dependence logic is then defined as follows:
\begin{defin}[Team semantics for first order logic (\cite{hodges97}, \cite{vaananen07})]
	\label{def:team_fol}
	Let $M$ be a first order model, let $\phi$ be a first order formula in negation normal form\footnote{Since the negation is not a semantic operation in dependence logic (\cite{burgess03}, \cite{kontinenv10}), it is useful to assume that all formulas are in negation normal form. It is of course possible to adapt these definitions to formulas not in negation normal form, but in order to do so for the cases of dependence or independence logic it would be necessary to define two distinct relationships $\models^+$ and $\models^-$, as in \cite{vaananen07}. Since, for the purposes of this work, this would offer no significant advantage and would complicate the definitions, it was chosen to avoid the issue by requiring all formulas to be in negation normal form instead.} and let $X$ be a team over $M$ with domain $\tuple v \supseteq \free(\phi)$. Then 
	\begin{description}
			\item[TS-atom:] If $\phi$ is a first order literal, $M \models_X \phi$ if and only if, for all assignments $s \in X$, $M \models_s \phi$ in the usual first order sense; 
			\item[TS-$\vee_L$:] If $\phi$ is $\psi \vee \theta$, $M \models_X \phi$ if and only if there exist two teams $Y$ and $Z$ such that $X = Y \cup Z$, $M \models_Y \psi$ and $M \models_Z \theta$; 
			\item[TS-$\wedge$:] If $\phi$ is $\psi \wedge \theta$, $M \models_X \phi$ if and only if $M \models_X \psi$ and $M \models_X \theta$; 
			\item[TS-$\exists_S$:] If $\phi$ is $\exists x \psi$, $M \models_X \phi$ if and only if there exists a function $F: X \rightarrow \dom(M)$ such that $M \models_{X[F/x]}\psi$, where 
				\[
					X[F/x] = \{s[F(s)/x] : s \in X\};\footnote{Sometimes, we will write $X[F_1 F_2 \ldots F_n / x_1 \ldots x_n]$, or even $X[\tuple F/\tuple x]$, as a shorthand for $X[F_1/x_1][F_2/x_2]\ldots[F_n/x_n]$.}
				\]
			\item[TS-$\forall$:] If $\phi$ is $\forall x \psi$, $M \models_X \phi$ if and only if $M \models_{X[M/x]} \psi$, where 
				\[
					X[M/x] = \{s[m/x] : s \in X\}.\footnote{Sometimes, we will write $X[M/x_1 x_2 \ldots x_n]$, or even $X[M/\tuple x]$, as a shorthand for $X[M/x_1][M/x_2]\ldots[M/x_n]$.}
				\]
	\end{description}
\end{defin}
Over singleton teams, this semantics coincides with the usual one for first order logic: 
\begin{propo}[\cite{vaananen07}]
	\label{propo:FO_Team2Tarski}
	Let $M$ be a first order model, let $\phi$ be a first order formula in negation normal form over the signature of $M$, and let $s$ be an assignment with $\dom(s) \supseteq \free(\phi)$. Then $M \models_{\{s\}} \phi$ if and only if $M \models_{s} \phi$ with respect to the usual Tarski semantics for first order logic. 
\end{propo}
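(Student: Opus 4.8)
The plan is to prove Proposition \ref{propo:FO_Team2Tarski} by structural induction on the first order formula $\phi$ in negation normal form, showing at each step that the team-semantic satisfaction relation $\models_{\{s\}}$ over the singleton team $\{s\}$ coincides with the Tarskian relation $\models_s$. The base case is handled directly by the rule \textbf{TS-atom}: when $\phi$ is a literal, $M \models_{\{s\}} \phi$ holds if and only if $M \models_{s'} \phi$ for every $s'$ in the singleton team, which here is just the single assignment $s$, so this is by definition the Tarskian condition $M \models_s \phi$.

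For the inductive cases I would treat conjunction, disjunction, and the two quantifiers in turn. Conjunction is immediate from \textbf{TS-$\wedge$} together with the induction hypothesis, since $M \models_{\{s\}} \psi \wedge \theta$ iff $M \models_{\{s\}} \psi$ and $M \models_{\{s\}} \theta$, which matches the Tarskian clause. The universal quantifier case uses \textbf{TS-$\forall$}: here $\{s\}[M/x] = \{s[m/x] : m \in \dom(M)\}$, so $M \models_{\{s\}} \forall x \psi$ iff $M \models_{\{s[m/x] : m \in \dom(M)\}} \psi$; I would then invoke the fact that satisfaction of $\psi$ by a team is determined assignment-by-assignment only once I know $\psi$ is first order, but more carefully, I expect to reduce this to the singleton case for each $m$, which is where a small locality-style observation is needed. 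The cleanest route is to first establish a lemma (or use the induction hypothesis componentwise) that, for first order $\psi$, a team $Y$ satisfies $\psi$ iff every singleton $\{s'\}$ with $s' \in Y$ does; then $M \models_{\{s\}[M/x]} \psi$ becomes ``$M \models_{s[m/x]} \psi$ for all $m$'', exactly the Tarskian clause for $\forall x$.

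The existential and disjunction cases are the ones requiring the most care, and I expect the disjunction rule \textbf{TS-$\vee_L$} to be the main obstacle. The subtlety is that $M \models_{\{s\}} \psi \vee \theta$ requires a splitting $\{s\} = Y \cup Z$ with $M \models_Y \psi$ and $M \models_Z \theta$; since the team is a singleton, the possible splittings are limited (one of $Y, Z$ must contain $s$ and the other may be empty), but I must handle the empty team, which satisfies every formula vacuously, and argue that this forces $M \models_{\{s\}} \psi$ or $M \models_{\{s\}} \theta$, matching Tarskian disjunction. This is where the argument could go wrong if one forgets that the empty team trivially satisfies any formula; the key step is to observe that because $\{s\}$ has exactly one element, at least one of $Y, Z$ equals $\{s\}$, and then apply the induction hypothesis. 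The existential case \textbf{TS-$\exists_S$} is analogous but easier: a function $F : \{s\} \to \dom(M)$ is just a choice of a single witness $m = F(s)$, so $\{s\}[F/x] = \{s[m/x]\}$ is again a singleton, and the induction hypothesis converts $M \models_{\{s[m/x]\}} \psi$ into $M \models_{s[m/x]} \psi$, yielding the Tarskian existential clause.
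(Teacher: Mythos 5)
Your proof is correct, and you should know that the paper itself contains no proof of this proposition: it is imported directly from \cite{vaananen07}, so the only comparison available is with the standard argument from that source, which is essentially what you give. The one point to make structurally precise is the issue you yourself flag in the universal-quantifier case: since $\{s\}[M/x]$ is no longer a singleton, an induction hypothesis stated only for singleton teams is genuinely too weak to handle \textbf{TS-$\forall$} (it would also be too weak for the lax existential \textbf{TS-$\exists_L$}, though not for \textbf{TS-$\exists_S$}, which is the rule in force in Definition \ref{def:team_fol}). The clean fix is not to ``first establish a lemma'' that teams satisfy first order formulas assignment-by-assignment --- that lemma requires the very same induction you are running --- but to strengthen the statement proved by induction to: for every team $Y$ with $\dom(Y) \supseteq \free(\psi)$, $M \models_Y \psi$ if and only if $M \models_{s'} \psi$ in the Tarskian sense for every $s' \in Y$. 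With this hypothesis all your cases go through essentially verbatim: literals by \textbf{TS-atom}; conjunction trivially; disjunction by splitting $Y$ into $Y_1 = \{s' \in Y : M \models_{s'} \psi\}$ and $Y_2 = \{s' \in Y : M \models_{s'} \theta\}$, which subsumes your empty-team observation (the empty team satisfies everything vacuously under the strengthened hypothesis); the strict existential by choosing a Tarskian witness pointwise; and the universal directly, since $Y[M/x]$ consists exactly of the assignments $s'[m/x]$. This single induction then delivers Proposition \ref{propo:FO_Team2Tarski} (take $Y = \{s\}$) and Proposition \ref{propo:FOflat} simultaneously, which is how the cited source in effect proceeds; your version, read literally as an induction on the singleton statement alone, would stall at \textbf{TS-$\forall$}, but you identified the repair correctly.
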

Furthermore, as the following proposition illustrates, the team semantics of first order logic is compatible with the intuition, discussed before, that teams represent states of knowledge: 
\begin{propo}[\cite{vaananen07}]
	\label{propo:FOflat}
	Let $M$ be a first order model, let $\phi$ be a first order formula in negation normal form over the signature of $M$, and let $X$ be a team with $\dom(X) \supseteq \free(\phi)$. Then $M \models_X \phi$ if and only if, for all assignments $s \in X$, $M \models_{\{s\}} \phi$.\footnote{In other words, first order formulas are \emph{flat} in the sense of \cite{vaananen07}.}  
\end{propo}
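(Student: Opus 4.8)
The plan is to prove the statement by structural induction on $\phi$, exploiting the fact that each semantic rule of Definition \ref{def:team_fol} interacts cleanly with the decomposition of a team into its singletons. Call $\phi$ \emph{flat} if $M \models_X \phi \Leftrightarrow (\forall s \in X)\, M \models_{\{s\}} \phi$; the goal is to show that every first order formula in negation normal form is flat. The base case is immediate: if $\phi$ is a literal, then by \textbf{TS-atom} $M \models_X \phi$ holds iff $M \models_s \phi$ in the ordinary Tarskian sense for every $s \in X$, which by Proposition \ref{propo:FO_Team2Tarski} is equivalent to $M \models_{\{s\}} \phi$ for every $s \in X$. Before the connective cases I would also record the auxiliary fact that the empty team satisfies every first order formula, i.e. $M \models_\emptyset \phi$ for all $\phi$; this follows by a parallel induction (take the empty decomposition for $\vee$, the empty function for $\exists$, and use $\emptyset[M/x] = \emptyset$ for $\forall$) and will be needed in the disjunction step.

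Conjunction is routine: by \textbf{TS-$\wedge$} and the induction hypothesis, $M \models_X \psi \wedge \theta$ iff $M \models_{\{s\}} \psi$ and $M \models_{\{s\}} \theta$ for every $s \in X$, which by \textbf{TS-$\wedge$} applied to singletons is exactly flatness for $\psi \wedge \theta$. Disjunction is the characteristic case. For the direction from singletons to $X$, assume $M \models_{\{s\}} \psi \vee \theta$ for every $s \in X$; unwinding \textbf{TS-$\vee_L$} over a singleton shows each such $s$ satisfies $M \models_{\{s\}} \psi$ or $M \models_{\{s\}} \theta$, so putting $Y = \{s \in X : M \models_{\{s\}} \psi\}$ and $Z = \{s \in X : M \models_{\{s\}} \theta\}$ gives $X = Y \cup Z$ with $M \models_Y \psi$ and $M \models_Z \theta$ by the induction hypothesis, whence $M \models_X \psi \vee \theta$. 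Conversely, a witnessing decomposition $X = Y \cup Z$ gives, via the induction hypothesis, $M \models_{\{s\}} \psi$ for $s \in Y$ and $M \models_{\{s\}} \theta$ for $s \in Z$; combining this with the empty-team fact (splitting $\{s\}$ as $\{s\} \cup \emptyset$ or $\emptyset \cup \{s\}$) yields $M \models_{\{s\}} \psi \vee \theta$ for every $s \in X$.

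The quantifier cases rest on the identity $X[M/x] = \bigcup_{s \in X} \{s\}[M/x]$, together with the analogous remark that every element of $X[F/x]$ is of the form $s[F(s)/x]$ for some $s \in X$. For $\forall$, applying the induction hypothesis to the team $X[M/x]$ reduces $M \models_X \forall x \psi$ to the condition $M \models_{\{s[m/x]\}} \psi$ for all $s \in X$ and all $m \in \dom(M)$, which is precisely $M \models_{\{s\}} \forall x \psi$ for all $s \in X$. For $\exists$, a witnessing $F : X \to \dom(M)$ yields, for each $s$, the restricted witness $F_{\upharpoonright \{s\}}$ showing $M \models_{\{s\}} \exists x \psi$; conversely, gluing singleton witnesses $F_s$ into one map $F$ with $F(s) = F_s(s)$ and applying the induction hypothesis to $X[F/x]$ recovers $M \models_X \exists x \psi$.

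I expect the main obstacle to be the disjunction case: unlike the other connectives, \textbf{TS-$\vee_L$} quantifies existentially over decompositions of the team, and the correct bookkeeping -- splitting $X$ according to which disjunct each singleton satisfies, and invoking the empty-team observation to pass back to singletons -- is where the argument has genuine content. The quantifier steps, by contrast, are essentially a verification that team modification distributes over unions of singletons, and the remaining cases are immediate.
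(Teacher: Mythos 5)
Your proof is correct, and there is nothing to compare it against within the paper itself: Proposition \ref{propo:FOflat} is stated with a citation to \cite{vaananen07} and no proof is given, the standard argument being exactly the structural induction you carry out. You correctly identify and handle the two points of real content --- the empty-team lemma required to pass singletons back through the lax disjunction rule \textbf{TS-$\vee_L$}, and the gluing of singleton witnesses into a single function for \textbf{TS-$\exists_S$} (an implicit use of the Axiom of Choice, consistent with the paper's own practice, e.g.\ in Proposition \ref{propo:laxeqstrict}).
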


On the other hand, these two proposition also show that, for first order logic, all the above machinery is quite unnecessary. We have no need of carrying around such complex objects as teams, since we can consider any assignment in a team individually!\\

Things, however, change if we add dependence atoms $=\!\!(t_1 \ldots t_n)$ to our language, with the semantics of rule \textbf{TS-dep} (Definition \ref{def:dep} here). In the resulting formalism, which is precisely \emph{dependence logic} as defined in \cite{vaananen07}, not all satisfaction conditions over teams can be reduced to satisfaction conditions over assignments: for example, a ``constancy atom'' $=\!\!(x)$ holds in a team $X$ if and only if $s(x) = s'(x)$ for all $s, s' \in X$, and verifying this condition clearly requires to check \emph{pairs} of assignments at least!\footnote{That is, all constancy atoms - and, more in general, all dependence atoms - are $2$\emph{-coherent} but not $1$\emph{-coherent} in the sense of \cite{kontinen_ja10}.}\\

When studying variants of dependence logic, similarly, it is necessary to keep in mind that semantic rules which are equivalent with respect to dependence logic proper may not be equivalent with respect to these new formalisms. In particular, two alternative definitions of disjunction and existential quantification exist which are of special interest for this work's purposes:\footnote{The rule \textbf{TS}-$\exists_L$ is also discussed in \cite{engstrom10}, in which it is shown that it arises naturally from treating the existential quantifier as a \emph{generalized quantifier} (\cite{mostowski57}, \cite{lindstrom66}) for dependence logic.}
\begin{defin}[Alternative rules for disjunctions and existentials]
\label{def:altsem}
	Let $M$, $X$, $\phi$, $\psi$ and $\theta$ be as usual. Then 
\begin{description}
	\item[TS-$\vee_S$:] If $\phi$ is $\psi \vee \theta$, $M \models_X \phi$ if and only if there exist two teams $Y$ and $Z$ such that $X = Y \cup Z$, $Y \cap Z = \emptyset$, $M \models_Y \psi$ and $M \models_Z \theta$; 
	\item[TS-$\exists_L$:] If $\phi$ is $\exists x \psi$, $M \models_X \phi$ if and only if there exists a function $H: X \rightarrow \part(\dom(M))\backslash \emptyset$ such that $M \models_{X[H/x]}\psi$, where 
				\[
					X[H/x] = \{s[m/x] : s \in X, m \in H(s)\}.
				\]
\end{description}
\end{defin}
The subscripts of $\cdot_S$ and $\cdot_L$ of these rules and of the corresponding ones of Definition \ref{def:team_fol} allow us to discriminate between the \emph{lax} operators $\vee_L$ and $\exists_L$ and the \emph{strict} ones $\vee_S$ and $\exists_S$. This distinction will be formally justified in Section \ref{sect:gamesem}, and in particular by Theorems \ref{theo:game_team_lax} and \ref{theo:game_team_strict}; but even at a glance, this grouping of the rules is justified by the fact that \textbf{TS-$\vee_S$} and \textbf{TS-$\exists_S$} appear to be stronger conditions than \textbf{TS-$\vee_L$} and \textbf{TS-$\exists_L$}. We can then define two alternative semantics for first order logic (and for its extensions, of course) as follows:
\begin{defin}[Lax semantics]
	\label{def:team_fol_lax}
	The relation $M \models_X^L \phi$, where $M$ ranges over all first order models, $X$ ranges over all teams and $\phi$ ranges over all formulas with free variables in $\dom(X)$, is defined as the relation $M \models_X \phi$ of Definition \ref{def:team_fol} (with additional rules for further atomic formulas as required), but substituting Rule \textbf{TS-$\exists_S$} with Rule \textbf{TS-$\exists_L$}.
\end{defin}
\begin{defin}[Strict semantics]
	\label{def:team_fol_strict}
	The relation $M \models_X^S \phi$, where $M$ ranges over all first order models, $X$ ranges over all teams and $\phi$ ranges over all formulas with free variables in $\dom(X)$, is defined as the relation $M \models_X \phi$ of Definition \ref{def:team_fol} (with additional rules for further atomic formulas as required), but substituting Rule \textbf{TS-$\vee_L$} with Rule \textbf{TS-$\vee_S$}.
\end{defin}

For the cases of first order and dependence logic, the lax and strict semantics are equivalent:
\begin{propo}
	\label{propo:laxeqstrict}
	Let $\phi$ be any formula of dependence logic. Then
	\[
		M \models_X^S \phi \Leftrightarrow M \models^L_X \phi
	\]
	for all suitable models $M$ and teams $\phi$.
\end{propo}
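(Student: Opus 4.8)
The plan is to argue by structural induction on $\phi$, proving the two implications separately. For the base cases---first order literals and dependence atoms---the rules \textbf{TS-atom} and \textbf{TS-dep} are shared by both semantics, so the equivalence is immediate; and conjunction is equally routine, since \textbf{TS-$\wedge$} is common to both relations and the claim follows from the induction hypothesis applied to $\psi$ and $\theta$ on the same team $X$. The only cases requiring genuine work are therefore disjunction and existential quantification, which are exactly the connectives whose rules differ between $\models^L$ and $\models^S$.

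The direction $M \models^S_X \phi \Rightarrow M \models^L_X \phi$ is the easy one, because every strict witness is already a lax witness. If $M \models^S_X \psi \vee \theta$ via a decomposition $X = Y \cup Z$ with $Y \cap Z = \emptyset$, the same decomposition (simply forgetting disjointness) witnesses $M \models^L_X \psi \vee \theta$ once the induction hypothesis is applied to $\psi$ on $Y$ and to $\theta$ on $Z$. Likewise, if $M \models^S_X \exists x\, \psi$ via a function $F : X \to \dom(M)$, then setting $H(s) = \{F(s)\}$ produces a map $H : X \to \part(\dom(M)) \setminus \emptyset$ with $X[H/x] = X[F/x]$, so the induction hypothesis on $\psi$ yields $M \models^L_X \exists x\, \psi$.

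The converse direction $M \models^L_X \phi \Rightarrow M \models^S_X \phi$ is where the Downwards Closure Property (Theorem \ref{theo:DLdc}) does the real work, since we must turn overlapping unions into disjoint ones and multivalued existential witnesses into single-valued ones. Suppose $M \models^L_X \psi \vee \theta$ via a (possibly overlapping) $X = Y \cup Z$. Put $Z' = Z \setminus Y$, so that $Y \cap Z' = \emptyset$ and $Y \cup Z' = X$; since $Z' \subseteq Z$, downward closure gives satisfaction of $\theta$ on $Z'$, and the induction hypothesis converts satisfaction of $\psi$ on $Y$ and of $\theta$ on $Z'$ into strict satisfaction, whence $M \models^S_X \psi \vee \theta$. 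For the existential, given a lax witness $H : X \to \part(\dom(M)) \setminus \emptyset$ with $M \models^L_{X[H/x]} \psi$, choose for each $s \in X$ some $F(s) \in H(s)$ (possible as each $H(s)$ is nonempty); then $X[F/x] \subseteq X[H/x]$, so downward closure yields satisfaction of $\psi$ on $X[F/x]$, which the induction hypothesis upgrades to $M \models^S_{X[F/x]} \psi$, giving $M \models^S_X \exists x\, \psi$.

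The main obstacle to watch is the precise semantics at which downward closure is invoked: Theorem \ref{theo:DLdc} is phrased for a single reference relation, whereas the trimming steps above need closure for whichever of the two relations the induction hypothesis is currently being read in. I expect the clean way to avoid any appeal to the conclusion being proved is to strengthen the inductive statement so that it simultaneously asserts the equivalence $M \models^L_X \phi \Leftrightarrow M \models^S_X \phi$ \emph{and} downward closure for the lax relation $\models^L$. The downward-closure clause is then available on the relevant subformulas exactly when the disjunction and existential cases call for it, and the equivalence clause lets us pass freely between $\models^L$ and $\models^S$ on the trimmed teams; verifying the extra clause through atoms, conjunction, disjunction and the quantifiers is straightforward, so with this combined induction in place every step is justified internally.
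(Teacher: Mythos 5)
Your proof is correct and follows essentially the same route as the paper's: a structural induction in which the strict-to-lax direction is witnessed trivially (singleton-valued $H$, forgetting disjointness), and the lax-to-strict direction uses downward closure, plus the Axiom of Choice for existentials, to trim overlapping unions into disjoint ones and multivalued witnesses into functions. Your one refinement---folding downward closure for $\models^L$ into the inductive statement---is a sound way to tighten a point the paper treats casually, since Theorem \ref{theo:DLdc} is officially stated for the mixed semantics of Definition \ref{def:team_fol} (lax disjunction, strict existential) rather than for $\models^L$ itself, so the paper's direct appeal to it on $\models^L$ strictly speaking needs exactly the kind of auxiliary argument you supply.
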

\begin{proof}
This is easily verified by structural induction over $\phi$, using the downwards closure property (Theorem \ref{theo:DLdc}) to take care of disjunctions and existentials (and, moreover, applying the Axiom of Choice for the case of existentials). We verify the case corresponding to existential quantifications, as an example: the one corresponding to disjunctions is similar but simpler, and the the others are trivial. \\

Suppose that $M \models_X^S \exists x \phi$: then, by rule \textbf{TS-$\exists_S$}, there exists a function $F: X \rightarrow \dom(M)$ such that $M \models_{X[F/x]}^S \phi$. Now define the function $H: X \rightarrow \part(\dom(M)) \backslash \{\emptyset\}$ so that, for all $s \in X$, $H(s) = \{F(s)\}$: then $X[H/x] = X[F/x]$, and therefore by induction hypothesis $M \models_{X[H/x]}^L \phi$, and hence by rule \textbf{TS-$\exists_L$} $M \models_X^L \exists x \phi$. Conversely, suppose that $M \models_X^L \exists x \phi$: then, by rule \textbf{TS-$\exists_L$}, there exists a function $H: X \rightarrow \part(\dom(M)) \backslash \{\emptyset\}$ such that $M \models_{X[H/x]}^L \phi$. Then, by the Axiom of Choice, there exists a \emph{choice function} $F: X \rightarrow \dom(X)$ such that, for all $s \in X$, $F(s) \in H(s)$; therefore, $X[F/x] \subseteq X[H/x]$ and, by downwards closure, $M \models_{X[F/x]}^L \phi$. But then by induction hypothesis $M \models_{X[F/x]}^S \phi$ and, by rule \textbf{TS-$\exists_L$}, $M \models_X^S \phi$. 
\end{proof}

As we will argue in Section \ref{subsect:inclog}, for the logics that we will study for which a difference exists between lax and strict semantics the former will be the most natural choice; therefore, from this point until the end of this work the symbol $\models$  written without superscripts will stand for the relation $\models^L$.
\subsection{Constancy logic}
\label{subsect:constancy}
In this section, we will present and examine a simple fragment of dependence logic. This fragment, which we will call \emph{constancy logic}, consists of all the formulas of dependence logic in which only dependence atoms of the form $=\!\!(t)$ occur; or, equivalently, it can be defined as the extension of (team) first order logic obtained by adding \emph{constancy atoms} to it, with the semantics given by the following definition:
\begin{defin}[Constancy atoms]
	\label{def:const}
	Let $M$ be a first order model, let $X$ be a team over it, and let $t$ be a term over the signature of $M$ and with variables in $\dom(X)$. Then 
\begin{description}
\item[TS-const:] $M \models_X =\!\!(t)$ if and only if, for all $s, s' \in X$, 
	$t\langle s\rangle = t\langle s'\rangle$.
\end{description}
\end{defin}
Clearly, constancy logic is contained in dependence logic. Furthermore, over open formulas it is more expressive than first order logic proper, since, as already mentioned, the constancy atom $=(x)$ is a counterexample to Proposition \ref{propo:FOflat}.\\

The question then arises whether constancy logic is properly contained in dependence logic, or if it coincides with it. This will be answered through the following results:
\begin{propo}
	\label{propo:const_out}
	Let $\phi$ be a constancy logic formula, let $z$ be a variable not occurring in $\phi$, and let $\phi'$ be obtained from $\phi$ by substituting one instance of $=\!\!(t)$ with the expression $z = t$. 

	Then 
	\[
		M \models_X \phi \Leftrightarrow M \models_X \exists z (=\!\!(z) \wedge \phi').
	\]
\end{propo}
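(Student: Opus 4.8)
The plan is to prove the equivalence by structural induction on $\phi$, keeping track of the position of the distinguished occurrence of $=\!\!(t)$. Before starting the induction I would isolate the behaviour of the prefix $\exists z(=\!\!(z) \wedge \cdot)$ by recording the auxiliary observation that, for \emph{any} formula $\alpha$ and \emph{any} team $X$,
\[
 M \models_X \exists z (=\!\!(z) \wedge \alpha) \iff \text{there is } c \in \dom(M) \text{ such that } M \models_{X[c/z]} \alpha .
\]
This follows directly from the semantic rules: under \textbf{TS-$\exists_L$} a witness is a function $H$ yielding the team $X[H/z]$, and the conjunct $=\!\!(z)$ forces, by \textbf{TS-const}, all values of $z$ in $X[H/z]$ to coincide, i.e. $H(s) = \{c\}$ for a single constant $c$ (so $X[H/z] = X[c/z]$); conversely the constant map $s \mapsto \{c\}$ witnesses the existential. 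For $X = \emptyset$ both sides hold trivially. With this reformulation the right-hand side of the proposition reads simply ``there exists $c$ with $M \models_{X[c/z]} \phi'$'', which is the form I would carry through the induction.

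The base case is $\phi = \,=\!\!(t)$, so $\phi' = (z=t)$: by the observation the right-hand side says there is $c$ with $t\langle s\rangle = c$ for every $s \in X$, which holds exactly when $t$ is constant on $X$, that is, when $M \models_X\, =\!\!(t)$. In the inductive steps the occurrence lies in one immediate subformula; I write $\psi'$ for that subformula with the atom replaced and leave the sibling $\theta$ untouched, noting that $z$ does not occur in $\theta$ since $z \notin \phi$. Conjunction and universal quantification are then routine: one uses the commutation of $[c/z]$ with conjunction and with $[M/w]$ (for instance $X[c/z][M/w] = X[M/w][c/z]$), applies the induction hypothesis to $\psi$ on the appropriate team, and invokes Locality (Theorem \ref{theo:DLloc}) to strip the irrelevant fresh variable $z$ from $\theta$. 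Disjunction and existential quantification are analogous but require transferring a split $X = Y \cup W$ (respectively a lax function $H_w$) across the bijection $s \mapsto s[c/z]$ between $X$ and $X[c/z]$, and swapping the order in which $c$ and that split (or function) are chosen.

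The only place with genuine content, rather than bookkeeping, is precisely this interaction in the $\vee$ and $\exists$ cases. The apparent difficulty is that on the right-hand side the value $c$ assigned to $z$ is a \emph{single} constant fixed over all of $X$, whereas the atom $=\!\!(t)$ it replaces is evaluated only on the subteam reached after the splits and quantifications, where $t$ may be constant with a value depending on that subteam. The resolution is that I am free to take $c$ to be exactly the value $t$ assumes on the relevant subteam: since the outer $\exists z$ and the internal splits and Skolem functions are all existential, the induction hypothesis allows the constant and the split to be chosen together, so no real dependency is lost and the reordering of these existentials is harmless. I therefore expect the write-up to spend most of its effort making the substitution-commutation identities and this quantifier-reordering explicit, with Locality applied at each branching to discard $z$ from the unmodified subformula.
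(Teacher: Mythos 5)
Your proposal is correct and takes essentially the same route as the paper's own proof: a structural induction in which $\exists z(=\!\!(z)\wedge\cdot)$ is unfolded into a single constant $c$, the substitution $[c/z]$ is commuted past connectives and quantifiers, the fresh variable $z$ is stripped from the untouched sibling subformula, and the existential choices (the constant versus the split or the lax function) are reordered. The only difference is organizational: the paper inlines your auxiliary observation as the first equivalence of each inductive case and uses ``$z$ does not occur in $\psi_2$'' implicitly where you cite Locality explicitly, but the content is identical.
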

\begin{proof}
	The proof is by induction on $\phi$.
	\begin{enumerate}
		\item If the expression $=\!\!(t)$ does not occur in $\phi$, then $\phi' = \phi$ and we trivially have that $\phi \equiv \exists z (=\!\!(z) \wedge \phi)$, as required.
		\item If $\phi$ is $=\!\!(t)$ itself then $\phi'$ is $z = t$, and 
			\begin{align*}
				& M \models_X \exists z (=\!\!(z) \wedge z = t) \Leftrightarrow 
				\exists m  \in \dom(M)\mbox{ s.t. } M \models_{X[m/z]} z = t \Leftrightarrow\\
				&\Leftrightarrow \exists m \in \dom(M) \mbox{ s.t. } t\langle s\rangle = m \mbox{ for all } s \in X \Leftrightarrow M \models_X =\!\!(t)
			\end{align*}
			as required, where we used $X[m/z]$ as a shorthand for $\{s(m/z) : s \in X\}$.
		\item If $\phi$ is $\psi_1 \vee \psi_2$, let us assume without loss of generality that the instance of $=\!\!(t)$ that we are considering is in $\psi_1$. Then $\psi'_2 = \psi_2$, and since $z$ does not occur in $\psi_2$
			\begin{align*}
			&M \models_X \exists z(=\!\!(z) \wedge (\psi'_1 \vee \psi_2)) \Leftrightarrow \exists m \mbox{ s.t. } M \models_{X[m/z]} \psi'_1 \vee \psi_2 \Leftrightarrow\\
			&\Leftrightarrow \exists m, X_1, X_2 \mbox{ s.t. } X_1 \cup X_2 = X, M \models_{X_1[m/z]} \psi'_1 \mbox{ and } M \models_{X_2[m/z]} \psi_2 \Leftrightarrow\\
			&\Leftrightarrow \exists m, X_1, X_2 \mbox{ s.t. } X_1 \cup X_2 = X, M \models_{X_1[m/z]} \psi'_1 \mbox{ and } M \models_{X_2} \psi_2 \Leftrightarrow\\
			&\Leftrightarrow X_1, X_2 \mbox{ s.t. } X_1 \cup X_2 = X, M \models_{X_1} \exists z (=\!\!(z) \wedge \psi'_1) \mbox{ and } M \models_{X_2} \psi_2 \Leftrightarrow\\
			&\Leftrightarrow X_1, X_2 \mbox{ s.t. } X_1 \cup X_2 = X, M \models_{X_1} \psi_1 \mbox{ and } M \models_{X_2} \psi_2 \Leftrightarrow\\
			&\Leftrightarrow M \models_{X} \psi_1 \vee \psi_2
		\end{align*}
		as required.
	\item If $\phi$ is $\psi_1 \wedge \psi_2$, let us assume again that the instance of $=\!\!(t)$ that we are considering is in $\psi_1$. Then $\psi_2' = \psi_2$, and 
		\begin{align*}
			&M \models_X \exists z(=\!\!(z) \wedge \psi'_1 \wedge \psi_2) \Leftrightarrow\\
			&\Leftrightarrow \exists m \mbox{ s.t. } M \models_{X[m/z]} \psi'_1 \mbox{ and } M \models_{X[m/z]} \psi_2 \Leftrightarrow\\
			&\Leftrightarrow M \models_X \exists z(=\!\!(z) \wedge \psi'_1) \mbox{ and } M \models_X \psi_2 \Leftrightarrow\\
			&\Leftrightarrow M \models_X \psi_1 \mbox{ and } M \models_X \psi_2 \Leftrightarrow\\
			&\Leftrightarrow M \models_X \psi_1 \wedge \psi_2.
		\end{align*}
	\item If $\phi$ is $\exists x \psi$, 
		\begin{align*}
			&M \models_X \exists z (=\!\!(z) \wedge \exists x \psi') \Leftrightarrow \\
			&\Leftrightarrow \exists m \mbox{ s.t. } M \models_{X[m/z]} \exists x \psi' \Leftrightarrow\\
			&\Leftrightarrow \exists m, \exists H: X[m/z] \rightarrow \part(\dom(M)) \backslash \{\emptyset\} \mbox{ s.t. } M \models_{X[m/z][H/x]} \psi' \Leftrightarrow\\
			&\Leftrightarrow \exists H': X \rightarrow \part(\dom(M)) \backslash \{\emptyset\}, \exists m \mbox{ s.t. } M \models_{X[H'/x][m/z]} \psi' \Leftrightarrow\\
			&\Leftrightarrow \exists H': X \rightarrow \part(\dom(M)) \backslash \{\emptyset\} \mbox{ s.t. } M \models_{X[H'/x]} \exists z (=\!\!(z) \wedge \psi') \Leftrightarrow\\
			&\Leftrightarrow \exists H': X \rightarrow \part(\dom(M)) \backslash \{\emptyset\}, \mbox{ s.t. } M \models_{X[H'/x]} \psi \Leftrightarrow\\
			&\Leftrightarrow M \models_X \exists x \psi.
		\end{align*}
	\item If $\phi$ is $\forall x \psi$, 
		\begin{align*}
			&M \models_X \exists z(=\!\!(z) \wedge \forall x \psi') \Leftrightarrow\\
			&\Leftrightarrow \exists m \mbox{ s.t. } M \models_{X[m/z]} \forall x \psi' \Leftrightarrow \\
			&\Leftrightarrow \exists m \mbox{ s.t. } M \models_{X[m/z][M/x]} \psi' \Leftrightarrow \\
			&\Leftrightarrow \exists m \mbox{ s.t. } M \models_{X[M/x][m/z]} \psi' \Leftrightarrow \\
			&\Leftrightarrow M \models_{X[M/x]} \exists z (=\!\!(z) \wedge \psi') \Leftrightarrow \\
			&\Leftrightarrow M \models_{X[M/x]} \psi \Leftrightarrow \\
			&\Leftrightarrow M \models_{X} \forall x \psi.
		\end{align*}
	\end{enumerate}
\end{proof}
As a corollary of this result, we get the following normal form theorem for constancy logic:\footnote{This normal form theorem is very similar to the one of dependence logic proper found in \cite{vaananen07}. See also \cite{durand11} for a similar, but not identical result, developed independently, which Arnaud Durand and Juha Kontinen use in that paper in order to characterize the expressive powers of subclasses of dependence logic in terms of the maximum allowed width of their dependence atoms.}
\begin{coro}
	Let $\phi$ be a constancy logic formula. Then $\phi$ is logically equivalent to a constancy logic formula of the form 
	\[
		\exists z_1 \ldots z_n \left( \bigwedge_{i=1}^n =\!\!(z_i) \wedge \psi(z_1 \ldots z_n)\right)
	\]
	for some tuple of variables $\tuple z = z_1 \ldots z_n$ and some first order formula $\psi$. 
\end{coro}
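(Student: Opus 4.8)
The plan is to prove the statement by induction on the number $n$ of occurrences of constancy atoms in $\phi$, using Proposition \ref{propo:const_out} to extract the atoms one at a time and then collecting the resulting quantifiers and constancy conditions into a single prenex block. For the base case $n=0$, the formula $\phi$ contains no constancy atoms and is therefore an ordinary first order formula; it is already of the required shape, with an empty tuple $\tuple z$, an empty conjunction, and $\psi := \phi$.

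For the inductive step, suppose $\phi$ has $n+1$ occurrences of constancy atoms and pick one of them, say an instance of $=\!\!(t)$. I would choose a variable $z$ not occurring in $\phi$ and let $\phi'$ be obtained by replacing that instance with the first order literal $z = t$. By Proposition \ref{propo:const_out} we then have $\phi \equiv \exists z (=\!\!(z) \wedge \phi')$, and, crucially, $\phi'$ now contains only $n$ occurrences of constancy atoms, since the replaced occurrence has become a genuine first order literal. The induction hypothesis therefore applies to $\phi'$, yielding variables $z_1, \ldots, z_n$ (which we may take distinct from $z$ and from each other) and a first order formula $\psi$ with $\phi' \equiv \exists z_1 \ldots z_n (\bigwedge_{i=1}^n =\!\!(z_i) \wedge \psi)$. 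Substituting this into the equivalence for $\phi$ gives
\[
	\phi \equiv \exists z \left(=\!\!(z) \wedge \exists z_1 \ldots z_n \left(\bigwedge_{i=1}^n =\!\!(z_i) \wedge \psi\right)\right).
\]

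It then remains to move the inner quantifier block to the front, that is, to establish the prenex law $=\!\!(z) \wedge \exists z_1 \ldots z_n \chi \equiv \exists z_1 \ldots z_n (=\!\!(z) \wedge \chi)$ whenever the $z_i$ are distinct from $z$. This follows from Theorem \ref{theo:DLloc}: since $z$ does not belong to $\{z_1, \ldots, z_n\}$, any team obtained from $X$ by supplementing it with values for $z_1, \ldots, z_n$ has the same restriction to $z$ as $X$, so by locality the atom $=\!\!(z)$ holds of the supplemented team if and only if it holds of $X$ itself; hence the constancy conjunct may be pushed inside or pulled outside the quantifier block freely. Applying this law, merging the adjacent existential prefixes, and renaming $z$ to $z_0$ turns the displayed formula into $\exists z_0 z_1 \ldots z_n (\bigwedge_{i=0}^n =\!\!(z_i) \wedge \psi)$, which is of the desired form with $n+1$ constancy conjuncts, completing the induction.

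The only step requiring genuine care is the prenexing just described. One must be disciplined in choosing fresh variables at each stage so that the quantified variables never clash with the constancy conjuncts being passed, and one must justify that pulling the existentials past those conjuncts is sound in \emph{team} semantics rather than merely in the classical Tarskian sense. The locality of constancy logic (Theorem \ref{theo:DLloc}) is precisely what licenses this manipulation, and it is worth emphasising that the corresponding rearrangement would not in general be available for logics of imperfect information that lack locality.
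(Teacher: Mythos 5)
Your proof is correct and follows essentially the same route as the paper's: the paper likewise obtains the normal form by repeatedly applying Proposition \ref{propo:const_out} to extract the constancy atoms one at a time and then flattening the resulting nested prefix $\exists z_1 (=\!\!(z_1) \wedge \exists z_2(=\!\!(z_2) \wedge \ldots))$ into a single existential block. The only difference is one of detail: where the paper dismisses the flattening as ``easy to see, from the semantics of our logic,'' you justify the required prenex law $=\!\!(z) \wedge \exists z_1 \ldots z_n \chi \equiv \exists z_1 \ldots z_n (=\!\!(z) \wedge \chi)$ explicitly via locality (Theorem \ref{theo:DLloc}), which is a legitimate and indeed careful way to discharge that step.
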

\begin{proof}
	Repeatedly apply Proposition \ref{propo:const_out} to ``push out'' all constancy atoms from $\phi$, thus obtaining a formula, equivalent to it, of the form
	\[
		\exists z_1 (=\!\!(z_1) \wedge \exists z_2 (=\!\!(z_2) \wedge \ldots \wedge \exists z_n(=\!\!(z_n) \wedge \psi(z_1 \ldots z_n)))
	\]
	for some first order formula $\psi(z_1 \ldots z_n)$. It is then easy to see, from the semantics of our logic, that this is equivalent to 
	\[
		\exists z_1 \ldots z_n(=\!\!(z_1) \wedge \ldots \wedge =\!\!(z_n) \wedge \psi(z_1 \ldots z_n))
	\]
	as required. 
\end{proof}
The following result shows that, over sentences, constancy logic is precisely as expressive as first order logic:
\begin{coro}
	\label{coro:const_remove}
	Let $\phi = \exists \tuple z \left(\bigwedge_i =\!\!(z_i) \wedge \psi(\tuple z)\right)$ be a constancy logic sentence in normal form.

	Then $\phi$ is logically equivalent to $\exists \tuple z \psi(\tuple z)$. 
\end{coro}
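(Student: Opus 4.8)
The plan is to evaluate $\phi$ on the team $\{\emptyset\}$, as is appropriate for a sentence, and to observe that the conjoined constancy atoms force the witnessing team produced by the existential quantifiers to collapse to a single assignment. This reduces both sides of the claimed equivalence to ordinary Tarskian satisfaction of the first order sentence $\exists \tuple z \psi(\tuple z)$, at which point the result follows from Proposition \ref{propo:FO_Team2Tarski}.

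First I would unfold the lax existential rule \textbf{TS-$\exists_L$} for the block $\exists \tuple z = \exists z_1 \ldots z_n$: we have $M \models_{\{\emptyset\}} \phi$ if and only if there are functions $H_1, \ldots, H_n$ (each mapping into $\part(\dom(M)) \setminus \{\emptyset\}$) such that, writing $Y = \{\emptyset\}[\tuple H/\tuple z]$, both $M \models_Y \bigwedge_i =\!\!(z_i)$ and $M \models_Y \psi(\tuple z)$ hold by rule \textbf{TS-$\wedge$}. Since each $H_i$ takes only nonempty values and we begin from the nonempty team $\{\emptyset\}$, the team $Y$ is itself nonempty.

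The key step is then the following observation. Because $\phi$ is a sentence, $\free(\psi) \subseteq \{z_1, \ldots, z_n\}$ and $\dom(Y) = \{z_1, \ldots, z_n\}$. The condition $M \models_Y \bigwedge_i =\!\!(z_i)$ asserts, by rule \textbf{TS-const}, that each $z_i$ takes a single value throughout $Y$; but an assignment with domain $\{z_1, \ldots, z_n\}$ is completely determined by the values it gives to $z_1, \ldots, z_n$, so a nonempty team on which every $z_i$ is constant can contain only one assignment. Hence $Y = \{s\}$ for a single assignment $s$, and by Proposition \ref{propo:FO_Team2Tarski} the condition $M \models_{\{s\}} \psi$ coincides with Tarskian satisfaction $M \models_s \psi$. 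This shows that $M \models_{\{\emptyset\}} \phi$ implies the existence of an assignment to $\tuple z$ satisfying $\psi$, that is, $M \models \exists \tuple z \psi(\tuple z)$.

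For the converse I would run the argument backwards: given a Tarskian witness $s$ with $M \models_s \psi$, take each $H_i$ to be the constant function returning the singleton $\{s(z_i)\}$, so that $Y = \{s\}$; the constancy atoms hold trivially on a singleton and $M \models_{\{s\}} \psi$ holds by Proposition \ref{propo:FO_Team2Tarski}, whence $M \models_{\{\emptyset\}} \phi$. Combining the two directions, and invoking Proposition \ref{propo:FO_Team2Tarski} once more to identify team-semantic and Tarskian satisfaction of the first order sentence $\exists \tuple z \psi(\tuple z)$, yields the claimed equivalence. The only point demanding care is the nonemptiness of $Y$: it is precisely this that prevents the constancy atoms from being vacuously satisfied by the empty team, and it is guaranteed by the restriction in rule \textbf{TS-$\exists_L$} that each $H_i$ return a nonempty set of values.
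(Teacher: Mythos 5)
Your proof is correct, and its overall strategy matches the paper's: both reduce satisfaction of $\phi$ over $\{\emptyset\}$ to Tarskian satisfaction of $\psi$ at a single assignment, and both finish by appealing to Proposition \ref{propo:FO_Team2Tarski}. The middle step, however, is genuinely different. You observe directly that since $\dom(Y) = \{z_1,\ldots,z_n\}$ and every $z_i$ is constant on the nonempty witnessing team $Y$, that team must collapse to a singleton; the constancy atoms thus do all the work, and no property of $\psi$ is needed beyond Proposition \ref{propo:FO_Team2Tarski}. The paper instead first recasts satisfaction of $\phi$ as the existence of a product team built from nonempty value sets $A_1, \ldots, A_n$ satisfying $\psi$, and then invokes flatness of first order formulas (Proposition \ref{propo:FOflat}) to pass between such product teams and single assignments. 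Your route is the more economical one: it avoids the flatness lemma entirely, and it makes explicit the singleton-collapse observation that the paper's opening equivalence (``by the rules of our semantics'') implicitly relies on for its left-to-right direction, where in general a team produced by iterated lax existential quantification need not be a product at all. Both arguments treat the one delicate point --- nonemptiness of $Y$, guaranteed because each $H_i$ returns nonempty sets of values --- in the same way.
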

\begin{proof}
	By the rules of our semantics, $M \models_{\{\emptyset\}} \psi$ if and only if there exists a family $A_1 \ldots A_n$ of nonempty sets of elements in $\dom(M)$ such that, for 
	\[
		X = \{(z_1:= m_1 \ldots z_n:= m_n) : (m_1 \ldots m_n) \in A_1 \times \ldots \times A_n\},
	\] 
	it holds that $M \models_X \psi$. But $\psi$ is first-order, and therefore, by Proposition \ref{propo:FOflat}, this is the case if and only if for all $m_1 \in A_1, \ldots, m_n \in A_n$ it holds that $M \models_{\{(z_1:m_1, \ldots z_n:m_n)\}} \psi$.

	But then $M \models_{\{\emptyset\}} \phi$ is and only if there exist $m_1 \ldots m_n$ such that this holds;\footnote{Indeed, if this is the case we can just choose $A_1 = \{m_1\}, \ldots, A_n = \{m_n\}$, and conversely if $A_1 \ldots A_n$ exist with the required properties we can simply select arbitrary elements of them for $m_1 \ldots m_n$.} and therefore, by Proposition \ref{propo:FO_Team2Tarski}, $M \models_{\{\emptyset\}} \phi$ if and only if $M \models_\emptyset \exists z_1 \ldots z_n \psi(z_1 \ldots z_n)$ according to Tarski's semantics, or equivalently, if and only if $M \models_{\{\emptyset\}} \exists z_1 \ldots z_n \psi(z_1 \ldots z_n)$ according to team semantics. 
\end{proof}
Since, by Theorem \ref{theo:DLsent}, dependence logic is strictly stronger than first order logic over sentences, this implies that constancy logic is strictly weaker than dependence logic over sentences (and, since sentences are a particular kind of formulas, over formulas too).\\

The relation between first order logic and constancy logic, in conclusion, appears somewhat similar to that between dependence logic and independence logic - that is, in both cases we have a pair of logics which are reciprocally translatable on the level of sentences, but such that one of them is strictly weaker than the other on the level of formulas. This discrepancy between translatability on the level of sentences and translatability on the level of formulas is, in the opinion of the author, one of the most intriguing aspects of logics of imperfect information, and it deserves further investigation.
\section{Inclusion and exclusion in logic}
This section is the central part of the present work. We will begin it by recalling two forms of non-functional dependency which have been studied in Database Theory, and some of their known properties. Then we will briefly discuss their relevance in the framework of logics of imperfect information, and then, in Subsection \ref{subsect:inclog}, we will examine the properties of the logic obtained by adding atoms corresponding to the first sort of non-functional dependency to the basic language of team semantics. Afterward, in Subsection \ref{subsect:equilog} we will see that nothing is lost if we only consider a simpler variant of this kind of dependency: in either case, we obtain the same logical formalism, which - as we will see - is strictly more expressive than first order logic, strictly weaker than independence logic, but incomparable with dependence logic. In Subsection \ref{subsect:exclog}, we will then study the other notion of non-functional dependency that we are considering, and see that the corresponding logic is instead equivalent, in a very strong sense, to dependence logic; and finally, in Subsection \ref{subsect:ielogic} we will examine the logic obtained by adding \emph{both} forms of non-functional dependency to our language, and see that it is equivalent to independence logic. 
\subsection{Inclusion and exclusion dependencies}
\label{subsect:incexc}
Functional dependencies are the forms of dependency which attracted the most interest from database theorists, but they certainly are not the only ones ever considered in that field.\\ 

Therefore, studying the effect of substituting the dependence atoms with ones corresponding to other forms of dependency, and examining the relationship between the resulting logics, may be - in the author's opinion, at least -  a very promising, and hitherto not sufficiently explored, direction of research in the field of logics of imperfect information.\footnote{Apart from the present paper, \cite{engstrom10}, which introduces \emph{multivalued dependence atoms}, is also a step in this direction. The resulting ``multivalued dependence logic'' is easily seen to be equivalent to independence logic.} First of all, as previously mentioned, teams correspond to states of knowledge. But often, relations obtained from a database correspond precisely to information states of this kind;\footnote{As a somewhat naive example, let us consider the problem of finding a spy, knowing that yesterday he took a plane from London's Heathrow airport and that he had at most 100 EUR available to buy his plane ticket. We might then decide to obtain, from the airport systems, the list of the destinations of all the planes which left Heathrow yesterday and whose ticket the spy could have afforded; and this list - that is, the list of all the places that the spy might have reached - would be a state of information of the kind which we are discussing.} and therefore, some of the dependencies studied in database theory may correspond to constraints over the agent's beliefs which often occur in practice, as is certainly the case for functional dependencies.\footnote{For example, our system should be able to represent the assertion that the flight code always determines the destination of the flight.}

Moreover, and perhaps more pragmatically, database researchers have already performed a vast amount of research about the properties of many of these non-functional dependencies, and it does not seem unreasonable to hope that this might allow us to derive, with little additional effort of our own, some useful results about the corresponding logics.\\

The present paper will, for the most part, focus on \emph{inclusion} (\cite{fagin81}, \cite{casanova82}) and \emph{exclusion} (\cite{casanova83}) dependencies and on the properties of the corresponding logics of imperfect information. Let us start by recalling and briefly discussing these dependencies:

\begin{defin}[Inclusion Dependencies]
	Let $R$ be a relation, and let $\tuple x$, $\tuple y$ be tuples of attributes of $R$ of the same length. Then $R \models \tuple x \subseteq \tuple y$ if and only if $R(\tuple x) \subseteq R(\tuple y)$, where 
	\[
		R(\tuple z) = \{r(\tuple z) : r \mbox{ is a tuple in } R\}.
	\]
\end{defin}

In other words, an inclusion dependency $\tuple x \subseteq \tuple y$ states that all values taken by the attributes $\tuple x$ are also taken by the attributes $\tuple y$. It is easy to think up practical examples of inclusion dependencies: one might for instance think of the database consisting of the relations (Person, Date\_of\_Birth), (Father, Children$_F$) and (Mother, Children$_M$).\footnote{Equivalently, one may consider the Cartesian product of these relations, as per the universal relation model (\cite{fagin84}).}  Then, in order to express the statement that every father, every mother and every child in our knowledge base are people and have a date of birth, we may impose the restrictions 
\[
\left\{
\begin{array}{l}
\mbox{Father} \subseteq \mbox{Person}, ~\mbox{Mother} \subseteq \mbox{Person},\\
\mbox{Children}_F \subseteq \mbox{Person}, ~\mbox{Children}_M \subseteq \mbox{Person}
\end{array}
\right\}.
\]
Furthermore, inclusion dependencies can be used to represent the assertion that  every child has a father and a mother, or, in other words, that the attributes Children$_F$ and Children$_M$ take the same values: 
\[
\{\mbox{Children}_F \subseteq \mbox{Children}_M,~ \mbox{Children}_M \subseteq \mbox{Children}_F\}.
\]
Note, however, that inclusion dependencies do not allow us to express all ``natural'' dependencies of our example. For instance, we need to use functional dependencies in order to assert that everyone has exactly one birth date, one father and one mother:\footnote{The simplest way to verify that these conditions are not expressible in terms of inclusion dependencies is probably to observe that inclusion dependencies are \emph{closed under unions}: if the relations $R$ and $S$ respect $\tuple x \subseteq \tuple y$, so does $R \cup S$. Since functional dependencies as the above ones are clearly \emph{not} closed under unions, they cannot be represented by inclusions.}
\[
\{\mbox{Person}\rightarrow \mbox{Date\_of\_Birth},~  \mbox{Children}_F \rightarrow \mbox{Father},~ \mbox{Children}_M \rightarrow \mbox{Mother}\}.
\]

In \cite{casanova82}, a sound and complete axiom system for the implication problem of inclusion dependencies was developed. This system consists of the three following rules: 
\begin{description}
	\item[I1:] For all $\tuple x$, $\vdash \tuple x \subseteq \tuple x$;
	\item[I2:] If $|\tuple x| = |\tuple y| = n$ then, for all $m \in \mathbb N$ and all $\pi: 1 \ldots m \rightarrow 1 \ldots n$, 
		\[
			\tuple x \subseteq \tuple y \vdash x_{\pi(1)} \ldots x_{\pi(m)} \subseteq y_{\pi(1)} \ldots y_{\pi(m)}; 
		\]
	\item[I3:] For all tuples of attributes of the same length $\tuple x$, $\tuple y$, and $\tuple z$, 
		\[
			\tuple x \subseteq \tuple y, \tuple y \subseteq \tuple z \vdash \tuple x \subseteq \tuple z.
		\]
\end{description}
\begin{theo}[Soundness and completeness of inclusion axioms \cite{casanova82}]
	Let $\Gamma$ be a set of inclusion dependencies and let $\tuple x$, $\tuple y$ be tuples of relations of the same length. Then 
	\[
		\Gamma \vdash \tuple x \subseteq \tuple y
	\]
	can be derived from the axioms \textbf{I1}, \textbf{I2} and \textbf{I3} if and only if all relations which respect all dependencies of $\Gamma$ also respect $\tuple x \subseteq \tuple y$.
\end{theo}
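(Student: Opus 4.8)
The plan is to prove the two directions separately: soundness of the rules \textbf{I1}--\textbf{I3}, and their completeness.

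For soundness I would check that each rule preserves the property of being respected by an arbitrary relation $R$, and then conclude by induction on the length of a derivation. Rule \textbf{I1} is immediate, since $R(\tuple x) \subseteq R(\tuple x)$ always holds. For \textbf{I2}, suppose $R(\tuple x) \subseteq R(\tuple y)$ with $|\tuple x| = |\tuple y| = n$ and let $\pi : \{1, \ldots, m\} \to \{1, \ldots, n\}$. Any tuple in $R(x_{\pi(1)} \ldots x_{\pi(m)})$ has the form $(r(x_{\pi(1)}), \ldots, r(x_{\pi(m)}))$ for some $r \in R$; since $r(\tuple x) \in R(\tuple x) \subseteq R(\tuple y)$ there is $r' \in R$ with $r'(y_i) = r(x_i)$ for every $i$, whence $(r'(y_{\pi(1)}), \ldots, r'(y_{\pi(m)})) = (r(x_{\pi(1)}), \ldots, r(x_{\pi(m)}))$, so the projected/permuted inclusion holds. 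Rule \textbf{I3} is just transitivity of $\subseteq$. Soundness of the whole system follows at once.

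The interesting direction is completeness, and I would argue the contrapositive. Assume $\Gamma \not\vdash \tuple x \subseteq \tuple y$ and exhibit a single relation $R$ that respects every dependency in $\Gamma$ but violates $\tuple x \subseteq \tuple y$. The construction is a \emph{chase}. Let $U$ be the finite set of all attributes occurring in $\Gamma$, $\tuple x$, and $\tuple y$. Begin with one seed tuple $r_0$ over $U$ that assigns pairwise distinct fresh values to all attributes of $U$, and write $\tuple a = r_0(\tuple x)$ (so $a_i = a_j$ exactly when $x_i = x_j$ as attributes). Then repeatedly repair $\Gamma$: whenever some $\tuple u \subseteq \tuple v \in \Gamma$ and some already-constructed tuple $r$ has $r(\tuple u)$ not appearing as $r'(\tuple v)$ for any constructed $r'$, add a new tuple $r''$ with $r''(\tuple v) = r(\tuple u)$ and pairwise distinct fresh values in all remaining positions. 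Let $R$ be the (possibly infinite) union of all tuples ever added; by construction $R$ respects every dependency of $\Gamma$.

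It then remains to show $R \not\models \tuple x \subseteq \tuple y$, i.e. that the seed pattern $\tuple a = r_0(\tuple x)$ never reappears as $r(\tuple y)$ for any $r \in R$; this is the heart of the argument and the step I expect to be the main obstacle. I would prove the correspondence lemma: for every attribute tuple $\tuple w$ with $|\tuple w| = n$ there is $r \in R$ with $r(\tuple w) = \tuple a$ if and only if $\Gamma \vdash \tuple x \subseteq \tuple w$. The ``if'' direction follows by tracking a derivation through the chase, using \textbf{I3} for the composition of inclusion steps and \textbf{I2} for projection and repetition of coordinates. The ``only if'' direction is by induction on the chase: since every non-copied position of a newly added tuple carries a fresh value, any position at which $\tuple a$ appears in $r''$ must have been copied from the corresponding coordinate of $r(\tuple u)$, so $r$ already carried $\tuple a$ on the relevant attributes of $\tuple u$, which are derivable by induction, and then $\tuple u \subseteq \tuple v$ together with \textbf{I2} and \textbf{I3} forces $\Gamma \vdash \tuple x \subseteq \tuple w$. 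The two subtleties are that values of $\tuple a$ may repeat across coordinates and that the map $\pi$ in \textbf{I2} need be neither injective nor surjective — precisely the cases that rule \textbf{I2} is designed to license. Applying the lemma with $\tuple w = \tuple y$ and invoking $\Gamma \not\vdash \tuple x \subseteq \tuple y$ gives $\tuple a \in R(\tuple x) \setminus R(\tuple y)$, so the inclusion fails. This yields completeness by contraposition and finishes the proof.
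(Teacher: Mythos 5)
The paper contains no proof of this theorem: it is quoted directly from Casanova, Fagin and Papadimitriou, so your attempt can only be measured against the standard argument in the literature, and your overall plan --- soundness by checking each rule, completeness by contraposition via a chase whose key invariant is the correspondence lemma ``$\tuple a$ occurs at $\tuple w$ in $R$ iff $\Gamma \vdash \tuple x \subseteq \tuple w$'' --- is exactly that standard argument. The soundness half is correct as written.

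The completeness half has a genuine gap, and it sits precisely where this paper's rendition of the theorem is more liberal than the original. Under the paper's definitions an inclusion dependency may repeat attributes (rule \textbf{I2} even licenses non-injective $\pi$, a reading you explicitly adopt), but then your repair step ``add a new tuple $r''$ with $r''(\tuple v) = r(\tuple u)$'' can be impossible to execute: take $\Gamma = \{AB \subseteq CC\}$ for pairwise distinct attributes $A$, $B$, $C$. The seed tuple has $r_0(A) \neq r_0(B)$, and no tuple $r''$ can satisfy both $r''(C) = r_0(A)$ and $r''(C) = r_0(B)$, so the chase is stuck and never produces a relation respecting $\Gamma$. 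You flag repeated coordinates in $\tuple a$ and non-injectivity of $\pi$ as the subtleties, but the destructive case is a repeated attribute on the \emph{right-hand side of a dependency in $\Gamma$}, which no repair can accommodate. Moreover this cannot be fixed by a cleverer construction, because in this generality the stated theorem is simply false: every relation respecting $AB \subseteq CC$ must have $r(A) = r(B)$ in each of its tuples, hence respects $A \subseteq B$; yet from this $\Gamma$ the rules \textbf{I1}--\textbf{I3} derive only dependencies whose right-hand side either coincides with the left-hand side or consists solely of occurrences of $C$, so $\Gamma \not\vdash A \subseteq B$. What your argument does prove is the theorem Casanova et al.\ actually state, in which both sides of an inclusion dependency are sequences of \emph{pairwise distinct} attributes and the projection rule uses injective index maps: under that restriction the repair step is always well defined and your freshness induction goes through. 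Even then one caveat remains: your chase yields a possibly infinite counterexample, which suffices if ``relation'' may be infinite, but if the implication is meant over finite relations (as is usual in database theory) you additionally need the Casanova--Fagin--Papadimitriou result that finite and unrestricted implication coincide for inclusion dependencies.
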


However, the combined implication problem for inclusion and functional dependencies is undecidable (\cite{mitchell83}, \cite{chandra85}). \\

Whereas inclusion dependencies state that all values of a given tuple of attributes also occur as values of another tuple of attributes, \emph{exclusion} dependencies state that two tuples of attributes have no values in common: 
\begin{defin}[Exclusion dependencies]
		Let $R$ be a relation, and let $\tuple x$, $\tuple y$ be tuples of attributes of $R$ of the same length. Then $R \models \tuple x ~|~ \tuple y$ if and only if $R(\tuple x) \cap R(\tuple y) = \emptyset$, where 
	\[
		R(\tuple z) = \{r(\tuple z) : r \mbox{ is a tuple in } R\}.
	\]
\end{defin}

Exclusion dependencies can be thought of as a way of partitioning the elements of our domain into \emph{data types}, and of specifying which type corresponds to each attribute. For instance, in the example 
\[
\mbox{(Person, Date\_of\_birth)}\times \mbox{(Father, Children$_F$)}\times \mbox{(Mother, Children$_M$)}
\]
considered above we have two types, corresponding respectively to \emph{people} (for the attributes Person, Father, Mother, Children$_F$ and Children$_M$) and \emph{dates} (for the attribute Date\_of\_birth). The requirement that no date of birth should be accepted as a name of person, nor vice versa, can then be expressed by the set of exclusion dependencies 
\[
\{ A ~|~ \mbox{Date\_of\_birth} :  A = \mbox{Person}, \mbox{Father}, \mbox{Mother}, \mbox{Children}_M, \mbox{Children}_F\}.
\]

Other uses of exclusion dependencies are less common, but they still exist: for example, the statement that no one is both a father and a mother might be expressed as $\mbox{Father} ~|~ \mbox{Mother}$.

In \cite{casanova83}, the axiom system for inclusion dependencies was extended to deal with both inclusion and exclusion dependencies as follows:
\begin{enumerate}
	\item \emph{Axioms for inclusion dependencies:}
		\begin{description}
				\item[I1:] For all $\tuple x$, $\vdash \tuple x \subseteq \tuple x$;
				\item[I2:] If $|\tuple x| = |\tuple y| = n$ then, for all $m \in \mathbb N$ and all $\pi: 1 \ldots m \rightarrow 1 \ldots n$, 
		\[
			\tuple x \subseteq \tuple y \vdash x_{\pi(1)} \ldots x_{\pi(m)} \subseteq y_{\pi(1)} \ldots y_{\pi(m)}; 
		\]
				\item[I3:] For all tuples of attributes of the same length $\tuple x$, $\tuple y$ and $\tuple z$, 
		\[
			\tuple x \subseteq \tuple y, \tuple y \subseteq \tuple z \vdash \tuple x \subseteq \tuple z;
		\]
		\end{description}
	\item \emph{Axioms for exclusion dependencies:}
		\begin{description}
			\item[E1:] For all $\tuple x$ and $\tuple y$ of the same length, $\tuple x ~|~ \tuple y \vdash \tuple y ~|~ \tuple x$;
			\item[E2:] If $|\tuple x| = |\tuple y| = n$ then, for all $m \in \mathbb N$ and all $\pi: 1 \ldots m \rightarrow 1 \ldots n$, 
		\[
			x_{\pi(1)} \ldots x_{\pi(m)} ~|~ y_{\pi(1)} \ldots y_{\pi(m)} \vdash \tuple x ~|~ \tuple y;
		\]
			\item[E3:] For all $\tuple x$, $\tuple y$ and $\tuple z$ such that $|\tuple y| = |\tuple z|$, $\tuple x ~|~ \tuple x \vdash \tuple y ~|~ \tuple z$;
		\end{description}
	\item \emph{Axioms for inclusion/exclusion interaction:}
		\begin{description}
			\item[IE1:] For all $\tuple x$, $\tuple y$ and $\tuple z$ such that $|\tuple y| = |\tuple z|$, $\tuple x ~|~ \tuple x \vdash \tuple y \subseteq \tuple z$;
			\item[IE2:] For all $\tuple x, \tuple y, \tuple z, \tuple w$ of the same length, $\tuple x ~|~ \tuple y, \tuple z \subseteq \tuple x, \tuple w \subseteq \tuple y \vdash \tuple z ~|~ \tuple w$.
		\end{description}
\end{enumerate}
\begin{theo}[\cite{casanova83}]
	The above system is sound and complete for the implication problem for inclusion and exclusion dependencies. 
\end{theo}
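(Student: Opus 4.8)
The plan is to prove the two directions separately, with soundness being routine and completeness carrying all the weight. For \textbf{soundness} I would simply check, rule by rule, that each axiom preserves satisfaction in an arbitrary relation $R$. The inclusion rules are immediate from the fact that $\subseteq$ between the projected value-sets $R(\tuple x)$ is reflexive (\textbf{I1}), transitive (\textbf{I3}), and stable under reindexing the coordinates by any $\pi$ (\textbf{I2}, since a coincidence of full $\tuple x$- and $\tuple y$-values forces a coincidence of their $\pi$-projections). The exclusion rules are equally direct: \textbf{E1} is the symmetry of disjointness; \textbf{E2} holds because a common full tuple would project to a common $\pi$-tuple; and \textbf{E3}, \textbf{IE1} both rest on the observation that $\tuple x ~|~ \tuple x$ forces $R(\tuple x) = \emptyset$, hence $R = \emptyset$, in which case every dependency holds vacuously. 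Finally \textbf{IE2} follows from $R(\tuple z) \cap R(\tuple w) \subseteq R(\tuple x) \cap R(\tuple y) = \emptyset$ once $R(\tuple z) \subseteq R(\tuple x)$ and $R(\tuple w) \subseteq R(\tuple y)$.

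For \textbf{completeness} I would argue by contraposition: assuming $\Gamma \not\vdash \sigma$, I construct a relation $R$ with $R \models \Gamma$ but $R \not\models \sigma$. First I would dispose of the degenerate case: if $\Gamma \vdash \tuple u ~|~ \tuple u$ for some $\tuple u$, then by \textbf{IE1} and \textbf{E3} every inclusion and every exclusion dependency is derivable, so $\Gamma \vdash \sigma$, contrary to hypothesis; hence I may assume $\Gamma$ never proves an emptiness dependency, and in particular admits nonempty models. The counterexample is then produced by a \emph{chase}: starting from a small seed relation tailored to violate $\sigma$, I repeatedly apply every inclusion dependency of $\Gamma$ (whenever some tuple realizes a value $\tuple a$ on $\tuple u$ but no tuple realizes $\tuple a$ on $\tuple v$ for $\tuple u \subseteq \tuple v \in \Gamma$, add a new tuple carrying $\tuple a$ on $\tuple v$ and fresh symbols elsewhere) until a fixpoint $R$ is reached. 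Since a collision witnessing the failure of an exclusion is always exhibited by two tuples appearing at a finite stage, exclusion constraints are preserved in the (possibly infinite) limit, so it suffices to keep each finite stage collision-free; and $R \models \Gamma$'s inclusions by construction.

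The heart of the argument -- and the step I expect to be the main obstacle -- is a \emph{faithfulness lemma} for the chase, asserting that the value-appearances created by the chase mirror exactly the dependencies derivable from $\Gamma$. Concretely, one shows by induction on the chase that whenever the seed value $\tuple a$ placed on $\tuple x$ reappears as a $\tuple y$-value at some stage, the inclusion $\tuple x \subseteq \tuple y$ was already derivable via \textbf{I1}--\textbf{I3}; this is what guarantees $R \not\models \sigma$ in the case $\sigma = \tuple x \subseteq \tuple y$. The delicate part is verifying that no \emph{unintended} collision violates an exclusion $\tuple u ~|~ \tuple v \in \Gamma$: if one did, the faithfulness lemma would present the colliding values as lying in $R(\tuple z) \subseteq R(\tuple u)$ and $R(\tuple w) \subseteq R(\tuple v)$ for suitable $\tuple z, \tuple w$ with derivable inclusions, whereupon \textbf{IE2} (together with \textbf{E1}, \textbf{E2}) would yield a derivation of an emptiness dependency or of $\sigma$ itself, contradicting $\Gamma \not\vdash \sigma$.

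I would then finish by treating the two cases of the case split symmetrically: in the inclusion case the seed is a single generic tuple and the absence of the offending $\tuple y$-value closes the argument, while in the exclusion case the seed forces one shared value between $\tuple x$ and $\tuple y$, and the faithfulness lemma ensures this shared value is never separated away, so that $R(\tuple x) \cap R(\tuple y) \neq \emptyset$ while all of $\Gamma$ continues to hold. The interaction axioms \textbf{IE1} and \textbf{IE2} are precisely the ingredients that tie the inclusion-driven chase to the exclusion constraints, and getting their bookkeeping right is where essentially all the difficulty of the completeness proof resides.
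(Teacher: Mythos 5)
You should first be aware that the paper does not prove this theorem at all: it is imported from the database-theory literature with the citation to Casanova et al.\ as background material, so there is no in-paper argument to compare against and your proposal has to stand on its own. Your soundness half does stand: every rule is checked correctly, including the key observation that $\tuple x ~|~ \tuple x$ forces $R = \emptyset$, which is what makes \textbf{E3} and \textbf{IE1} sound. The completeness half has the right architecture (chase by the inclusion dependencies of $\Gamma$, a provenance invariant, and \textbf{IE2} to turn collisions into derivations), but two of its steps fail as stated. The first is that your faithfulness lemma only tracks \emph{seed} values, while the collisions that threaten an exclusion of $\Gamma$ typically involve values \emph{created by the chase}: for $\Gamma = \{A \subseteq B,\ A \subseteq C,\ B ~|~ C\}$ the fresh value placed at attribute $A$ of a chased tuple is later copied to $B$ in one descendant and to $C$ in another, so $R(B) \cap R(C) \neq \emptyset$ even though no seed value is involved, and the lemma as you state it is silent about this. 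What is needed is a \emph{joint} provenance argument: trace both occurrences of the colliding value tuple, in all coordinates simultaneously, back to their least common ancestor $q$ in the chase tree, obtaining attribute tuples $\tuple z, \tuple w$ with $q(\tuple z) = q(\tuple w)$ and $\Gamma \vdash \tuple z \subseteq \tuple u$, $\Gamma \vdash \tuple w \subseteq \tuple v$ (composing \textbf{I2} and \textbf{I3} along the two paths), and then continue tracing from $q$ up to the seed; in the example this repaired analysis yields $\Gamma \vdash A ~|~ A$ by \textbf{IE2}, so the degenerate case absorbs it, and for non-degenerate $\Gamma$ it is exactly this strengthened lemma that rules such collisions out.

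The second, more serious, gap is the seed in the exclusion case. If, as your wording suggests, the seed is a single tuple $t_0$ with $t_0(\tuple x) = t_0(\tuple y)$, the construction is wrong, not merely under-specified. Concretely, let $\Gamma = \{A ~|~ C\}$ and $\sigma = (A,B) ~|~ (B,C)$. Then $\Gamma \not\models \sigma$ semantically (two tuples carrying values $(a,b,\ast)$ and $(\ast,a,b)$ with all remaining entries fresh violate $\sigma$ while keeping $R(A)$ and $R(C)$ disjoint), hence $\Gamma \not\vdash \sigma$ by soundness; but the one-tuple seed must satisfy $t_0(A) = t_0(B) = t_0(C)$, which violates $A ~|~ C \in \Gamma$ already at stage zero, and your recovery step then produces via \textbf{IE2} only $\Gamma \vdash A ~|~ C$ --- which is neither an emptiness dependency nor $\sigma$, so no contradiction is reached and the construction never gets off the ground. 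The repair is to seed with \emph{two} tuples $t_0, t_1$ with $t_0(\tuple x) = t_1(\tuple y) = \tuple a$ and all other entries fresh and pairwise distinct: then any collision between the two chase trees traces back, coordinatewise, to pairs $(x_i, y_i)$ of $\sigma$, so \textbf{IE2} followed by \textbf{E2} yields $\Gamma \vdash \sigma$, while any collision within one tree traces to a single all-distinct root and yields an emptiness dependency via \textbf{IE2} --- a contradiction either way. With these two corrections (the joint provenance lemma and the two-tuple seed) your outline does become a complete proof.
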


It is not difficult to transfer the definitions of inclusion and exclusion dependencies to team semantics, thus obtaining \emph{inclusion atoms} and \emph{exclusion atoms}:
\begin{defin}[Inclusion and exclusion atoms]
	\label{def:inc_exc}
	Let $M$ be a first order model, let $\tuple t_1$ and $\tuple t_2$ be two finite tuples of terms of the same length over the signature of $M$, and let $X$ be a team whose domain contains all variables occurring in $\tuple t_1$ and $\tuple t_2$. Then 
	\begin{description}
	\item[TS-inc:] $M \models_X \tuple t_1 \subseteq \tuple t_2$ if and only if for every $s \in X$ there exists a $s' \in X$ such that $\tuple t_1\langle s\rangle = \tuple t_2\langle s'\rangle$;
	\item[TS-exc:] $M \models_X \tuple t_1 ~|~ \tuple t_2$ if and only if for all $s, s' \in X$, $\tuple t_1\langle s\rangle \not= \tuple t_2\langle s'\rangle$.
	\end{description}
\end{defin}

Returning for a moment to the agent metaphor, the interpretation of these conditions is as follows.\\

A team $X$ satisfies $\tuple t_1 \subseteq \tuple t_2$ if and only if all possible values that the agent believes possible for $\tuple t_1$ are also believed by him or her as possible for $\tuple t_2$ - or, by contraposition, that the agent cannot exclude any value for $\tuple t_2$ which he cannot also exclude as a possible value for $\tuple t_1$. In other words, from this point of view an inclusion atom is a way of specify a state of \emph{ignorance} of the agent: for example, if the agent is a chess player who is participating to a tournament, we may want to represent the assertion that the agent \emph{does not know} whether he will play against a given opponent using the black pieces or the white ones. In other words, if he believes that he \emph{might} play against a given opponent when using the white pieces, he should also consider it possible that he played against him or her using the black ones, and vice versa; or, in our formalism, that his belief set satisfies the conditions
\begin{align*}
&\mbox{Opponent\_as\_White} \subseteq \mbox{Opponent\_as\_Black},\\
&\mbox{Opponent\_as\_Black} \subseteq \mbox{Opponent\_as\_White}.
\end{align*}

This very example can be used to introduce a new dependency atom $\tuple t_1 \bowtie \tuple t_2$, which might perhaps be called an \emph{equiextension atom}, with the following rule:
\begin{defin}[Equiextension atoms]
	\label{def:equi}
	Let $M$ be a first order model, let $\tuple t_1$ and $\tuple t_2$ be two finite tuples of terms of the same length over the signature of $M$, and let $X$ be a team whose domain contains all variables occurring in $\tuple t_1$ and $\tuple t_2$. Then 
	\begin{description}
	\item[TS-equ:] $M \models_X \tuple t_1 \bowtie \tuple t_2$ if and only if $X(\tuple t_1) = X(\tuple t_2)$.
	\end{description}
\end{defin}
It is easy to see that this atom is different, and strictly weaker, from the first order formula 
\[
	\tuple t_1 = \tuple t_2 := \bigwedge_i ((\tuple t_1)_i = (\tuple t_2)_i).
\]
Indeed, the former only requires that the sets of all possible values for $\tuple t_1$ and for $\tuple t_2$ are the same, while the latter requires that $\tuple t_1$ and $\tuple t_2$ coincide in all possible states of things: and hence, for example, the team $X = \{(x:0,y:1),(x:1, y:0)\}$ satisfies $x \bowtie y$ but not $x = y$. 

As we will see later, it is possible to recover inclusion atoms from equiextension atoms and the connectives of our logics.\\

On the other hand, an exclusion atom specifies a state of \emph{knowledge}. More precisely, a team $X$ satisfies $\tuple t_1 ~|~ \tuple t_2$ if and only if the agent can confidently exclude all values that he believes possible for $\tuple t_1$ from the list of the possible values for $\tuple t_2$. For example, let us suppose that our agent is also aware that a boxing match will be had at the same time of the chess tournament, and that he knows that no one of the participants to the match will have the time to play in the tournament too - he has seen the lists of the participants to the two events, and they are disjoint. Then, in particular, our agent knows that no potential winner of the boxing match is also a potential winner of the chess tournament, even know he is not aware of who these winners will be. In our framework, this can be represented by stating our agent's beliefs respect the exclusion atom 
\[
	\mbox{Winner\_Boxing} ~|~ \mbox{Winner\_Chess}.
\]

This is a different, and stronger, condition than the first order expression $\mbox{Winner\_Boxing} \not = \mbox{Winner\_Chess}$: indeed, the latter merely requires that, in any possible state of things, the winners of the boxing match and of the chess tournament are different, while the former requires that \emph{no possible} winner of the boxing match is a potential winner for the chess tournament. So, for example, only the first condition excludes the scenario in which  our agent does not know whether T. Dovramadjiev, a Bulgarian chessboxing champion, will play in the chess tournament or in the boxing match, represented by the team of the form
\[
	X = \begin{array}{c | l l l}
		& \mbox{Winner\_Boxing} & \mbox{Winner\_Chess} & \ldots\\
		\hline
		s_0 & \mbox{T. Dovramadjiev} & \mbox{V. Anand} & \ldots\\
		s_1 & \mbox{T. Woolgar} & \mbox{T. Dovramadijev} & \ldots\\
		\ldots & \ldots & \ldots
	\end{array}
\]
\subsection{Inclusion logic}
\label{subsect:inclog}
In this section, we will begin to examine the properties of \emph{inclusion logic} - that is, the logic obtained adding to (team) first order logic the \emph{inclusion atoms} $\tuple t_1 \subseteq \tuple t_2$ with the semantics of Definition \ref{def:inc_exc}.\\

A first, easy observation is that this logic does not respect the downwards closure property. For example, consider the two assignments $s_0 = (x:0, y:1)$ and $s_1 = (x:1,y:0)$: then, for $X = \{s_0, s_1\}$ and $Y = \{s_0\}$, it is easy to see by rule \textbf{TS-inc} that $M \models_X x \subseteq y$ but $M \not \models_Y x \subseteq y$.\\

Hence, the proof of Proposition \ref{propo:laxeqstrict} cannot be adapted to the case of inclusion logic. The question then arises whether inclusion logic with strict semantics and inclusion logic with lax semantics are different; and, as the next two propositions will show, this is indeed the case. 
\begin{propo}
	\label{propo:dislaxneqstr}
	There exist a model $M$, a team $X$ and two formulas $\psi$ and $\theta$ of inclusion logic such that $M \models_X^L \psi \vee \theta$ but $M \not \models_X^S \psi \vee \theta$.
\end{propo}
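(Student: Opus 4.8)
The plan is to exploit the one genuine difference between the two rules: \textbf{TS-$\vee_S$} forces the witnessing subteams to be \emph{disjoint}, whereas \textbf{TS-$\vee_L$} permits them to overlap. So I would look for a model $M$, a team $X$ and formulas $\psi,\theta$ admitting a cover $X=Y\cup Z$ with $M\models_Y\psi$ and $M\models_Z\theta$ in which the overlap $Y\cap Z$ is genuinely unavoidable. Two preliminary observations guide the search. First, every formula holds on the empty team, so the only way to rule out the ``trivial'' disjoint splits $(X,\emptyset)$ and $(\emptyset,X)$ is to arrange that $M\not\models_X\psi$ and $M\not\models_X\theta$; in particular the cover must use proper subteams. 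Second, a single inclusion atom is closed under unions (as recalled in Subsection~\ref{subsect:incexc}): if $Y\models \tuple t_1\subseteq \tuple t_2$ and $Z\models \tuple t_1\subseteq \tuple t_2$ then $Y\cup Z\models \tuple t_1\subseteq \tuple t_2$. This means one cannot separate the two semantics with $\psi=\theta$ a single inclusion atom, and hence forces the use of two \emph{different} atoms.

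Guided by this, my candidate is $\psi := x\subseteq y$ and $\theta := y\subseteq x$, over a model $M$ with $\{0,1,2\}\subseteq\dom(M)$, and the three-element team
\[
  X = \{(x{:}0,\,y{:}1),\ (x{:}0,\,y{:}0),\ (x{:}2,\,y{:}0)\}.
\]
Write $a=(x{:}0,y{:}1)$, $b=(x{:}0,y{:}0)$ and $c=(x{:}2,y{:}0)$. The design assigns a role to each assignment: $a$ carries a $y$-value ($1$) that occurs nowhere as an $x$-value in $X$, and $c$ carries an $x$-value ($2$) that occurs nowhere as a $y$-value in $X$, while $b$ is neutral in that $\{b\}$ satisfies both atoms. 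For the lax direction I would exhibit the overlapping cover $Y=\{a,b\}$ and $Z=\{b,c\}$: a direct check gives $Y(x)=\{0\}\subseteq\{0,1\}=Y(y)$ and $Z(y)=\{0\}\subseteq\{0,2\}=Z(x)$, so $M\models_Y\psi$, $M\models_Z\theta$, and $Y\cup Z=X$; hence $M\models_X^L\psi\vee\theta$.

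For the strict direction I would show that no disjoint cover works. First, $X(x)=\{0,2\}$ and $X(y)=\{0,1\}$ are mutually non-included, so $M\not\models_X\psi$ and $M\not\models_X\theta$, killing the two trivial splits via the empty-team property. Next comes the key structural point: in any disjoint cover $X=A\sqcup B$ with $M\models_A\psi$ and $M\models_B\theta$, the assignment $a$ cannot lie in $B$ (otherwise $1\in B(y)$, but $B(x)\subseteq X(x)=\{0,2\}$ omits $1$, violating $y\subseteq x$) and $c$ cannot lie in $A$ (otherwise $2\in A(x)$, but $A(y)\subseteq X(y)=\{0,1\}$ omits $2$, violating $x\subseteq y$); hence $a\in A$ and $c\in B$ are forced. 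The only remaining freedom is the placement of $b$, giving exactly the two partitions $(A,B)=(\{a,b\},\{c\})$ and $(\{a\},\{b,c\})$; in the first, $B=\{c\}$ fails $y\subseteq x$ (since $0\in B(y)$ but $B(x)=\{2\}$), and in the second, $A=\{a\}$ fails $x\subseteq y$ (since $0\in A(x)$ but $A(y)=\{1\}$). As this exhausts the disjoint covers (alternatively one may simply enumerate the eight partitions of a three-element set), $M\not\models_X^S\psi\vee\theta$.

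The main obstacle, and the part deserving the most care, is precisely the tension identified in the first paragraph: making the full team $X$ fail \emph{both} atoms (so that honest splitting is required) while still admitting an overlapping cover, and simultaneously forbidding every disjoint one. Union-closure rules out the single-atom route, and one checks that a two-element team cannot work either, since the only proper subteams covering a two-element set are its two singletons, which are already disjoint; thus three assignments — and, in order to defeat both inclusions on the full team, three domain values — are genuinely needed. Once the ``forbidden-placement'' values are chosen so that two assignments are pinned to opposite sides and a third is forced to be shared, the remaining verification is a finite, routine check.
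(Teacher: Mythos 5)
Your proof is correct and takes essentially the same approach as the paper's: a three-assignment team in which two assignments are pinned to opposite disjuncts by values that cannot be witnessed on the other side, while a middle assignment is needed by both, making the overlap unavoidable. Your example (two variables with the mutual inclusions $x \subseteq y$ and $y \subseteq x$ over three domain elements) is marginally more economical than the paper's (three variables with the chained inclusions $x \subseteq y$ and $y \subseteq z$ over a five-element domain), but the combinatorial core of the argument is identical.
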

\begin{proof}
	Let $\dom(M) = \{0,1, 2, 3, 4\}$, let $X$ be the team
	\[
		X = \begin{array}{c | c c c}
				~ & x & y & z\\
				\hline
				s_0 & 0 & 1 & 2\\
				s_1 & 1 & 0 & 3\\
				s_2 & 4 & 3 & 0
			\end{array}
	\]
	and let $\psi = x \subseteq y$, $\theta = y \subseteq z$. 
	\begin{itemize}
		\item $M \models_X^L \psi \vee \theta$:\\
			Let $Y = \{s_0, s_1\}$ and $Z = \{s_1, s_2\}$. Then $Y \cup Z = X$, $Y(x) = \{0,1\} = Y(y)$ and $Z(y) = \{0,3\} = Z(z)$.\\

			Hence, $M \models_Y^L x \subseteq y$ and $M \models_Z^L y \subseteq z$, and therefore $M \models_X^L x \subseteq y \vee y \subseteq z$ as required.
		\item $M \not \models_X^S \psi \vee \theta$:\\
			Suppose that $X = Y \cup Z$, $Y \cap Z = \emptyset$, $M \models_X^S x \subseteq y$ and $M \models_Z^S y \subseteq z$.\\

			Now, $s_2$ cannot belong in $Y$, since $s_2(x) = 4$ and $s_i(y) \not = 4$ for all assignments $s_i$; therefore, we necessarily have that $s_2 \in Z$. But since $M \models_Z^S y \subseteq z$, this implies that there exists an assignment $s_i \in Y$ such that $s_i(z) = s_2(y) = 3$. The only such assignment in $X$ is $s_1$, and therefore $s_1 \in Y$. \\

			Analogously, $s_0$  cannot belong in $Z$: indeed, $s_0(y) = 1 \not = s_i(z)$ for all $i \in 0 \ldots 2$. Therefore, $s_0 \in Y$; and since $M \models_Y^S x \subseteq y$, there exists an $s_i \in Y$ with $s_i(y) = s_0(x) = 0$. But the only such assignment in $X$ is $s_1$, and therefore $s_1 \in Y$. \\

			In conclusion, $Y = \{s_0, s_1\}$, $Z = \{s_1, s_2\}$ and $Y \cap Z = \{s_1\} \not = \emptyset$, which contradicts our hypothesis. 
	\end{itemize}
\end{proof}
\begin{propo}
	\label{propo:exlaxneqstr}
	There exist a model $M$, a team $X$ and a formula $\phi$ of inclusion logic such that $M \models_X^L \exists x \phi$ 
	but $M \not \models_X^S \exists x \phi$.
\end{propo}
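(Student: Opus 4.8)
The plan is to exploit the one structural difference between the two rules for the existential quantifier. Rule \textbf{TS-$\exists_S$} extends each assignment of $X$ by a single value, so $X[F/x]$ has at most $|X|$ elements; rule \textbf{TS-$\exists_L$}, on the other hand, may \emph{split} a single assignment into several, by letting $x$ range over a whole set $H(s)$. Over a singleton team this difference becomes decisive: if $X = \{s_0\}$ then $X[F/x]$ is again a singleton, and on any singleton team an inclusion atom $\tuple t_1 \subseteq \tuple t_2$ degenerates, by rule \textbf{TS-inc}, to the pointwise equality $\tuple t_1\langle s_0\rangle = \tuple t_2\langle s_0\rangle$ (the only available witness $s'$ is $s_0$ itself). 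I would therefore take a body $\phi$ that imposes two \emph{incompatible} such equalities on $x$, so that no single choice of value can satisfy both, while a lax extension that assigns $x$ two distinct values satisfies both inclusions at once.

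Concretely, I would take any model $M$ with $\dom(M) = \{0,1\}$, the singleton team $X = \{(u:0, v:1)\}$, and the inclusion logic formula $\phi = (u \subseteq x) \wedge (v \subseteq x)$, and prove the claim for $\exists x \phi$. For the lax direction, choose $H$ with $H(s_0) = \{0,1\}$, so that
\[
	X[H/x] = \{(u:0, v:1, x:0),\ (u:0, v:1, x:1)\}.
\]
Here the value set of $x$ is $\{0,1\}$, so both $u \subseteq x$ and $v \subseteq x$ hold, since $\{0\}$ and $\{1\}$ are each contained in $\{0,1\}$; by rule \textbf{TS-$\wedge$} and rule \textbf{TS-$\exists_L$} this gives $M \models_X^L \exists x \phi$.

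For the strict direction I would argue by contradiction. If $M \models_X^S \exists x \phi$, then by rule \textbf{TS-$\exists_S$} there is a function $F : X \to \dom(M)$ with $M \models_{X[F/x]}^S \phi$, and $X[F/x] = \{(u:0, v:1, x:F(s_0))\}$ is again a singleton. On this singleton, rule \textbf{TS-$\wedge$} demands both $u \subseteq x$ and $v \subseteq x$, which by rule \textbf{TS-inc} force $F(s_0) = u\langle s_0\rangle = 0$ and $F(s_0) = v\langle s_0\rangle = 1$ simultaneously, a contradiction. Hence $M \not\models_X^S \exists x \phi$, as required.

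The construction is short, so there is no deep obstacle; the single point that genuinely needs to be made explicit is that the strict rule cannot evade the difficulty by any cleverness. This rests entirely on the cardinality observation that a strict existential never enlarges the team, so the witnessing team stays a singleton and the two inclusion atoms collapse to the mutually exclusive equalities $x = 0$ and $x = 1$. I would state this remark prominently, as it is the heart of the separation and plays here exactly the role that the disjointness condition $Y \cap Z = \emptyset$ played in the previous proposition.
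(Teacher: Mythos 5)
Your proof is correct and is essentially the same as the paper's: the paper also uses a two-element domain, the singleton team $\{(y:0,\, z:1)\}$ (you merely rename the variables to $u,v$), the body $y \subseteq x \wedge z \subseteq x$, and the lax witness $H(s_0) = \{0,1\}$, then observes that any strict witness keeps the team a singleton on which the two inclusion atoms collapse to incompatible equalities. The only cosmetic difference is that the paper splits the strict case as ``$F(s_0) \neq 0$ or $F(s_0) \neq 1$'' rather than phrasing it as a contradiction.
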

\begin{proof}
	Let $\dom(M) = \{0,1\}$, let $X$ be the team
	\[
		X = \begin{array}{c | c  c }
				~ & y & z\\
				\hline
				s_0 & 0 & 1
			\end{array}
	\]
	and let $\phi$ be $y \subseteq x \wedge z \subseteq x$. 
	\begin{itemize}
		\item $M \models_X^L \exists x \phi$: \\
			Let $H: X \rightarrow \part(\dom(M))$ be such that $H(s_0) = \{0,1\}$.

Then
			\[
				X[H/x] = \begin{array}{c | c c c}
				~ & y & z & x\\
				\hline
				s'_0 & 0 & 1 & 0\\
				s'_1 & 0 & 1 & 1\\
				\end{array}
			\]
			and hence $X[H/x](y), X[H/x](z) \subseteq X[H/x](x)$, as required.
		\item $M \not \models_X^S \exists x \psi$:\\
			Let $F$ be any function from $X$ to $\dom(M)$. Then 
				\[
				X[F/x] = \begin{array}{c | c c c}
				~ & y & z & x\\
				\hline
				s''_0 & 0 & 1 & F(s_0)\\
				\end{array}
				\]
				But $F(s_0) \not = 0$ or $F(s_0) \not = 1$; and in the first case $M \not \models_{X[F/x]}^S y \subseteq x$, while in the second one $M \not \models_{X[F/x]}^S z \subseteq x$.
	\end{itemize}
\end{proof}

Therefore, when studying the properties inclusion logic it is necessary to specify whether we are are using the strict or the lax semantics for disjunction and existential quantification. However, only one of these choices preserves \emph{locality} in the sense of Theorem \ref{theo:DLloc}, as the two following results show: 
\begin{propo}
	\label{propo:strict_nonlocal}
	The strict semantics does not respect locality in inclusion logic (or in any extension thereof). In other words, there exists a a model $M$, a team $X$ and two formulas $\psi$ and $\theta$ such that $M \models_X^S \psi \vee \theta$, but for $X' = X_{\upharpoonright \free(\phi \vee \psi)}$ it holds that  $M \not \models_{X'}^S \psi \vee \theta$ instead; and analogously, there exists a model $M$, a team $X$ and a formula $\xi$ such that $M \models_X^S \exists x \xi$, but for $X' = X_{\upharpoonright \free(\exists x \xi)}$  we have that $M \not \models_{X'}^S \exists \xi$ instead. 
\end{propo}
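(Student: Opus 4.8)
The plan is to recycle the two teams that already witnessed the failure of the lax/strict equivalence in Propositions \ref{propo:dislaxneqstr} and \ref{propo:exlaxneqstr}, and to \emph{inflate} each of them with a single fresh variable that does not occur in the formulas. The guiding observation is that the strict rules are sensitive to the \emph{number} of distinct assignments in a team: \textbf{TS-$\vee_S$} demands a \emph{disjoint} partition $X = Y \cup Z$, and \textbf{TS-$\exists_S$} demands a genuine \emph{function} $F$ assigning a single witness to each assignment, whereas projecting a team onto a subset of its variables can identify assignments that agreed on those variables, thereby shrinking the team. The strategy is therefore to arrange a team $X$ in which the duplicate copies of one critical assignment supply exactly the ``room'' the strict reading needs, while the restriction $X'$ collapses those copies and reduces to an earlier counterexample.

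For the disjunction clause I would take $\psi = x \subseteq y$, $\theta = y \subseteq z$ over the model with $\dom(M) = \{0,1,2,3,4\}$ exactly as in Proposition \ref{propo:dislaxneqstr}, where the three-row team was shown to fail \textbf{TS-$\vee_S$} because the middle assignment $s_1$ is forced into both blocks of any candidate partition. I would then add a variable $u$ and replace $s_1$ by two rows $s_1^0, s_1^1$ that agree on $x,y,z$ but carry distinct values of $u$. On the resulting team $X$ the partition $Y = \{s_0, s_1^0\}$, $Z = \{s_1^1, s_2\}$ is now genuinely disjoint and witnesses $M \models_X^S \psi \vee \theta$, as a direct check of the two inclusion atoms confirms. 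Restricting to $\free(\psi \vee \theta) = \{x,y,z\}$ merges $s_1^0$ and $s_1^1$ back into a single assignment, so $X'$ is precisely the failing team of Proposition \ref{propo:dislaxneqstr} and hence $M \not\models_{X'}^S \psi \vee \theta$.

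The existential clause is handled symmetrically, starting from Proposition \ref{propo:exlaxneqstr}: over $\dom(M) = \{0,1\}$ with $\xi = (y \subseteq x) \wedge (z \subseteq x)$, the singleton team $\{(y{:}0,z{:}1)\}$ fails \textbf{TS-$\exists_S$} because a single assignment can receive only one value of $x$ under a function $F$, while both $0$ and $1$ must appear among the chosen $x$-values. Adding $u$ and duplicating that assignment into two rows with distinct $u$-values yields a team $X$ on which the function sending one copy to $0$ and the other to $1$ witnesses $M \models_X^S \exists x \xi$; since $u \notin \free(\exists x \xi) = \{y,z\}$, the restriction collapses the two copies and reproduces the singleton of Proposition \ref{propo:exlaxneqstr}, giving $M \not\models_{X'}^S \exists x \xi$. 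The only real point of care, and the conceptual heart of the argument, is to duplicate \emph{exactly} the assignment that the strict condition is forced to reuse, so that one extra copy is enough to make the strict reading succeed on $X$ while its disappearance under projection restores the original deficit on $X'$; the remaining verifications are the routine evaluations of the inclusion atoms on the finitely many rows involved.
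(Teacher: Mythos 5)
Your proposal is correct and follows essentially the same route as the paper: the paper's own counterexamples are obtained precisely by taking the teams of Propositions \ref{propo:dislaxneqstr} and \ref{propo:exlaxneqstr}, adding a fresh variable $u$, and duplicating the critical assignment (the reused middle row for disjunction, the lone row for the existential) so that the strict partition/function succeeds on the inflated team while the restriction to the free variables collapses the duplicates back to the original failing team. All your verifications check out, so there is nothing to add.
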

\begin{proof}
	\begin{enumerate}
		\item Let $\dom(M) = \{0 \ldots 4\}$, let $\psi$ and $\theta$ be $x \subseteq y$ and $y \subseteq z$ respectively, and let 
			\[
						X = \begin{array}{c | c c c c}
				~ & x & y & z & u\\
				\hline
				s_0 & 0 & 1 & 2 & 0\\
				s_1 & 1 & 0 & 3 & 0\\
				s_2 & 1 & 0 & 3 & 1\\
				s_3 & 4 & 3 & 0 & 0
			\end{array}
			\]
			Then $M \models_X^S \psi \vee \theta$: indeed, for $Y = \{s_0, s_1\}$ and $Z = \{s_2, s_3\}$ we have that $X = Y \cup Z$, $Y \cap Z = \emptyset$, $M \models_Y \psi$ and $M \models_Z \theta$, as required. However, the restriction $X'$ of $X$ to $\free(\psi \vee \theta) = \{x,y,z\}$ is the team considered in the proof of Proposition \ref{propo:dislaxneqstr}, and - as was shown in that proof - $M \not \models_X^S \psi \vee \theta$. 
		\item Let $\dom(M) = \{0,1\}$,  let $\xi$ be $y \subseteq x \wedge z \subseteq x$, and let 
			\[
				X = \begin{array}{c | c c c}
					~ & y & z & u\\
					\hline
					s_0 & 0 & 1 & 0\\
					s_1 & 0 & 1 & 1
				\end{array}
			\]
			Then $M \models_X^S \exists x \xi$: indeed, for $F: X \rightarrow \dom(M)$ defined as 
			\begin{align*}
				&F(s_0) = 0;\\
				&F(s_1) = 1;\\
			\end{align*}
			we have that 
			\[
				X[F/x] = \begin{array}{c | c c c c}
					~ & y & z & u & x\\
					\hline
					s'_0 & 0 & 1 & 0 & 0\\
					s'_1 & 0 & 1 & 1 & 1
				\end{array}
			\]
			and it is easy to check that this team satisfies $\xi$. However, the restriction $X'$ of $X$ to $\free(\exists x \xi) = \{y, z\}$ is the team considered in the proof of Proposition \ref{propo:exlaxneqstr}, and - again, as shown in that proof - $M \not \models_X^S \exists x \psi$.
	\end{enumerate}
\end{proof}
\begin{theo}[Inclusion logic with lax semantics is local]
	\label{theo:strict_local}
	Let $M$ be a first order model, let $\phi$ be any inclusion logic formula, and let $V$ be a set of variables with $\free(\phi) \subseteq V$. Then, for all suitable teams $X$,
	\[
	M \models_X^L \phi \Leftrightarrow M \models_{X_{\upharpoonright V}}^L \phi
	\]
\end{theo}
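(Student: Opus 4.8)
The plan is to prove both implications at once by structural induction on $\phi$, keeping the variable set $V$ (subject only to $\free(\phi)\subseteq V\subseteq\dom(X)$) as a parameter that is allowed to grow when a quantifier is crossed. The recurring technical fact I rely on is that restriction commutes with the team operations, together with the \emph{non-injectivity} of the map $s\mapsto s_{\upharpoonright V}$: distinct assignments of $X$ may collapse to one assignment of $X_{\upharpoonright V}$, and the entire difficulty is to show that lax semantics is insensitive to this collapsing. For the base cases, if $\phi$ is a first order literal or an inclusion atom $\tuple t_1\subseteq\tuple t_2$, then every variable occurring in $\phi$ lies in $\free(\phi)\subseteq V$, so $\tuple t\langle s\rangle=\tuple t\langle s_{\upharpoonright V}\rangle$ and hence $X(\tuple t)=X_{\upharpoonright V}(\tuple t)$ for the relevant tuples; since \textbf{TS-atom} and \textbf{TS-inc} depend on $X$ only through such relations, the equivalence is immediate. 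Conjunction is equally direct, applying the induction hypothesis to both conjuncts with the same $V$, as $\free(\psi),\free(\theta)\subseteq V$.

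The disjunction case $\phi=\psi\vee\theta$ is where the lax rule first matters. For the forward direction I would push the restriction through the union, using $(Y\cup Z)_{\upharpoonright V}=Y_{\upharpoonright V}\cup Z_{\upharpoonright V}$ together with the induction hypothesis on $\psi$ and $\theta$. For the converse, given a witnessing split $X_{\upharpoonright V}=Y'\cup Z'$, I would \emph{lift} it by taking preimages under restriction: set $Y=\{s\in X : s_{\upharpoonright V}\in Y'\}$ and $Z=\{s\in X : s_{\upharpoonright V}\in Z'\}$. Then $Y\cup Z=X$, and since $Y',Z'\subseteq X_{\upharpoonright V}$ one checks $Y_{\upharpoonright V}=Y'$ and $Z_{\upharpoonright V}=Z'$, so the induction hypothesis delivers $M\models_Y^L\psi$ and $M\models_Z^L\theta$. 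It is essential that the lax union need not be disjoint, since a single $t\in Y'\cap Z'$ may have several preimages in $X$ that are freely distributed between $Y$ and $Z$.

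The existential case $\phi=\exists x\,\psi$ is the heart of the argument, and I would invoke the induction hypothesis on $\psi$ with the enlarged set $V\cup\{x\}$ (note $\free(\psi)\subseteq\free(\phi)\cup\{x\}\subseteq V\cup\{x\}$). For the forward direction, from a function $H:X\to\part(\dom(M))\setminus\{\emptyset\}$ with $M\models_{X[H/x]}^L\psi$, I define a function $H'$ on $X_{\upharpoonright V}$ by fibre-union, $H'(t)=\bigcup\{H(s) : s\in X,\ s_{\upharpoonright V}=t\}$, which is nonempty because each fibre is nonempty. The key computation is the identity $X_{\upharpoonright V}[H'/x]=(X[H/x])_{\upharpoonright V\cup\{x\}}$, after which the induction hypothesis gives $M\models_{X_{\upharpoonright V}[H'/x]}^L\psi$ and hence $M\models_{X_{\upharpoonright V}}^L\exists x\,\psi$. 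For the converse I would pull a witness $H'$ on $X_{\upharpoonright V}$ back to $H(s)=H'(s_{\upharpoonright V})$ on $X$, for which the same identity $(X[H/x])_{\upharpoonright V\cup\{x\}}=X_{\upharpoonright V}[H'/x]$ holds; the universal case $\forall x\,\psi$ is the deterministic specialization, where $(X[M/x])_{\upharpoonright V\cup\{x\}}=X_{\upharpoonright V}[M/x]$ directly.

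I expect the main obstacle to be precisely this fibre-union step. Because restriction collapses assignments, a single $t\in X_{\upharpoonright V}$ may arise from many $s\in X$ carrying different supplement sets $H(s)$, and one must verify carefully that taking their union reproduces exactly the restricted supplemented team, neither losing nor inventing assignments on $V\cup\{x\}$. This is also the conceptual reason the theorem holds for lax but fails for strict semantics (cf. Proposition \ref{propo:strict_nonlocal}): a single-valued choice $F(s)$ cannot be amalgamated over a fibre without discarding witnesses, whereas a set-valued supplement $H$ absorbs the collapse without difficulty.
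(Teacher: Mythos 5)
Your proposal is correct and follows essentially the same route as the paper: the paper proves this theorem by deferring to Theorem \ref{theo:ielocal} (locality for inclusion/exclusion logic), whose structural induction uses exactly your steps --- preimage-lifting of the splitting for disjunction, the fibre-union function $H'(s') = \bigcup \{H(s) : s \in X,\ s' = s_{\upharpoonright V}\}$ with the identity $X_{\upharpoonright V}[H'/x] = (X[H/x])_{\upharpoonright V \cup \{x\}}$ for the existential, the pullback $H(s) = H'(s_{\upharpoonright V})$ for the converse, and the direct commutation for the universal. Your closing observation about why the fibre-union argument is exactly what fails under strict semantics also matches the paper's discussion surrounding Proposition \ref{propo:strict_nonlocal}.
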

\begin{proof}
	The proof is by structural induction on $\phi$. 

	In Section \ref{subsect:ielogic}, Theorem \ref{theo:ielocal}, we will prove the same result for an extension of inclusion logic; so we refer to that theorem for the details of the proof.
\end{proof}

Since, as we saw, inclusion logic is not downwards closed, by Theorem \ref{theo:DLdc} it is not contained in dependence logic. It is then natural to ask whether dependence logic is contained in inclusion logic, or if dependence and inclusion logic are two incomparable extensions of first order logic. \\

This is answered by the following result, and by its corollary: 
\begin{theo}[Union closure for inclusion logic]
	Let $\phi$ be any inclusion logic formula, let $M$ be a first order model and let $(X_i)_{i \in I}$ be a family of teams with the same domain such that $M \models_{X_i} \phi$ for all $i \in I$. Then, for $X = \bigcup_{i \in I} X_i$, we have that $M \models_X \phi.$
\end{theo}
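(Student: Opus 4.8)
The plan is to proceed by structural induction on $\phi$, throughout using the lax semantics (recall that $\models$ now abbreviates $\models^L$), and to write $V$ for the common domain of the $X_i$. The two atomic cases are direct. If $\phi$ is a first order literal, then by rule \textbf{TS-atom} each $X_i$ satisfies $\phi$ exactly when $M \models_s \phi$ for every $s \in X_i$; since every $s \in X = \bigcup_{i} X_i$ lies in some $X_i$, the literal holds at $s$, and so $M \models_X \phi$. If $\phi$ is an inclusion atom $\tuple t_1 \subseteq \tuple t_2$, take any $s \in X$; then $s \in X_i$ for some $i$, and since $M \models_{X_i} \tuple t_1 \subseteq \tuple t_2$ there is a witness $s' \in X_i \subseteq X$ with $\tuple t_1\langle s\rangle = \tuple t_2\langle s'\rangle$. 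As $s$ was arbitrary, rule \textbf{TS-inc} gives $M \models_X \tuple t_1 \subseteq \tuple t_2$. The crucial point here is that a witness $s'$ supplied inside a single team always remains inside the union.

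For the connectives, conjunction is immediate: from $M \models_{X_i} \psi \wedge \theta$ for all $i$ we get $M \models_{X_i} \psi$ and $M \models_{X_i} \theta$ for all $i$, and the induction hypothesis applied separately to $\psi$ and $\theta$ yields $M \models_X \psi$ and $M \models_X \theta$. For disjunction, each $X_i$ splits as $X_i = Y_i \cup Z_i$ with $M \models_{Y_i} \psi$ and $M \models_{Z_i} \theta$ by rule \textbf{TS-$\vee_L$}. Setting $Y = \bigcup_i Y_i$ and $Z = \bigcup_i Z_i$, both families have domain $V$, so the induction hypothesis gives $M \models_Y \psi$ and $M \models_Z \theta$; since $Y \cup Z = \bigcup_i (Y_i \cup Z_i) = X$, we conclude $M \models_X \psi \vee \theta$. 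This step uses lax disjunction essentially: no disjointness of $Y$ and $Z$ is required, which is exactly what lets us take unions freely.

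The main obstacle is the existential case, since a single assignment $s$ may belong to several of the $X_i$ and thus be assigned several choice sets. For each $i$ fix $H_i : X_i \to \part(\dom(M)) \setminus \{\emptyset\}$ with $M \models_{X_i[H_i/x]} \psi$. I would define one function $H : X \to \part(\dom(M)) \setminus \{\emptyset\}$ by $H(s) = \bigcup \{ H_i(s) : i \in I,\ s \in X_i \}$, which is nonempty because $s$ lies in at least one $X_i$ and each $H_i(s)$ is nonempty. The key computation is that $X[H/x] = \bigcup_i X_i[H_i/x]$: any $s[m/x] \in X[H/x]$ has $m \in H(s)$, hence $m \in H_i(s)$ for some $i$ with $s \in X_i$, so $s[m/x] \in X_i[H_i/x]$; conversely any $s[m/x] \in X_i[H_i/x]$ has $s \in X_i \subseteq X$ and $m \in H_i(s) \subseteq H(s)$. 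The families $(X_i[H_i/x])_{i}$ all have domain $V \cup \{x\}$ and each satisfies $\psi$, so the induction hypothesis yields $M \models_{X[H/x]} \psi$, whence $M \models_X \exists x\, \psi$ by rule \textbf{TS-$\exists_L$}.

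The universal case is the same computation stripped of the choice sets: one checks directly that $X[M/x] = \bigcup_i X_i[M/x]$, and the induction hypothesis finishes it. It is precisely the freedom of the lax rule to let $H(s)$ be an arbitrary nonempty set that makes the combination $\bigcup_i H_i(s)$ admissible; under the strict rule, where each assignment must be sent to a single value, this merging of witnessing functions would break down, in keeping with the failure of locality for the strict semantics recorded in Proposition \ref{propo:strict_nonlocal}. I therefore expect the existential step, and specifically the verification of the identity $X[H/x] = \bigcup_i X_i[H_i/x]$ together with the nonemptiness of $H(s)$, to be where all the real work lies.
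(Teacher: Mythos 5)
Your proof is correct and follows essentially the same route as the paper's: the same structural induction, with the same merged splitting $Y = \bigcup_i Y_i$, $Z = \bigcup_i Z_i$ for disjunction and the same merged choice function $H(s) = \bigcup\{H_i(s) : s \in X_i\}$ for the lax existential, together with the identity $X[H/x] = \bigcup_i X_i[H_i/x]$ (which you verify in more detail than the paper does). Your added observations about why the lax rules are what make the merging work are accurate but do not change the argument.
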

\begin{proof}
	By structural induction on $\phi$.
	\begin{enumerate}
		\item If $\phi$ is a first order literal, this is obvious.
		\item Suppose that $M \models_{X_i} \tuple t_1 \subseteq \tuple t_2$ for all $i \in I$. Then $M \models_X \tuple t_1 \subseteq \tuple t_2$. Indeed, let $s \in X$: then $s \in X_i$ for some $i \in I$, and hence there exists another $s' \in X_i$ with $s'(\tuple t_2) = s(\tuple t_1)$. Since $X_i \subseteq X$ we then have that $s' \in X$, as required. 
		\item Suppose that $M \models_{X_i} \psi \vee \theta$ for all $i \in I$. Then each $X_i$ can be split into two subteams $Y_i$ and $Z_i$ with $M \models_{Y_i} \psi$ and $M \models_{Z_i} \theta$. Now, let $Y = \bigcup_{i \in I} Y_i$ and $Z = \bigcup_{i \in I} Z_i$: by induction hypothesis, $M \models_Y \psi$ and $M \models_Z \theta$. Furthermore, $Y \cup Z = \bigcup_{i \in I} Y_i ~\cup~ \bigcup_{i \in I} Z_i = \bigcup_{i \in I} (Y_i \cup Z_i) = X$, and hence $M \models_X \psi \vee \theta$, as required.
		\item Suppose that $M \models_{X_i} \psi \wedge \theta$ for all $i \in I$. Then for all such $i$, $M \models_{X_i} \psi$ and $M \models_{X_i} \theta$; but then, by induction hypothesis, $M \models_X \psi$ and $M \models_X \theta$, and therefore $M \models_X \psi \wedge \theta$. 
		\item Suppose that $M \models_{X_i} \exists x \psi$ for all $i \in I$, that is, that for all such $i$ there exists a function $H_i: X_i \rightarrow \part(\dom(M)) \backslash \{\emptyset\}$ such that $M \models_{X_i[H_i/x]} \psi$. Then define the function $H : X \rightarrow \part(\dom(M)) \backslash \{\emptyset\}$ so that, for all $s \in X$, $H(s) = \bigcup \{H_i(s): s \in X_i\}$. Now, $X[H/x] = \bigcup_{i \in I} (X_i[H_i/x])$, and hence by induction hypothesis $M \models_{X[H/x]} \psi$, and therefore $M \models_X \exists x \psi$. 
		\item Suppose that $M \models_{X_i} \forall x \psi$ for all $i \in I$, that is, that $M \models_{X_i[M/x]} \psi$ for all such $i$. Then, since $\bigcup_{i \in I} (X_i[M/x]) = \left(\bigcup_{i \in I} X_i\right)[M/x] = X[M/x]$, by induction hypothesis $M \models_{X[M/x]} \psi$ and therefore $M \models_X \forall x \psi$, as required.
	\end{enumerate}
\end{proof}
\begin{coro}
	There exist constancy logic formulas which are not equivalent to any inclusion logic formula.
\end{coro}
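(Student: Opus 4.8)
The plan is to leverage the union closure property of inclusion logic just established. Since that theorem shows that the class of teams satisfying any given inclusion logic formula is closed under arbitrary unions of teams with a common domain, it suffices to exhibit a single constancy logic formula whose associated class of teams fails this closure property. Any such formula is then automatically inequivalent to every inclusion logic formula.

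The natural witness is the constancy atom $=\!\!(x)$ itself, which is a formula of constancy logic. First I would fix a model $M$ with $\dom(M) = \{0,1\}$ (recall that all models in this paper are assumed to have at least two elements) and consider the two singleton teams $Y = \{(x:0)\}$ and $Z = \{(x:1)\}$, each with domain $\{x\}$. By rule \textbf{TS-const}, both satisfy $=\!\!(x)$ vacuously, since the constancy condition quantifies over pairs $s, s'$ drawn from the team and each of these teams is a singleton, so the condition $x\langle s\rangle = x\langle s'\rangle$ holds trivially. Their union $X = Y \cup Z = \{(x:0),(x:1)\}$, however, contains two assignments disagreeing on the value of $x$, and hence $M \not\models_X =\!\!(x)$.

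This exhibits a constancy logic formula that is not union-closed, whereas by the union closure theorem every inclusion logic formula \emph{is} union-closed. Consequently $=\!\!(x)$ cannot be logically equivalent to any inclusion logic formula, which is precisely the assertion of the corollary. The only point that requires care — and it is slight — is that ``equivalent'' must be read as defining the same class of teams (at least over the model exhibited, and a fortiori over all models), so that the discrepancy in closure behavior genuinely blocks equivalence; this is immediate, since the union closure theorem holds uniformly over all models and teams while the counterexample already arises over a single two-element model. I do not expect any real obstacle here: the argument is a direct application of the preceding theorem together with the observation that constancy atoms detect variation across a team and are therefore sensitive to enlargements of it.
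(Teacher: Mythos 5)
Your proof is correct and follows essentially the same route as the paper: both invoke the union closure theorem for inclusion logic and then exhibit the constancy atom $=\!\!(x)$ together with the singleton teams $\{(x:0)\}$ and $\{(x:1)\}$, whose union fails to satisfy the atom. No gaps; the argument is complete as stated.
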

\begin{proof}
	This follows at once from the fact that the constancy atom $=\!\!(x)$ is not closed under unions.\\

	Indeed, let $M$ be any model with two elements $0$ and $1$ in its domain, and consider the two teams $X_0 = \{(x:0)\}$ and $X_1 = \{(x:1)\}$: then $M \models_{X_0} =\!\!(x)$ and $M \models_{X_1} =\!\!(x)$, but $M \not \models_{X_0 \cup X_1} =\!\!(x)$. 
\end{proof}
Therefore, not only inclusion logic does not contain dependence logic, it does not even contain constancy logic!\\

Now, by Theorem \ref{theo:DL2IL} we know that dependence logic is properly contained in independence logic. As the following result shows, inclusion logic is also (properly, because dependence atoms are expressible in independence logic) contained in independence logic: 
\begin{theo}
	\label{theo:Inc2IL}
	Inclusion atoms are expressible in terms of independence logic formulas. More precisely, an inclusion atom $\tuple t_1 \subseteq \tuple t_2$ is equivalent to the independence logic formula
\[
	\phi := \forall v_1 v_2 \tuple z ((\tuple z \not = \tuple t_1 \wedge \tuple z \not = \tuple t_2) \vee (v_1 \not = v_2 \wedge \tuple z \not = \tuple t_2) \vee ((v_1 = v_2 \vee \tuple z = \tuple t_2) \wedge \indepc{\tuple z}{v_1 v_2})).
\]
where $v_1$, $v_2$ and $\tuple z$ do not occur in $\tuple t_1$ or $\tuple t_2$ and where, as in \cite{gradel10}, $\indepc{\tuple z}{v_1 v_2}$ is a shorthand for $\indep{\emptyset}{\tuple z}{v_1v_2}$.
\end{theo}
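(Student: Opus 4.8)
The plan is to prove the equivalence by unwinding the team semantics of $\phi$ in both directions. Since the three outermost universal quantifiers supply every possible value, $M \models_X \phi$ holds iff $M \models_{X'} \Psi$ where $X' = X[M/v_1 v_2 \tuple z]$ and $\Psi$ is the ternary disjunction. By the lax rule for $\vee$, this holds iff $X'$ can be covered by three teams $X' = Y_1 \cup Y_2 \cup Y_3$ with $Y_1$ satisfying $\tuple z \neq \tuple t_1 \wedge \tuple z \neq \tuple t_2$, with $Y_2$ satisfying $v_1 \neq v_2 \wedge \tuple z \neq \tuple t_2$, and with $Y_3$ satisfying $(v_1 = v_2 \vee \tuple z = \tuple t_2) \wedge \indepc{\tuple z}{v_1 v_2}$. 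The first two disjuncts and the left conjunct of the third are first order, hence flat (Proposition \ref{propo:FOflat}), so they reduce to pointwise constraints on the three teams; the only genuinely global constraint is the independence atom, which by \textbf{TS-indep} says precisely that $Y_3(\tuple z v_1 v_2) = Y_3(\tuple z) \times Y_3(v_1 v_2)$. I will assume $X \neq \emptyset$ throughout, the empty case being trivial, and use that $\dom(M)$ has at least two elements $0 \neq 1$.

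For the direction $\Rightarrow$, assume $X(\tuple t_1) \subseteq X(\tuple t_2)$ and exhibit an explicit partition of $X'$. For $s \in X'$ with $\vec c = \tuple z\langle s\rangle$, $\vec a_1 = \tuple t_1\langle s\rangle$, $\vec a_2 = \tuple t_2\langle s\rangle$, place $s$ in $Y_1$ if $\vec c \neq \vec a_1$ and $\vec c \neq \vec a_2$; otherwise in $Y_3$ if $\vec c = \vec a_2$; otherwise (so $\vec c = \vec a_1 \neq \vec a_2$) in $Y_2$ when $v_1\langle s\rangle \neq v_2\langle s\rangle$ and in $Y_3$ when $v_1\langle s\rangle = v_2\langle s\rangle$. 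The pointwise conditions for $Y_1$, for $Y_2$, and for the left conjunct of the third disjunct hold by construction. The key check is the independence atom on $Y_3$: the assignments with $\tuple z = \tuple t_2$ already give $Y_3(\tuple z v_1 v_2) \supseteq X(\tuple t_2) \times \dom(M)^2$, and since $X(\tuple t_1) \subseteq X(\tuple t_2)$ the extra assignments with $\tuple z = \tuple t_1 \neq \tuple t_2$ (which carry only diagonal pairs $v_1 = v_2$) contribute nothing new to $Y_3(\tuple z)$ or $Y_3(v_1 v_2)$; hence $Y_3(\tuple z) = X(\tuple t_2)$, $Y_3(v_1 v_2) = \dom(M)^2$, and $Y_3(\tuple z v_1 v_2)$ is exactly their product, as required.

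For the direction $\Leftarrow$, assume $M \models_{X'} \Psi$ via a cover $Y_1, Y_2, Y_3$, fix $\vec a \in X(\tuple t_1)$, and argue by contradiction that $\vec a \in X(\tuple t_2)$. Suppose $\vec a \notin X(\tuple t_2)$. First, choosing $s_0 \in X$ with $\tuple t_1\langle s_0\rangle = \vec a$ and extending it by $\tuple z = \vec a$ and $v_1 = v_2 = 0$ yields an assignment lying in neither $Y_1$ (its $\tuple z$ equals $\tuple t_1$) nor $Y_2$ (its $v_1 = v_2$), so it lies in $Y_3$ and witnesses $\vec a \in Y_3(\tuple z)$. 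Second, any $s \in Y_3$ with $\tuple z\langle s\rangle = \vec a$ has $\tuple z \neq \tuple t_2$ (because $\tuple t_2\langle s\rangle \in X(\tuple t_2)$ while $\vec a \notin X(\tuple t_2)$), so the flat left conjunct forces $v_1\langle s\rangle = v_2\langle s\rangle$. The rectangle inclusion $\{\vec a\} \times Y_3(v_1 v_2) \subseteq Y_3(\tuple z v_1 v_2)$ then forces every pair of $Y_3(v_1 v_2)$ to have the form $(b,b)$, i.e. every assignment of $Y_3$ satisfies $v_1 = v_2$. But then, taking any $s_1 \in X$ and extending it by $\tuple z = \tuple t_2\langle s_1\rangle$ and $v_1 = 0$, $v_2 = 1$, we obtain an assignment of $X'$ with $\tuple z = \tuple t_2$ and $v_1 \neq v_2$: it is barred from $Y_1$ and $Y_2$ (both require $\tuple z \neq \tuple t_2$) and from $Y_3$ (which now holds only $v_1 = v_2$ assignments), contradicting $X' = Y_1 \cup Y_2 \cup Y_3$.

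I expect the backward direction to be the main obstacle, and within it the crucial, least mechanical step is to combine the flatness of the left conjunct of the third disjunct with the rectangular shape imposed by $\indepc{\tuple z}{v_1 v_2}$: the pair $(v_1, v_2)$ acts as a one-bit flag (``equal'' versus ``different'') whose independence from $\tuple z$ propagates a purely local obstruction at the single slice $\tuple z = \vec a$ into a global one that makes the $\tuple z = \tuple t_2$, $v_1 \neq v_2$ assignments impossible to place. The forward direction, by contrast, is a routine verification once the right three-way partition has been written down.
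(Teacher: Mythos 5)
Your proof is correct and follows essentially the same route as the paper's: the same unwinding of the universal quantifiers into $X' = X[M/v_1 v_2 \tuple z]$ and the lax ternary disjunction, the same third team (the paper's $W$ is exactly your $Y_3$), and the same two key witness assignments ($\tuple z := \tuple t_1\langle s\rangle$ with $v_1 = v_2$, and $\tuple z := \tuple t_2\langle s\rangle$ with $v_1 \not= v_2$) combined through the rectangle reading of $\indepc{\tuple z}{v_1 v_2}$. The only differences are presentational: in the forward direction you verify the independence atom by computing projections ($Y_3(\tuple z\, v_1 v_2) = X(\tuple t_2) \times \dom(M)^2$) where the paper chases witnesses pairwise, and in the backward direction you argue by contradiction (propagating the diagonal constraint over all of $Y_3$ and orphaning the $\tuple z = \tuple t_2$, $v_1 \not= v_2$ assignment) where the paper directly extracts the witness $s'' \in X$ with $\tuple t_2\langle s''\rangle = \tuple t_1\langle s\rangle$.
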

\begin{proof}
Suppose that $M \models_X \tuple t_1 \subseteq \tuple t_2$. Then split the team $X' = X[M/v_1 v_2 \tuple z]$ into three teams $Y$, $Z$ and $W$ as follows: 
\begin{itemize}
\item $Y = \{ s \in X' : s(\tuple z) \not = \tuple t_1\langle s\rangle \mbox{ and } s(\tuple z) \not = \tuple t_2\langle s\rangle\}$;
\item $Z = \{ s \in X' : s(v_1) \not = s(v_2) \mbox{ and } s(\tuple z) \not = \tuple t_2\langle s\rangle\}$;
\item $W = X' \backslash (Y \cup Z) = \{s \in X' : s(\tuple z) = \tuple t_2\langle s\rangle \mbox{ or } (s(\tuple z) = \tuple t_1\langle s\rangle \mbox{ and } s(v_1) = s(v_2))\}$. 
\end{itemize}
Clearly, $X' = Y \cup Z \cup W$, $M \models_Y z \not = t_1 \wedge z \not = t_2$ and $M \models_Z v_1 \not = v_2 \wedge z \not = t_2$; hence, if we can prove that 
\[
	M \models_W ((v_1 = v_2 \vee \tuple z = \tuple t_2)) \wedge \indepc{\tuple z}{v_1 v_2}
\]
we can conclude that $M \models_X \phi$, as required. 

Now, suppose that $s \in W$ and $s(v_1) \not = s(v_2)$: then necessarily $s(\tuple z) = \tuple t_2$, since otherwise we would have that $s \in Z$ instead. Hence, the first conjunct $v_1 = v_2 \vee \tuple z = \tuple t_2$ is satisfied by $W$.

Now, consider two assignments $s$ and $s'$ in $W$: in order to conclude this direction of the proof, we need to show that there exists a $s'' \in W$ such that $s''(\tuple z) = s(\tuple z)$ and $s''(v_1 v_2) = s'(v_1 v_2)$. There are two distinct cases to examine: 
\begin{enumerate}
	\item If $s(\tuple z) = \tuple t_2 \langle s \rangle$, consider the assignment 
		\[
			s'' = s[s'(v_1)/v_1][s'(v_2)/v_2]:
		\]
		by construction, $s'' \in X'$. Furthermore, since $s''(\tuple z) = \tuple t_2\langle s\rangle = \tuple t_2\langle s''\rangle$, $s''$ is neither in $Y$ nor in $Z$. Hence, it is in $W$, as required. 
	\item If $s(\tuple z) \not = \tuple t_2\langle s\rangle$ and $s \in W$, then necessarily $s(\tuple z) = \tuple t_1\langle s \rangle$ and $s(v_1) = s(v_2)$. 

	Since $s \in W \subseteq X[M/v_1 v_2 \tuple z]$, there exists an assignment $o \in X$ such that 
	\[
		\tuple t_1\langle o\rangle = \tuple t_1 \langle s\rangle = s(\tuple z);
	\]
	and since $M \models_X \tuple t_1 \subseteq \tuple t_2$, there also exist an assignment $o' \in X$ such that 
	\[
		\tuple t_2\langle o'\rangle = \tuple t_1\langle o\rangle = s(\tuple z).
	\]

	Now consider the assignment $s'' = o'[s'(v_1)/v_1][s'(v_2)/v_2][s(\tuple z)/\tuple z]$: by construction, $s'' \in X'$, and since 
	\[
		s''(\tuple z) = s(\tuple z) = \tuple t_2 \langle o'\rangle = \tuple t_2\langle s''\rangle
	\]
	we have that $s'' \in W$, that $s''(\tuple z) = s(\tuple z)$ and that $s''(v_1 v_2) = s'(v_1 v_2)$, as required. 
\end{enumerate}

Conversely, suppose that $M \models_X \phi$, let $0$ and $1$ be two distinct elements of the domain of $M$, and let $s \in X$.

By the definition of $\phi$, the fact that $M \models_X \phi$ implies that the team $X[M/v_1v_2\tuple z]$ can be split into three teams $Y$, $Z$ and $W$ such that 
\begin{align*}
	& M \models_Y \tuple z \not = \tuple t_1 \wedge \tuple z \not = \tuple t_2;\\
	& M \models_Z v_1 \not = v_2 \wedge \tuple z \not = \tuple t_2;\\
	& M \models_W (v_1 = v_2 \vee \tuple z = \tuple t_2) \wedge \indepc{\tuple z}{v_1 v_2}.
\end{align*}
 Then consider the assignments 
\[
	h = s[0/v_1][0/v_2][\tuple t_1\langle s\rangle/\tuple z]
\]
and 
\[
	h' = s[0/v_1][1/v_2][\tuple t_2\langle s\rangle/\tuple z]
\]
Clearly, $h$ and $h'$ are in $X[M/v_1v_2\tuple z]$. However, neither of them is in $Y$, since $h(\tuple z) = \tuple t_1\langle h\rangle$ and $h'(\tuple z) = \tuple t_2\langle h'\rangle$, nor in $Z$, since $h(v_1) = h(v_2)$ and, again, since $h'(\tuple z) = \tuple t_2\langle h'\rangle$. Hence, both of them are in $W$. 

But we know that $M \models_W \indepc{\tuple z}{v_1 v_2}$, and thus there exists an assignment $h'' \in W$ with 
\[
	h''(\tuple z) = h(\tuple z) = \tuple t_1 \langle s \rangle
\]
and 
\[
	h''(v_1 v_2) = h'(v_1 v_2) = 01.
\]

Now, since $h''(v_1) \not = h''(v_2)$, since $h'' \in W$ and since
\[
	M \models_W v_1 = v_2 \vee \tuple z = \tuple t_2,	
\]
it must be the case that $h''(\tuple z) = \tuple t_2\langle h''\rangle$. 

Finally, this $h''$ corresponds to some $s'' \in X$; and for this $s''$,
\[
	\tuple t_2\langle s''\rangle = \tuple t_2\langle h''\rangle =  h''(\tuple z) = h(\tuple z) = \tuple t_1\langle s\rangle.
\]
This concludes the proof.
\end{proof}

The relations between first order (team) logic, constancy logic, dependence logic, inclusion logic and independence logic discovered so far are then represented by Figure \ref{fig:logrel1}.
\begin{figure}
\begin{center}
	\epsfig{file=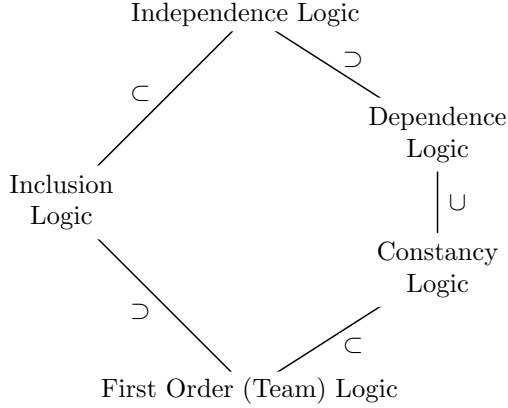}
\end{center}
\caption{Translatability relations between logics (wrt formulas)}
	\label{fig:logrel1}
\end{figure}$\\$

However, things change if we take in consideration the the expressive power of these logics with respect to their sentences only. Then, as we saw, first order logic and constancy logic have the same expressive power, in the sense that every constancy logic formula is equivalent to some first order formula and vice versa, and so do dependence and independence logic. What about inclusion logic sentences? 

At the moment, relatively little is known by the author about this. In essence, all that we know is the following result: 
\begin{propo}
Let $\psi(\tuple x, \tuple y)$ be any first order formula, where $\tuple x$ and $\tuple y$ are tuples of disjoint variables of the same arity. Furthermore, let $\psi'(\tuple x, \tuple y)$ be the result of writing $\lnot \psi(\tuple x, \tuple y)$ in negation normal form. Then, for all suitable models $M$ and all suitable pairs $\tuple a$, $\tuple b$ of constant terms of the model, 
\[
	M \models_{\{\emptyset\}} \exists \tuple z (\tuple a \subseteq \tuple z  \wedge \tuple z \not = \tuple b \wedge \forall \tuple w(\psi'(\tuple z, \tuple w) \vee \tuple w \subseteq \tuple z))
\]
if and only if $M \models \lnot [\traclo_{\tuple x, \tuple y} ~\psi](\tuple a, \tuple b)$, that is, if and only if the pair of tuples of elements corresponding to $(\tuple a, \tuple b)$ is not in the transitive closure of $\{(\tuple m_1, \tuple m_2) : M \models \psi(\tuple m_1, \tuple m_2)\}$.
\end{propo}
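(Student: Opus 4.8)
The plan is to unfold the (lax) team semantics of the displayed formula over the singleton team $\{\emptyset\}$ and reduce the whole statement to a purely combinatorial closure condition on a set of tuples. Write $R = \{(\tuple m_1, \tuple m_2) : M \models \psi(\tuple m_1, \tuple m_2)\}$ and let $n = |\tuple z| = |\tuple w|$. Applying \textbf{TS-}$\exists_L$ repeatedly to $\{\emptyset\}$ amounts to choosing a nonempty set $A \subseteq \dom(M)^n$ and passing to the team $X = \{(\tuple z : \tuple m) : \tuple m \in A\}$, whose only recorded values are those of $\tuple z$, so that $\rel(X) = A$; conversely every nonempty $A$ arises this way. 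I would first dispatch the two easy conjuncts: by \textbf{TS-inc} (Definition \ref{def:inc_exc}) the atom $\tuple a \subseteq \tuple z$ holds in $X$ exactly when the value of the constant tuple $\tuple a$ lies in $A$, and since $\tuple z \not= \tuple b$ is first order, Proposition \ref{propo:FOflat} gives that it holds in $X$ exactly when the value of $\tuple b$ is \emph{not} in $A$.

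The heart of the argument is the conjunct $\forall \tuple w (\psi'(\tuple z, \tuple w) \vee \tuple w \subseteq \tuple z)$. By \textbf{TS-}$\forall$ we move to the team $X[M/\tuple w] = \{(\tuple z : \tuple u, \tuple w : \tuple v) : \tuple u \in A,\ \tuple v \in \dom(M)^n\}$, and by \textbf{TS-}$\vee_L$ we must split it as $Y \cup Z$ with $M \models_Y \psi'(\tuple z, \tuple w)$ and $M \models_Z \tuple w \subseteq \tuple z$. Since $\psi'$ is the negation normal form of $\lnot \psi$ and is first order, Proposition \ref{propo:FOflat} forces $Y$ to contain only assignments $s$ with $(s(\tuple z), s(\tuple w)) \notin R$; hence every assignment $(\tuple z : \tuple u, \tuple w : \tuple v)$ with $\tuple u \in A$ and $(\tuple u, \tuple v) \in R$ must lie in $Z$. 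For such an assignment \textbf{TS-inc} then demands some $s' \in Z$ with $s'(\tuple z) = \tuple v$, and as every $\tuple z$-value occurring in $X[M/\tuple w]$ belongs to $A$, this yields $\tuple v \in A$. I would conclude that a valid split exists for a given $A$ if and only if $A$ is closed under $R$-successors, i.e. $\tuple u \in A$ and $(\tuple u, \tuple v) \in R$ imply $\tuple v \in A$. The forward direction is the implication just sketched; for the converse, assuming $A$ forward closed, I would exhibit the split explicitly by putting into $Z$ both the block $\{(\tuple z : \tuple u, \tuple w : \tuple v) : \tuple u, \tuple v \in A\}$ and all the forced $R$-pairs, and letting $Y$ be the remainder. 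Forward closure guarantees $Y$ contains no $R$-pair, while the block makes every element of $A$ available as a $\tuple z$-value, so that \textbf{TS-inc} is satisfied in $Z$.

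Putting the three analyses together, the formula holds in $\{\emptyset\}$ if and only if there is a forward-$R$-closed set $A$ with $\tuple a \in A$ and $\tuple b \notin A$. The final step is the observation that the least forward-closed set containing $\tuple a$ is exactly the set of tuples reachable from $\tuple a$ along $R$ (together with $\tuple a$ itself), so such an $A$ avoiding $\tuple b$ exists precisely when $\tuple b$ is unreachable from $\tuple a$, which is to say $(\tuple a, \tuple b)$ is not in the transitive closure of $R$, i.e. $M \models \lnot[\traclo_{\tuple x, \tuple y}~\psi](\tuple a, \tuple b)$.

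The step I expect to demand the most care is the converse of the third-conjunct analysis: the split must be arranged so that $Z$ \emph{simultaneously} absorbs every forced $R$-pair and realizes every member of $A$ as a $\tuple z$-value, and this is possible only because we use the \emph{lax} disjunction rule \textbf{TS-}$\vee_L$, which permits $Y$ and $Z$ to overlap; under the strict rule \textbf{TS-}$\vee_S$ the construction would break, consistently with the failure of locality recorded in Proposition \ref{propo:strict_nonlocal}. A secondary point to pin down is the exact reading of $\traclo$ (reflexive versus irreflexive), since whether $\tuple a$ is automatically placed in the reachable closure governs the boundary case $\tuple a = \tuple b$; I would adopt the reflexive-transitive reading so that this case matches the behaviour forced by the conjunction of $\tuple a \in A$ and $\tuple b \notin A$.
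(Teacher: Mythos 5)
Your proof is correct and follows essentially the same route as the paper's own: unfold the lax existential into the choice of a nonempty set $A$ of values for $\tuple z$, reduce the first two conjuncts to $\tuple a \in A$ and $\tuple b \notin A$, reduce the universally quantified disjunction to forward closure of $A$ under the relation defined by $\psi$, and observe that such an $A$ exists precisely when $(\tuple a, \tuple b)$ lies outside the transitive closure. You are in fact more thorough than the paper in two places: the converse construction of the split (which the paper dismisses with ``it is easy to verify'') is spelled out explicitly, and your remark on reading $\traclo$ reflexively settles the boundary case $\tuple a = \tuple b$, which the paper passes over silently and which is needed for the stated equivalence to be exact.

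One side claim in your closing paragraph is wrong, although it does not damage the proof. You assert that the split requires the lax rule \textbf{TS-}$\vee_L$ because $Y$ and $Z$ must overlap; but in your own construction they do not overlap. Forward closure of $A$ places every forced $R$-pair inside the block $\{(\tuple z : \tuple u, \tuple w : \tuple v) : \tuple u, \tuple v \in A\}$, so $Z$ is exactly this block and $Y$, being the remainder, is disjoint from it; the identical split therefore also witnesses the strict disjunction \textbf{TS-}$\vee_S$. The laxness that is genuinely indispensable in this proposition sits in the existential rule \textbf{TS-}$\exists_L$: under \textbf{TS-}$\exists_S$ the team $\{\emptyset\}[\tuple F/\tuple z]$ would be a singleton, the atom $\tuple a \subseteq \tuple z$ would force $A = \{\tuple a\langle\,\rangle\}$, and the sentence would then express the much weaker condition that $\tuple a \neq \tuple b$ and every $R$-successor of $\tuple a$ equals $\tuple a$ itself, rather than non-reachability.
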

\begin{proof}
Suppose that $M \models_{\{\emptyset\}} \exists \tuple z (\tuple a \subseteq \tuple z  \wedge \tuple z \not = \tuple b \wedge \forall \tuple w(\psi'(\tuple z, \tuple w) \vee \tuple w \subseteq \tuple z))$. Then, by definition, there exists a tuple of functions $\tuple H = H_1 \ldots H_n$ such that 
\begin{enumerate}
	\item $M \models_{\{\emptyset\}[\tuple H/\tuple z]} \tuple a \subseteq \tuple z$, that is, $\tuple a \in \tuple H(\{\emptyset\})$;
	\item $M \models_{\{\emptyset\}[\tuple H/\tuple z]} \tuple z \not = \tuple b$, and therefore $\tuple b \not \in \tuple H(\{\emptyset\})$;
	\item $M \models_{\{\emptyset\}[\tuple H/\tuple z][\tuple M/\tuple w]} \psi'(\tuple z, \tuple w) \vee \tuple w \subseteq \tuple z$.
\end{enumerate}
Now, the third condition implies that whenever $M \models \psi(\tuple m_1, \tuple m_2)$ and $\tuple m_1$ is in $\tuple H(\{\emptyset\})$, $\tuple m_2$ is in $\tuple H(\{\emptyset\})$ too. Indeed, let $Y = \{\emptyset\}[\tuple H/\tuple z][\tuple M/\tuple w]$: then, by the semantics of our logic, we know that $Y = Y_1 \cup Y_2$ for two subteams $Y_1$ and $Y_2$ such that $M \models_{Y_1} \psi'(\tuple z, \tuple w)$ and $M \models_{Y_2} \tuple w \subseteq \tuple z$. But $\psi'$ is logically equivalent to the negation of $\psi$, and therefore we know that, for all $s \in Y_1$, $M \not \models \psi(s(\tuple z), s(\tuple w))$ in the usual Tarskian semantics.\\ 

Suppose now that $\tuple m_1 \in \tuple H(\{\emptyset\})$ and that $M \models \psi(\tuple m_1, \tuple m_2)$. Then $s = (\tuple z:=\tuple m_1, \tuple w:=\tuple m_2)$ is in $Y$; but it cannot be in $Y_1$, as we saw, and hence it must belong to $Y_2$. But $M \models_{Y_2} \tuple w \subseteq \tuple z$, and therefore there exists another assignment $s' \in Y_2$ such that $s'(\tuple z) = s(\tuple w) = \tuple m_2$. But we necessarily have that $s'(\tuple z) \in \tuple H(\{\emptyset\})$, and therefore $\tuple m_2 \in \tuple H(\{\emptyset\})$, as required.

So, $\tuple H(\{\emptyset\})$ is an set of tuples of elements of our models which contains the interpretation of $\tuple a$ but not that of $\tuple b$ and such that 
\[
	\tuple m_1 \in H(\{\emptyset\}), M \models \psi(\tuple m_1), \tuple M_2 \Rightarrow \tuple m_2 \in H(\{\emptyset\}).
\] 
This implies that $M \models \lnot [\traclo_{\tuple x, \tuple y} ~\psi](\tuple a, \tuple b)$, as required.\\

Conversely, suppose that $M \models \lnot [\traclo_{\tuple x, \tuple y} ~\psi](\tuple a, \tuple b)$: then there exists a set $A$ of tuples of elements of the domain of $M$ which contains the interpretation of $\tuple a$ but not that of $\tuple b$, and such that it is closed by transitive closure for $\psi(\tuple x, \tuple y)$. Then, by choosing the functions $\tuple H$ so that $\tuple h(\{\emptyset\}) = A$, it is easy to verify that $M$ satisfies our inclusion logic sentence.
\end{proof}
As a corollary, we have that inclusion logic is strictly more expressive than first order logic over sentences: for example, for all finite linear orders $M = (\dom(M), <, S, 0, e)$, where $S$ is the successor function, $0$ is the first element of the linear order and $e$ is the last one, we have that 
\[
	M \models \exists z (0 \subseteq z \wedge z \not = e \wedge \forall w ( w \not = S(S(z)) \vee w \subseteq z))
\]
if and only if $|M|$ is odd. It is not difficult to see, for example through the Ehrenfeucht-Fra\"iss\'e method (\cite{hodges97b}), that this property is not expressible in first order logic. 
\subsection{Equiextension logic}
\label{subsect:equilog}
Let us now consider \emph{equiextension logic}, that is, the logic obtained by adding to first order logic (with the lax team semantics) equiextension atoms $\tuple t_1 \bowtie \tuple t_2$ with the semantics of Definition \ref{def:equi}.

It is easy to see that equiextension logic is contained in inclusion logic: 
\begin{propo}
	Let $\tuple t_1$ and $\tuple t_2$ be any two tuples of terms of the same length. Then, for all suitable models $M$ and teams $X$, 
	\[
		M \models_X \tuple t_1 \bowtie \tuple t_2 \Leftrightarrow M \models_X \tuple t_1 \subseteq \tuple t_2 \wedge \tuple t_2 \subseteq \tuple t_1. 
	\]
\end{propo}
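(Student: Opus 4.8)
The plan is to reduce the claimed equivalence to the elementary set-theoretic fact that two sets coincide if and only if each is contained in the other, after unwinding the three semantic rules involved. First I would translate the inclusion atoms into statements about the relations $X(\tuple t_1)$ and $X(\tuple t_2)$ of Definition~\ref{defin:relteams}. Reading rule \textbf{TS-inc} directly, $M \models_X \tuple t_1 \subseteq \tuple t_2$ asserts that for every $s \in X$ there is some $s' \in X$ with $\tuple t_1\langle s\rangle = \tuple t_2\langle s'\rangle$; since $X(\tuple t_1) = \{\tuple t_1\langle s\rangle : s \in X\}$ and $X(\tuple t_2) = \{\tuple t_2\langle s'\rangle : s' \in X\}$, this says precisely that every element of $X(\tuple t_1)$ lies in $X(\tuple t_2)$, i.e.\ that $X(\tuple t_1) \subseteq X(\tuple t_2)$. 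Symmetrically, $M \models_X \tuple t_2 \subseteq \tuple t_1$ is equivalent to $X(\tuple t_2) \subseteq X(\tuple t_1)$.

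Next I would handle the conjunction. By rule \textbf{TS-$\wedge$}, the right-hand side $M \models_X \tuple t_1 \subseteq \tuple t_2 \wedge \tuple t_2 \subseteq \tuple t_1$ holds if and only if both conjuncts hold in the same team $X$, so, combining with the previous step, if and only if $X(\tuple t_1) \subseteq X(\tuple t_2)$ \emph{and} $X(\tuple t_2) \subseteq X(\tuple t_1)$. On the other hand, rule \textbf{TS-equ} says that the left-hand side $M \models_X \tuple t_1 \bowtie \tuple t_2$ holds exactly when $X(\tuple t_1) = X(\tuple t_2)$. The desired biconditional is then just the observation that $A = B$ iff $A \subseteq B$ and $B \subseteq A$, applied to $A = X(\tuple t_1)$ and $B = X(\tuple t_2)$.

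I do not expect any genuine obstacle: the argument is a direct unfolding of definitions with no induction and no appeal to locality, downward closure, or any earlier theorem. The only point worth flagging explicitly is that none of the three rules \textbf{TS-equ}, \textbf{TS-inc}, \textbf{TS-$\wedge$} is sensitive to the lax/strict distinction of Definition~\ref{def:altsem} (which affects only disjunction and existential quantification), so the equivalence holds uniformly under $\models$, $\models^L$, and $\models^S$ alike, and in particular under the convention $\models = \models^L$ fixed after Proposition~\ref{propo:laxeqstrict}.
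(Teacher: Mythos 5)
Your proof is correct and is exactly the argument the paper has in mind: the paper's own proof is simply ``Obvious,'' and your unfolding of \textbf{TS-equ}, \textbf{TS-inc}, and \textbf{TS-$\wedge$} into the set-theoretic fact that $X(\tuple t_1) = X(\tuple t_2)$ iff $X(\tuple t_1) \subseteq X(\tuple t_2)$ and $X(\tuple t_2) \subseteq X(\tuple t_1)$ is precisely the intended justification. Your additional remark that none of the rules involved is sensitive to the lax/strict distinction is accurate and a worthwhile clarification, though the paper does not bother to make it.
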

\begin{proof}
	Obvious.
\end{proof}

Translating in the other direction, however, requires a little more care:
\begin{propo}
	Let $\tuple t_1$ and $\tuple t_2$ be any two tuples of terms of the same length. Then, for all suitable models $M$ and teams $X$, $M \models_X \tuple t_1 \subseteq \tuple t_2$ if and only if 
	\[
		M \models_X \forall u_1 u_2 \exists \tuple z (\tuple t_2 \bowtie \tuple z \wedge (u_1 \not = u_2 \vee \tuple z = \tuple t_1))
	\]
	where $u_1, u_2$ and $\tuple z$ do not occur in $\tuple t_1$ and $\tuple t_2$.
\end{propo}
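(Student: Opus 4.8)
The plan is to unfold the team semantics of the right-hand side and exploit the interplay between the universal variables $u_1, u_2$ and the fresh tuple $\tuple z$. Write $X' = X[M/u_1][M/u_2]$, and note first that, since $u_1$, $u_2$ and $\tuple z$ do not occur in $\tuple t_1$ or $\tuple t_2$, for any extension $X''$ of $X'$ by $\tuple z$ we have $X''(\tuple t_2) = X(\tuple t_2)$, and $\tuple t_1\langle s''\rangle = \tuple t_1\langle s\rangle$ whenever $s''$ restricts to $s \in X$. The governing idea is that the \emph{diagonal} part of $X'$, where $u_1$ and $u_2$ take the same value, cannot satisfy the disjunct $u_1 \not= u_2$ and is therefore forced to satisfy $\tuple z = \tuple t_1$; this pins $X(\tuple t_1)$ inside $X''(\tuple z)$, while the equiextension atom $\tuple t_2 \bowtie \tuple z$ forces $X''(\tuple z) = X(\tuple t_2)$. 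Chaining these two facts gives $X(\tuple t_1) \subseteq X(\tuple t_2)$, which is exactly the condition of \textbf{TS-inc}.

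For the direction from the right-hand side to the inclusion atom, I would take a witnessing choice function $H$ and a splitting $X'' = P \cup Q$ with $M \models_P u_1 \not= u_2$ and $M \models_Q \tuple z = \tuple t_1$. Fix $s \in X$ and any $a \in \dom(M)$, and consider the diagonal assignment $s[a/u_1][a/u_2] \in X'$; each of its $\tuple z$-extensions lies in $X''$ but not in $P$, hence in $Q$, so its $\tuple z$-value equals $\tuple t_1\langle s\rangle$. Thus $\tuple t_1\langle s\rangle \in X''(\tuple z)$, and since $M \models_{X''} \tuple t_2 \bowtie \tuple z$ gives $X''(\tuple z) = X''(\tuple t_2) = X(\tuple t_2)$, we obtain $\tuple t_1\langle s\rangle \in X(\tuple t_2)$ for every $s \in X$, as required.

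For the converse I would exhibit an explicit $H$. On the diagonal assignments set $H$ to the singleton $\{\tuple t_1\langle s\rangle\}$ (forced, and nonempty); on the off-diagonal assignments, where $u_1 \not= u_2$, set $H$ to the whole set $X(\tuple t_2)$. Taking $P$ to be the off-diagonal extensions and $Q$ the diagonal ones then satisfies the disjunction, since on $Q$ we have arranged $\tuple z = \tuple t_1$. For the equiextension it remains to check $X''(\tuple z) = X(\tuple t_2)$: the diagonal contributes exactly $X(\tuple t_1)$ and the off-diagonal contributes exactly $X(\tuple t_2)$, so their union is $X(\tuple t_1) \cup X(\tuple t_2) = X(\tuple t_2)$ precisely because the hypothesis $M \models_X \tuple t_1 \subseteq \tuple t_2$ gives $X(\tuple t_1) \subseteq X(\tuple t_2)$.

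The main obstacle is making the equiextension hold \emph{with equality} rather than mere containment. The diagonal alone delivers only $X(\tuple t_1)$, which may be a proper subset of $X(\tuple t_2)$; the slack is taken up by the off-diagonal part, and this step genuinely relies on the lax rule \textbf{TS-$\exists_L$}, so that a single assignment may be given a whole set of $\tuple z$-witnesses, together with the standing assumption that $\dom(M)$ has at least two elements, which guarantees that off-diagonal assignments exist. The degenerate case $X = \emptyset$ should be dispatched separately, where both sides hold trivially.
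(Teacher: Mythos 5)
Your proof is correct and follows essentially the same route as the paper's: both directions hinge on the diagonal/off-diagonal decomposition of $X[M/u_1 u_2]$, with the diagonal extensions forced into the disjunct $\tuple z = \tuple t_1$ (pinning $X(\tuple t_1)$ inside the values of $\tuple z$) and the off-diagonal extensions supplying the rest of $X(\tuple t_2)$. The one substantive difference is your witness in the direction from $\tuple t_1 \subseteq \tuple t_2$ to the quantified formula: you give every off-diagonal assignment the whole set $X(\tuple t_2)$ of $\tuple z$-values, whereas the paper gives the off-diagonal extensions of each $s \in X$ just the singleton $\{\tuple t_2\langle s\rangle\}$ and lets $s$ range over $X$ to sweep out $X(\tuple t_2)$. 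As a consequence, your closing remark that this step ``genuinely relies'' on \textbf{TS-$\exists_L$} is inaccurate as a claim about the proposition itself: only your particular witness needs set-valued choices, and the paper's singleton-valued witness shows (as it notes in a footnote) that the same equivalence also holds under the strict semantics. That quibble aside, every step of your argument is sound, including the separate dispatch of the empty-team case.
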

\begin{proof}
	Suppose that $M \models_X \tuple t_1 \subseteq \tuple t_2$. Then let $X' = X[M/u_1 u_2]$, and pick the tuple of functions $\tuple H$ used to choose $\tuple z$ so that
	\[\tuple H(s) = \left\{\begin{array}{l l}
		\{\tuple t_1 \langle s\rangle\}, & \mbox{ if } s(\tuple u_1) = s(\tuple u_2);\\
	 \{\tuple t_2\langle s\rangle\}, & \mbox{ otherwise}
 \end{array}\right.
	 \]
	 for all $s \in X'$.\footnote{As an aside, it can be observed that, since $\tuple H$ always selects singletons, this whole argument can be adapted to the case of strict semantics without any difficulties. Therefore, strict equiextension logic is equivalent to strict inclusion logic and, by Proposition \ref{propo:strict_nonlocal}, does not satisfy locality either.}\\

	Then, for $Y = X'[\tuple H/\tuple z]$, by definition we have that $M \models_{Y} u_1 \not = u_2 \vee \tuple z = \tuple t_1$, and it only remains to verify that $M \models_{Y} \tuple t_2 \bowtie \tuple z$, that is, that $Y(\tuple t_2) = Y(\tuple z)$.
	\begin{itemize}
		\item $Y(\tuple t_2) \subseteq Y(\tuple z)$:\\
			Let $h \in Y$. Then there exists an assignment $s \in X$ with $\tuple t_2\langle s\rangle = \tuple t_2\langle h\rangle$. Now let $0$ and $1$ be two distinct elements of $M$, and consider  the assignment $h' = s[0/u_1][1/u_2][\tuple H/\tuple z]$. By construction, $h' \in Y$; and furthermore, by the definition of $\tuple H$ we have that $h'(\tuple z) = \tuple t_2\langle s \rangle = \tuple t_2\langle h\rangle$, as required.
		\item $Y(\tuple z) \subseteq Y(\tuple t_2)$:\\
			Let $h \in Y$. Then, by construction, $h(\tuple z)$ is $\tuple t_1\langle h\rangle$ or $\tuple t_2 \langle h\rangle$. But since $X(\tuple t_1) \subseteq X(\tuple t_2)$, in either case there exists an assignment $s \in X$ such $\tuple t_2\langle s\rangle = h(\tuple z)$. Now consider $h' = s[0/u_1][1/u_2][\tuple H/\tuple z]$: again, $h' \in Y$ and $h'(\tuple z) = \tuple t_2\langle h'\rangle = \tuple t_2\langle s\rangle = h(\tuple z)$, as required.
	\end{itemize}

	Conversely, suppose that $M \models_X \forall u_1 u_2 \exists \tuple z (\tuple t_2 \bowtie \tuple z \wedge (u_1 \not = u_2 \vee \tuple z = \tuple t_1))$, and that therefore there exists a tuple of functions $\tuple H$ such that, for $Y = X[M/u_1 u_2][\tuple H/\tuple z]$, $M \models_Y \tuple t_2 \bowtie \tuple z \wedge (u_1 \not = u_2 \vee \tuple z = \tuple t_1)$. Then consider any assignment $s \in X$, and let $h = s[0/u_1][0/u_2][\tuple H/\tuple z]$. Now, $h \in Y$ and $h(\tuple z) = \tuple t_1\langle s\rangle$; but since $M \models_Y \tuple t_2 \bowtie \tuple z$, this implies that there exists an assignment $h' \in Y$ such that $\tuple t_2 \langle h' \rangle = h(\tuple z) = \tuple t_1\langle s\rangle$. Finally, $h'$ derives from some assignment $s' \in X$, and for this assignment we have that $\tuple t_2\langle s\rangle = \tuple t_2\langle h'\rangle = \tuple t_1\langle s\rangle$ as required.
\end{proof}
As a consequence, inclusion logic is precisely as expressive as equiextension logic: 
\begin{coro}
	Any formula of inclusion logic is equivalent to some formula of equiextension logic, and vice versa.
\end{coro}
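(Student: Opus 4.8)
The plan is to translate between the two logics by a structural induction that leaves the first-order skeleton untouched and rewrites each atom into an equivalent formula of the target logic, relying on the two preceding propositions for the atomic cases and on the compositionality of team semantics for the inductive step. Throughout, ``equivalent'' means that $M \models_X \phi \Leftrightarrow M \models_X \phi'$ for all suitable models $M$ and teams $X$.

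For the direction from equiextension logic to inclusion logic, I would define, for each equiextension formula $\phi$, a translation $\phi^*$ obtained by replacing every occurrence of an equiextension atom $\tuple t_1 \bowtie \tuple t_2$ with the conjunction $\tuple t_1 \subseteq \tuple t_2 \wedge \tuple t_2 \subseteq \tuple t_1$, while leaving first order literals, connectives and quantifiers unchanged. The first of the two preceding propositions gives $M \models_X \tuple t_1 \bowtie \tuple t_2 \Leftrightarrow M \models_X \tuple t_1 \subseteq \tuple t_2 \wedge \tuple t_2 \subseteq \tuple t_1$ for all $M$ and $X$, so the base case is immediate; since this rewriting introduces no new variables, there are no capture issues, and the inductive cases follow at once because the team-semantic rules of Definition \ref{def:team_fol} and its variants evaluate a compound formula purely in terms of the satisfaction of its immediate subformulas over teams derived from $X$. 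Hence $M \models_X \phi \Leftrightarrow M \models_X \phi^*$, and $\phi^*$ is a genuine inclusion logic formula.

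For the converse direction I would proceed symmetrically, replacing each inclusion atom $\tuple t_1 \subseteq \tuple t_2$ with the equiextension-logic formula $\forall u_1 u_2 \exists \tuple z (\tuple t_2 \bowtie \tuple z \wedge (u_1 \not = u_2 \vee \tuple z = \tuple t_1))$ supplied by the second preceding proposition. The atomic equivalence again holds for all teams, so the same compositional induction applies. The one point that requires genuine care, and which I expect to be the main obstacle, is variable management: the replacement formula binds the auxiliary variables $u_1, u_2, \tuple z$, and these must be chosen distinct from the variables occurring in $\tuple t_1, \tuple t_2$ (as the proposition already requires) and, crucially, fresh with respect to the entire surrounding context, so that no free variable of $\phi$ is captured and so that the auxiliary variables introduced at distinct atom occurrences do not collide. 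Since infinitely many variables are available and, by locality (Theorem \ref{theo:strict_local}), the satisfaction of each subformula depends only on the restriction of the team to its free variables, such a choice is always possible, and the compositional substitution goes through. Combining the two translations establishes the stated equivalence.
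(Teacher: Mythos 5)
Your proposal is correct and is essentially the argument the paper leaves implicit: the corollary is stated as an immediate consequence of the two preceding propositions, with atoms of each logic rewritten into their equivalents from the other logic and the equivalence propagated through the compositional team-semantic rules. Your additional care about choosing the auxiliary bound variables $u_1, u_2, \tuple z$ distinct from the variables of $\tuple t_1, \tuple t_2$ (freshness with respect to the whole context is harmless but not strictly needed, since the atomic equivalence holds for arbitrary teams whose domain contains the terms' variables) fills in the only detail the paper glosses over.
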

\subsection{Exclusion logic}
\label{subsect:exclog}
With the name of \emph{exclusion logic} we refer to (lax, team) first order logic supplemented with the \emph{exclusion atoms} $\tuple t_1 ~|~ \tuple t_2$, with the satisfaction condition given in Definition \ref{def:inc_exc}. \\

As the following results show exclusion logic is, in a very strong sense, equivalent to dependence logic: 
\begin{theo}
	\label{theo:excl_to_dep}
	For all tuples of terms $\tuple t_1$ and $\tuple t_2$, of the same length, there exists a dependence logic formula $\phi$ such that 
	\[
		M \models_X \phi \Leftrightarrow M \models_X \tuple t_1 ~|~ \tuple t_2
	\]
	for all suitable models $M$ and teams $X$.
\end{theo}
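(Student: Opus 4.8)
The plan is to realise the exclusion atom as a \emph{separating two-colouring}. Unwinding rule \textbf{TS-exc}, $M \models_X \tuple t_1 ~|~ \tuple t_2$ holds precisely when $X(\tuple t_1) \cap X(\tuple t_2) = \emptyset$. Two subsets of $\dom(M)^k$ are disjoint if and only if there is a function from $\dom(M)^k$ into a two-element set of ``colours'' sending every value of $\tuple t_1$ to one colour and every value of $\tuple t_2$ to the other; and a dependence atom is exactly the device that forces such a colour to be a genuine function of the \emph{value} rather than of the whole assignment. I would therefore propose the dependence logic formula
\[
\phi := \exists c_0 c_1 \Big( =\!\!(c_0) \wedge =\!\!(c_1) \wedge c_0 \neq c_1 \wedge \forall \tuple z \exists w \big( =\!\!(\tuple z\, w) \wedge (\tuple z \neq \tuple t_1 \vee w = c_0) \wedge (\tuple z \neq \tuple t_2 \vee w = c_1) \big) \Big),
\]
where $c_0, c_1, w$ and the tuple $\tuple z$ (of the same length as $\tuple t_1$ and $\tuple t_2$) are fresh variables not occurring in $\tuple t_1$ or $\tuple t_2$, and $\tuple z = \tuple t_i$ abbreviates the conjunction of the coordinatewise equalities while $\tuple z \neq \tuple t_i$ abbreviates its negation. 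The constancy atoms pin $c_0$ and $c_1$ to two fixed domain elements, which exist since every model is assumed to have at least two elements, and $=\!\!(\tuple z\, w)$ forces $w$ to depend only on $\tuple z$.

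For the direction from right to left I would assume $X(\tuple t_1) \cap X(\tuple t_2) = \emptyset$, fix two distinct domain elements as the witnesses for $c_0$ and $c_1$, and on the expanded team $X[M/\tuple z]$ choose $w$ by the singleton-valued function sending each assignment whose $\tuple z$-value lies in $X(\tuple t_1)$ to $c_0$, each assignment whose $\tuple z$-value lies in $X(\tuple t_2)$ to $c_1$, and the remaining assignments arbitrarily (say to $c_0$). Disjointness is exactly what makes this choice of $w$ well defined as a function of $\tuple z$, so $=\!\!(\tuple z\, w)$ holds; the two first order disjunctions are flat and are checked assignment by assignment, so no splitting subtlety arises. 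For the converse I would argue by contraposition: if some $\tuple m$ lay in $X(\tuple t_1) \cap X(\tuple t_2)$, there would be $s, s' \in X$ with $\tuple t_1\langle s\rangle = \tuple m = \tuple t_2\langle s'\rangle$; in the fully quantified team the extensions of $s$ and $s'$ that set $\tuple z$ to $\tuple m$ are forced by the two disjuncts to carry $w = c_0$ and $w = c_1$ respectively, yet they agree on $\tuple z$, so $=\!\!(\tuple z\, w)$ would yield $c_0 = c_1$, contradicting $c_0 \neq c_1$.

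The genuinely delicate points, rather than the algebra, are bookkeeping ones. First, one must keep $\tuple z$ disjoint from the variables of $\tuple t_1, \tuple t_2$ so that after the expansion $X[M/\tuple z]$ the literal $\tuple z = \tuple t_1$ still selects the original $\tuple t_1$-value of each assignment. Second, since we have adopted the lax semantics, $\exists c_0$, $\exists c_1$ and $\exists w$ may a priori return sets of values; I expect the main care to go into noting that the constancy atoms collapse $c_0, c_1$ to single values and that the dependence atom, together with the flat first order constraints, forces every $w$-extension sharing a $\tuple z$-value to carry the same colour, which is what makes the contradiction in the converse direction robust. Establishing these two facts is where I would spend most of the effort; the colouring idea itself is what does the real work.
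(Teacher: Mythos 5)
Your proof is correct, and it is essentially the paper's own explicit construction: the paper translates the exclusion atom as $\forall \tuple z \exists u_1 u_2 (=\!\!(\tuple z, u_1) \wedge =\!\!(\tuple z, u_2) \wedge ((u_1 = u_2 \wedge \tuple z \not= \tuple t_1) \vee (u_1 \not= u_2 \wedge \tuple z \not= \tuple t_2)))$, which is the same colouring-of-$\tuple z$-values idea you use, with the two colours encoded as equality versus inequality of two $\tuple z$-dependent witnesses rather than as your two constancy-pinned constants $c_0, c_1$. The only thing you omit is the paper's opening observation that the theorem also follows immediately from Theorem \ref{theo:DLform}, since the exclusion condition is downwards closed and $\Sigma_1^1$-expressible; but your self-contained argument, including the care taken with lax set-valued choices and the flatness of the first order disjuncts, is sound.
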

\begin{proof}
	This follows immediately from Theorem \ref{theo:DLform}, since the satisfaction condition for the exclusion atom is downwards monotone and expressible in $\Sigma_1^1$. \\

For the sake of completeness, let us write a direct translation of exclusion atoms into dependence logic anyway.

Let $\tuple t_1$ and $\tuple t_2$ be as in our hypothesis, let $\tuple z$ be a tuple of new variables, of the same length of $\tuple t_1$ and $\tuple t_2$, and let $u_1, u_2$ be two further unused variables. Then $M \models_X \tuple t_1 ~|~ \tuple t_2$ if and only if 
\[
	M \models_X \forall \tuple z \exists u_1 u_2 (=\!\!(\tuple z, u_1) \wedge =\!\!(\tuple z, u_2) \wedge ((u_1 = u_2 \wedge \tuple z \not = \tuple t_1) \vee (u_1 \not = u_2 \wedge \tuple z \not = \tuple t_2))).
\]

Indeed, suppose that $M \models_X \tuple t_1 ~|~ \tuple t_2$, let $X' = X[M/\tuple z]$, and let $0, 1$ be two distinct elements in $\dom(M)$. 

Then define the functions $H_1$ and $H_2$ as follows:
\begin{itemize}
	\item For all $s' \in X'$, $H_1(s') = \{0\}$;
	\item For all $s'' \in X'[H_1/u_1]$, $H_2(s'') = \left\{\begin{array}{l l}
		\{0\} &\mbox{ if } s''(\tuple z) \not \in X(\tuple t_1);\\
		\{1\} & \mbox{ if } s''(\tuple z) \in X(\tuple t_1).
\end{array}\right.$
\end{itemize}
Then, for $Y = X'[H_1 H_2/u_1 u_2]$, we have that $M \models_Y =\!\!(\tuple z, u_1)$ and that $M \models_Y =\!\!(\tuple z, u_2)$, since the value of $u_1$ is constant in $Y$ and the value of $u_2$ in $Y$ is functionally determined by the value of $\tuple z$. 

Now split $Y$ into the two subteams $Y_1$ and $Y_2$ defined as 
\begin{align*}
&Y_1 = \{s \in Y : s(u_2) = 0\};\\
&Y_2 = \{s \in Y : s(u_2) = 1\}.
\end{align*}

Clearly, $M \models_{Y_1} u_1 = u_2$ and $M \models_{Y_2} u_1 \not = u_2$; hence, we only need to verify that $M \models_{Y_1} \tuple z \not = \tuple t_1$ and that $M \models_{Y_2} \tuple z \not = \tuple t_2$. 
\\

For the first case, let $h$ be any assignment in $Y_1$: then, by definition, $h(\tuple z) \not = \tuple t_1\langle s\rangle$ for all $s \in X$. But then $h(\tuple z) \not = \tuple t_1\langle h'\rangle$ for all $h' \in Y_1$, and since this is true for all $h \in Y_1$ we have that  $M \models_{Y_1} \tuple z \not = \tuple t_1$, as required.

For the second case, let $h$ be in $Y_2$ instead: then, again by definition, $h(\tuple z) = \tuple t_1\langle s\rangle$ for some $s \in X$. But $M \models_X \tuple t_1 ~|~ \tuple t_2$, and hence $h(\tuple z) \not = \tuple t_2\langle s'\rangle$ for all $s' \in X$; and as in the previous case, this implies that $h(\tuple z) \not = \tuple t_2(h')$ for all $h' \in Y_2$ and, since this argument can be made for all $h \in Y_2$, $M \models_{Y_2} \tuple z \not = \tuple t_2$.\\

Conversely, suppose that
\[
	M \models_X \forall \tuple z \exists u_1 u_2 (=\!\!(\tuple z, u_1) \wedge =\!\!(\tuple z, u_2) \wedge ((u_1 = u_2 \wedge \tuple z \not = \tuple t_1) \vee (u_1 \not = u_2 \wedge \tuple z \not = \tuple t_2))).
\]
Then there exist two functions $H_1$ and $H_2$ such that, for $Y = X[M/\tuple z][H_1 H_2 / u_1 u_2]$,  
\[
	M \models_Y =\!\!(\tuple z, u_1) \wedge =\!\!(\tuple z, u_2) \wedge ((u_1 = u_2 \wedge \tuple z \not = \tuple t_1) \vee (u_1 \not = u_2 \wedge \tuple z \not = \tuple t_2)).
\]

Now, let $s_1$ and $s_2$ be any two assignments in $X$: in order to conclude the proof, I only need to show that $\tuple t_1 \langle s_1\rangle \not = \tuple t_2\langle s_2\rangle$. Suppose instead that $\tuple t_1 \langle s_1 \rangle = \tuple t_2 \langle s_2\rangle = \tuple m$ for some tuple of elements $\tuple m$, and consider two assignments $h_1, h_2$ such that 
\[
h_1 \in \{s_1[\tuple m/\tuple z]\}[H_1 H_2 / u_1 u_2];\footnote{This team and the next one are actually singletons, because $H_1$ and $H_2$ must satisfy the dependency conditions.}
\]
and 
\[
h_2 \in \{s_2[\tuple m/\tuple z]\}[H_1 H_2 / u_1 u_2].
\]
Then $h_1, h_2 \in Y$; and furthermore, since $h_1(\tuple z) = h_2(\tuple z)$ and $M \models =\!\!(\tuple z, u_1) \wedge =\!\!(\tuple z, u_2)$, it must hold that $h_1(\tuple u_1) = h_2(\tuple u_1)$ and $h_1(\tuple u_2) = h_2(\tuple u_2)$.

Moreover, $M \models_Y (u_1 = u_2 \wedge \tuple z \not = \tuple t_1) \vee (u_1 \not = u_2 \wedge \tuple z \not = \tuple t_2)$, and therefore $Y$ can be split into two subteams $Y_1$ and $Y_2$ such that 
\[
	M \models_{Y_1} (u_1 = u_2 \wedge \tuple z \not = \tuple t_1) 
\]
and 
\[
	M \models_{Y_2} (u_1 \not = u_2 \wedge \tuple z \not = \tuple t_2).
\]

Now, as we saw, the assignments $h_1$ and $h_2$ coincide over $u_1$ and $u_2$, and hence either $\{h_1, h_2\} \subseteq Y_1$ or $\{h_1, h_2\} \subseteq Y_2$. But neither case is possible, because 
\[
	h_1(\tuple z) = \tuple m = \tuple t_1\langle s_1\rangle = \tuple t_1\langle h_1\rangle
\]
and therefore $h_1$ cannot be in $Y_1$, and because
\[
	h_2(\tuple z) = \tuple m = \tuple t_2\langle s_2\rangle = \tuple t_2\langle h_2\rangle 
\]
and therefore $h_2$ cannot be in $Y_2$.\\

So we reached a contradiction, and this concludes the proof.

%
%
\end{proof}
\begin{theo}
	\label{theo:dep_to_excl}
	Let $t_1 \ldots t_n$ be terms, and let $z$ be a variable not occurring in any of them. Then the dependence atom $=\!\!(t_1 \ldots t_n)$ is equivalent to the exclusion logic expression 
\[
	\phi = \forall z ( z = t_n \vee (t_1 \ldots t_{n-1} z ~|~ t_1 \ldots t_{n-1} t_n)),
\]
for all suitable models $M$ and teams $X$.
\end{theo}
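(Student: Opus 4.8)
The plan is to unfold both sides of the claimed equivalence directly from the team-semantic rules and reduce everything to the combinatorial condition defining the dependence atom. Throughout I write $X' = X[M/z]$ for the team obtained after the universal quantifier, and I recall that since $z$ does not occur in $t_1 \ldots t_n$, every assignment of $X'$ has the form $s[m/z]$ with $s \in X$ and $m \in \dom(M)$, and $t_i\langle s[m/z]\rangle = t_i\langle s\rangle$ for each $i$. The key structural observation, which drives both directions, is that for the lax disjunction in $\phi$ any assignment $s[m/z]$ with $m \ne t_n\langle s\rangle$ fails $z = t_n$ and therefore cannot lie in the disjunct satisfying $z = t_n$; it must be placed in the disjunct carrying the exclusion atom.

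For the direction from the dependence atom to $\phi$, I would assume $M \models_X =\!\!(t_1 \ldots t_n)$ and exhibit the canonical split $X' = Y \cup Z$, where $Y = \{s[t_n\langle s\rangle/z] : s \in X\}$ and $Z = \{s[m/z] : s \in X,\ m \ne t_n\langle s\rangle\}$. Then $M \models_Y z = t_n$ is immediate, and it remains to verify $M \models_Z t_1 \ldots t_{n-1} z ~|~ t_1 \ldots t_{n-1} t_n$ through rule \textbf{TS-exc}. Spelling this out, a violating pair on $Z$ would consist of $s[m/z], s'[m'/z] \in Z$ with $t_i\langle s\rangle = t_i\langle s'\rangle$ for $i \le n-1$ and $m = t_n\langle s'\rangle$; since membership in $Z$ forces $m \ne t_n\langle s\rangle$, such a pair is exactly a witness to the failure of the functional dependence, so the hypothesis rules it out.

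For the converse I would assume $M \models_X \phi$, fix a witnessing split $X' = Y \cup Z$ with $M \models_Y z = t_n$ and $M \models_Z t_1 \ldots t_{n-1} z ~|~ t_1 \ldots t_{n-1} t_n$, and argue by contraposition. Supposing $=\!\!(t_1 \ldots t_n)$ fails, I take $s, s' \in X$ with $t_i\langle s\rangle = t_i\langle s'\rangle$ for $i \le n-1$ but $t_n\langle s\rangle \ne t_n\langle s'\rangle$, and build $g = s[t_n\langle s'\rangle/z]$ together with $g' = s'[m'/z]$ for some $m' \ne t_n\langle s'\rangle$. Both $g$ and $g'$ fail $z = t_n$ (for $g$ precisely because $t_n\langle s'\rangle \ne t_n\langle s\rangle$), hence neither can be in $Y$, so both lie in $Z$; yet $t_1 \ldots t_{n-1} z$ evaluated at $g$ equals $t_1 \ldots t_{n-1} t_n$ evaluated at $g'$ (both being $(t_1\langle s'\rangle, \ldots, t_{n-1}\langle s'\rangle, t_n\langle s'\rangle)$), contradicting \textbf{TS-exc} on $Z$.

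The main obstacle is the converse: unlike the forward direction I do not get to choose the split, so I must manufacture a violation from an \emph{arbitrary} $Y, Z$. The leverage is exactly that belonging to $Y$ forces $z = t_n$, which pins down enough of each constructed assignment to drive it into $Z$. Here I will rely on the paper's standing assumption that $\dom(M)$ has at least two elements to secure a value $m' \ne t_n\langle s'\rangle$, so that $g'$ realises the required right-hand tuple while still avoiding $Y$. Everything else is routine unwinding of \textbf{TS-exc} and the quantifier rule.
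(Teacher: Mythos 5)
Your proof is correct and follows essentially the same route as the paper's: the same canonical split of $X[M/z]$ into the part satisfying $z = t_n$ and its complement in the forward direction, and the same trick in the converse of forcing two assignments with swapped/perturbed $z$-values into the exclusion disjunct so that the left tuple of one collides with the right tuple of the other. The only cosmetic differences are that the paper verifies the exclusion atom on $Z$ via locality and downwards closure of dependence atoms where you argue directly on pairs, and that the paper takes $m' = t_n\langle s\rangle$ as the perturbing value (making your appeal to the two-element domain assumption unnecessary, though harmless).
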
 
\begin{proof}
	Suppose that $M \models_X =\!\!(t_1 \ldots t_n)$, and consider the team $X[M/z]$. Now, let $Y =\{s \in X[M/z] : s(z) = t_n\langle s\rangle\}$ and let $Z = X[M/z] \backslash Y$. 

Clearly, $Y \cup Z = X[M/x]$ and $M \models_Y z = t_n$; hence, if we show that $Z \models t_1 \ldots t_{n-1} z ~|~ t_1 \ldots t_{n-1} t_n$ we can conclude that $M \models_X \phi$, as required. 

Now, consider any two $s, s' \in Z$, and suppose that $t_i\langle s\rangle = t_i\langle s'\rangle$ for all $i = 1 \ldots n-1$. But then $s(z) \not = t_n\langle s'\rangle$: indeed, since $M \models_X =\!\!(t_1 \ldots t_n)$, by the locality of dependence logic and by the downwards closure property we have that $M \models_Z =\!\!(t_1 \ldots t_n)$ and hence that $t_n\langle s\rangle = t_n\langle s'\rangle$. 

Therefore, if we had that $s(z) = t_n\langle s'\rangle$, it would follow that $s(z) = t_n\langle s'\rangle = t_n\langle s\rangle$ and $s$ would be in $Y$ instead. 

So $s(z) \not = t_n\langle s'\rangle$, and since this holds for all $s$ and $s'$ in $Z$ which coincide over $t_1 \ldots t_{n-1}$ we have that 
\[
	M \models_Z t_1 \ldots t_{n-1} z ~|~ t_1 \ldots t_{n-1}t_n,
\]
as required.\\

Conversely, suppose that $M \models_X \phi$, and let $s, s' \in X$ assign the same values to $t_1 \ldots t_{n-1}$. Now, by the definition of $\phi$, $X[M/z]$ can be split into two subteams $Y$ and $Z$ such that $M \models_Y z = t_n$ and\\ $M \models_Z (t_1 \ldots t_{n-1} z ~|~ t_1 \ldots t_{n-1} t_n)$.

Now, suppose that $t_n\langle s\rangle = m$ and $t_n \langle s'\rangle = m'$, and that $m \not = m'$: then $s[m'/z]$ and $s'[m/z]$ are in $s[M/z]$ but not in $Y$, and hence they are both in $Z$. But then, since $\tuple t_i\langle s\rangle = \tuple t_i\langle s'\rangle$ for all $i = 1 \ldots n-1$, 
\[
t_n\langle s' \rangle = m' = s[m'/z](z) \not = t_n\langle s'[m/z]\rangle = t_n\langle s'\rangle
\]
which is a contradiction. Therefore, $m = m'$, as required.
\end{proof}
\begin{coro}
\label{coro:DLeqEL}
Dependence logic is precisely as expressive as exclusion logic, both with respect to definability of sets of teams and with respect to sentences.
\end{coro}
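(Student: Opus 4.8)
The plan is to read the corollary off the two preceding theorems by a compositional induction, treating the atoms as the only nontrivial case in each direction. First I would show that every exclusion logic formula $\phi$ is equivalent to a dependence logic formula: given $\phi$, I would replace each occurrence of an exclusion atom $\tuple t_1 ~|~ \tuple t_2$ by the dependence logic formula supplied by Theorem \ref{theo:excl_to_dep}, renaming the auxiliary variables $\tuple z, u_1, u_2$ of that translation so that they are fresh with respect to the whole of $\phi$. Since exclusion logic and dependence logic are both obtained from the same lax first order team semantics (Definition \ref{def:team_fol} as modified for the lax reading), the clauses for literals, $\vee$, $\wedge$, $\exists$ and $\forall$ are literally identical in the two logics; hence the only clause at which the two formalisms can differ is the atomic one, which is exactly the clause governed by the quoted theorem. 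The translation in the other direction is entirely symmetric: given a dependence logic formula $\psi$, I would replace each dependence atom $=\!\!(t_1 \ldots t_n)$ by the exclusion logic formula of Theorem \ref{theo:dep_to_excl}, again choosing the bound variable $z$ fresh.

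The inductive step rests on the fact that team semantics is compositional, so that a subformula may be replaced \emph{salva veritate} by any formula equivalent to it over all teams; and this is exactly the form in which Theorems \ref{theo:excl_to_dep} and \ref{theo:dep_to_excl} are stated (``for all suitable models $M$ and teams $X$''). I would make this precise by inducting on formula structure: at a compound node the satisfaction of the formula on a team $X$ is determined by the satisfaction of its immediate subformulas on the teams $X$, $X[M/x]$, $X[H/x]$, or on the components of a splitting $X = Y \cup Z$, and by the induction hypothesis each such subformula has already been replaced by an equivalent one, so satisfaction at the node is unchanged. Both translations introduce only bound (quantified) variables, so the free variables of a formula and of its translation coincide; this keeps the domains of the teams aligned through the induction and, together with locality (Theorem \ref{theo:DLloc}), guarantees that the auxiliary variables do no harm.

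Since the two translations preserve equivalence over every team with a given domain, they in particular preserve the class of teams that a formula defines, which yields expressive equivalence at the level of formulas, that is, with respect to definability of sets of teams. Sentences are simply the special case in which the tuple of free variables is empty and the relevant team is $\{\emptyset\}$, so the equivalence over sentences follows immediately from the equivalence over formulas.

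The only point genuinely requiring care, and hence the place I would be most attentive, is the bookkeeping of fresh variables: the translations of distinct atom occurrences must not accidentally share variables or capture variables of the surrounding formula. This is, however, a purely syntactic nuisance, resolved by systematic renaming, rather than a real mathematical obstacle; the entire mathematical content of the corollary is already contained in the two theorems it invokes, so once compositional substitution is justified the result is immediate.
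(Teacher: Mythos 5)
Your proposal is correct and is essentially the paper's own (implicit) argument: the paper states this corollary without proof as an immediate consequence of Theorems \ref{theo:excl_to_dep} and \ref{theo:dep_to_excl}, and what you have done is spell out the compositional atom-by-atom substitution (with fresh auxiliary variables) that justifies reading the corollary off those two theorems, noting that sentences are the special case of the team $\{\emptyset\}$. The appeal to locality is harmless but not actually needed, since each atom's translation is equivalent to it over every team whose domain contains the atom's free variables, which is exactly the form in which the two theorems are stated.
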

\subsection{Inclusion/exclusion logic}
\label{subsect:ielogic}
Now that we have some information about inclusion logic and about exclusion logic, let us study \emph{inclusion/exclusion logic} (I/E logic for short), that is, the formalism obtained by adding both inclusion and exclusion atoms to the language of first-order logic. \\

By the results of the previous sections, we already know that inclusion atoms are expressible in independence logic and that exclusion atoms are expressible in dependence logic; furthermore, by Theorem \ref{theo:DL2IL}, dependence atoms are expressible in independence logic.

Then it follows at once that I/E logic is contained in independence logic: 
\begin{coro}
	For every inclusion/exclusion logic formula $\phi$ there exists an independence logic formula $\phi^*$ such that 
	\[
		M \models_X \phi \Leftrightarrow M \models_X \phi^*
	\]
	for all suitable models $M$ and teams $X$. 
\end{coro}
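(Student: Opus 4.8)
The plan is to prove the statement by a straightforward structural induction on $\phi$, defining the translation $\phi \mapsto \phi^*$ as I go. The essential observation is that I/E logic and independence logic share exactly the same connectives and quantifiers under the lax rules, so all the real content sits in the atomic cases, each of which is already settled by a theorem proved above.

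First I would dispose of the base cases. If $\phi$ is a first order literal, set $\phi^* = \phi$; the two logics agree on such formulas by rule \textbf{TS-atom}. If $\phi$ is an inclusion atom $\tuple t_1 \subseteq \tuple t_2$, let $\phi^*$ be the independence logic formula furnished by Theorem \ref{theo:Inc2IL}. If $\phi$ is an exclusion atom $\tuple t_1 ~|~ \tuple t_2$, then by Theorem \ref{theo:excl_to_dep} it is equivalent to a dependence logic formula; replacing in that formula each dependence atom $=\!\!(s_1 \ldots s_k)$ by the independence atom $\indep{s_1 \ldots s_{k-1}}{s_k}{s_k}$ according to Theorem \ref{theo:DL2IL} produces an independence logic formula $\phi^*$ equivalent to the original exclusion atom. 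In every case $M \models_X \phi \Leftrightarrow M \models_X \phi^*$ holds for all suitable $M$ and $X$.

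For the inductive step I set $(\psi \vee \theta)^* = \psi^* \vee \theta^*$, $(\psi \wedge \theta)^* = \psi^* \wedge \theta^*$, $(\exists x \psi)^* = \exists x\, \psi^*$ and $(\forall x \psi)^* = \forall x\, \psi^*$. Since the rules \textbf{TS-$\vee_L$}, \textbf{TS-$\wedge$}, \textbf{TS-$\exists_L$} and \textbf{TS-$\forall$} are literally identical in the two logics, applying the induction hypothesis to the immediate subformulas closes each case immediately.

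The only point requiring genuine care, and where I expect the main obstacle to lie, is variable management. The atomic translation of Theorem \ref{theo:Inc2IL}, and the composite translation of the exclusion atom, each introduce fresh bound variables (namely $v_1, v_2, \tuple z$, and $u_1, u_2$), and when such a translated atom is embedded in a larger context one must ensure that these auxiliary variables neither capture nor are captured by the surrounding quantifiers and free variables. I would handle this by choosing, for each occurrence of an atom, the auxiliary variables of its translation to be pairwise distinct and disjoint from all variables occurring in $\phi$; because these variables are all bound within the translated atom, the free variables of $\phi^*$ coincide with those of $\phi$. Locality then guarantees that introducing variables which never occur free does not disturb satisfaction: Theorem \ref{theo:strict_local} covers the inclusion fragment, and the analogous locality property for independence logic (cf.\ Theorem \ref{theo:ielocal}) covers the rest. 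Together with a routine replacement lemma, to the effect that substituting a semantically equivalent subformula preserves $\models_X$, the induction goes through and yields $M \models_X \phi \Leftrightarrow M \models_X \phi^*$ for all suitable $M$ and $X$, as required.
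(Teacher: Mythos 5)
Your proof is correct and follows essentially the same route as the paper: the paper derives this corollary immediately from the facts that inclusion atoms are expressible in independence logic (Theorem \ref{theo:Inc2IL}), exclusion atoms in dependence logic (Theorem \ref{theo:excl_to_dep}), and dependence atoms in independence logic (Theorem \ref{theo:DL2IL}), with the connectives and quantifiers left untouched. Your added attention to fresh auxiliary variables and locality merely makes explicit the routine substitution argument the paper leaves implicit.
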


Now, is I/E logic properly contained in independence logic? \\

As the following result illustrates, this is not the case: 
\begin{theo}
	Let $\indep{\tuple t_1}{\tuple t_2}{\tuple t_3}$ be an independence atom, and let $\phi$ be the formula 
\begin{align*}
	& \forall \tuple p \tuple q \tuple r ~ \exists u_1 u_2 u_3 u_4 
	\left( \bigwedge_{i=1}^4 =\!\!(\tuple p \tuple q \tuple r, u_i) \wedge 
((u_1 \not = u_2 \wedge (\tuple p \tuple q ~|~ \tuple t_1 \tuple t_2))
\vee\right.\\
&~ ~\left. \vee (u_1 = u_2 \wedge u_3 \not= u_4 \wedge (\tuple p \tuple r ~|~ \tuple t_1 \tuple t_3)) \vee (u_1 = u_2 \wedge u_3 = u_4 \wedge (\tuple p \tuple q \tuple r \subseteq \tuple t_1 \tuple t_2 \tuple t_3))) \right)
\end{align*}
where the dependence atoms are used as shorthands for the corresponding exclusion logic expressions, which exist because of Theorem \ref{theo:dep_to_excl}, and where all the quantified variables are new. 

Then, for all suitable models $M$ and teams $X$, 
\[
	M \models_X \indep{\tuple t_1}{\tuple t_2}{\tuple t_3} \Leftrightarrow M \models_X \phi.
\]
\end{theo}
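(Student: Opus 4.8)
The plan is to prove the two directions separately, in both cases passing first to the team $X' = X[M/\tuple p \tuple q \tuple r]$ produced by the leading universal quantifier, and reading each shorthand $=\!\!(\tuple p \tuple q \tuple r, u_i)$ directly through its dependence-atom semantics \textbf{TS-dep} (legitimate by Theorem \ref{theo:dep_to_excl}). The quantifier $\forall \tuple p \tuple q \tuple r$ populates $X'$ with every triple of values $(\tuple a, \tuple b, \tuple c)$ attached to every member of $X$, while the four dependence atoms force $u_1, \ldots, u_4$ to be functions of $\tuple p \tuple q \tuple r$ alone. Consequently the three guards $u_1 \neq u_2$, $u_1 = u_2 \wedge u_3 \neq u_4$, and $u_1 = u_2 \wedge u_3 = u_4$ partition $X'$ according to the $\tuple p \tuple q \tuple r$-value of each assignment, so that all assignments sharing a given $(\tuple a, \tuple b, \tuple c)$ are routed to one and the same disjunct. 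This uniformity is the mechanism that lets the formula ``test'', triple by triple, whether a value is to be handled by one of the two exclusion clauses or by the inclusion clause.

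For the forward direction I would assume $M \models_X \indep{\tuple t_1}{\tuple t_2}{\tuple t_3}$ and classify each triple $(\tuple a, \tuple b, \tuple c)$ realised in $X'$ into three cases: (A) $(\tuple a, \tuple b) \notin X(\tuple t_1 \tuple t_2)$; (B) $(\tuple a, \tuple b) \in X(\tuple t_1 \tuple t_2)$ but $(\tuple a, \tuple c) \notin X(\tuple t_1 \tuple t_3)$; and (C) both $(\tuple a, \tuple b) \in X(\tuple t_1 \tuple t_2)$ and $(\tuple a, \tuple c) \in X(\tuple t_1 \tuple t_3)$. I then pick the functions for $u_1, \ldots, u_4$ (using two fixed distinct domain elements) so that case (A) makes $u_1 \neq u_2$, case (B) makes $u_1 = u_2$ and $u_3 \neq u_4$, and case (C) makes $u_1 = u_2$ and $u_3 = u_4$. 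The exclusion clause $\tuple p \tuple q ~|~ \tuple t_1 \tuple t_2$ then holds on the case-(A) subteam, since its $\tuple p \tuple q$-values are precisely the triples missing from $X(\tuple t_1 \tuple t_2)$, and symmetrically for $\tuple p \tuple r ~|~ \tuple t_1 \tuple t_3$ on the case-(B) subteam. The only nontrivial clause is the inclusion atom $\tuple p \tuple q \tuple r \subseteq \tuple t_1 \tuple t_2 \tuple t_3$ on the case-(C) subteam: here the independence hypothesis is exactly what is needed, because from witnesses for $(\tuple a, \tuple b) \in X(\tuple t_1 \tuple t_2)$ and $(\tuple a, \tuple c) \in X(\tuple t_1 \tuple t_3)$, which agree on $\tuple t_1$, rule \textbf{TS-indep} yields an $s'' \in X$ with $\tuple t_1 \tuple t_2 \tuple t_3 \langle s'' \rangle = (\tuple a, \tuple b, \tuple c)$, whose extension into $X'$ lies in the case-(C) subteam and supplies the required inclusion witness.

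For the converse I assume $M \models_X \phi$ and, given $s, s' \in X$ with $\tuple t_1 \langle s \rangle = \tuple t_1 \langle s' \rangle = \tuple a$, set $\tuple b = \tuple t_2 \langle s \rangle$ and $\tuple c = \tuple t_3 \langle s' \rangle$; the goal is to produce $s'' \in X$ with $\tuple t_1 \tuple t_2 \tuple t_3 \langle s'' \rangle = (\tuple a, \tuple b, \tuple c)$. Examining the triple $(\tuple a, \tuple b, \tuple c)$ in $X'$, the extension of $s$ realises $\tuple p \tuple q = \tuple t_1 \tuple t_2$ on itself, so it cannot lie in the disjunct carrying $\tuple p \tuple q ~|~ \tuple t_1 \tuple t_2$, which forces $u_1 = u_2$ for this triple; likewise the extension of $s'$ realises $\tuple p \tuple r = \tuple t_1 \tuple t_3$, ruling out the second disjunct and forcing $u_3 = u_4$. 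Hence every assignment with this $\tuple p \tuple q \tuple r$-value is routed to the inclusion clause, and applying $\tuple p \tuple q \tuple r \subseteq \tuple t_1 \tuple t_2 \tuple t_3$ to the extension of $s$ delivers an assignment of that subteam whose $\tuple t_1 \tuple t_2 \tuple t_3$-value is $(\tuple a, \tuple b, \tuple c)$; its underlying member of $X$ is the witness $s''$.

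The main obstacle, and the place demanding the most care, is the bookkeeping that binds the three syntactic disjuncts to the three cases: one must verify that the dependence atoms genuinely make the split depend only on $\tuple p \tuple q \tuple r$ (so that the whole fibre over a triple travels together), that the lax disjunction rule \textbf{TS-$\vee_L$} permits the intended three-way decomposition, and that each exclusion and inclusion atom is read with the correct quantification over assignments of its subteam (so that an assignment realising $\tuple p \tuple q = \tuple t_1 \tuple t_2$ on itself really does block \textbf{TS-exc}). Once this uniformity is in place, the independence atom enters at exactly one point — guaranteeing membership in $X(\tuple t_1 \tuple t_2 \tuple t_3)$ for the inclusion clause — and everything else reduces to routine checking against \textbf{TS-inc}, \textbf{TS-exc} and \textbf{TS-dep}.
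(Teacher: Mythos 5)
Your proposal is correct and follows essentially the same route as the paper's own proof: the same choice of $u$-functions encoding membership of $(\tuple p\tuple q)$-values in $X(\tuple t_1\tuple t_2)$ and of $(\tuple p\tuple r)$-values in $X(\tuple t_1\tuple t_3)$, the same three-way split of $X[M/\tuple p\tuple q\tuple r]$, the same use of \textbf{TS-indep} to supply the inclusion witness, and the same blocking argument (extensions of $s$ and $s'$ defeating the two exclusion disjuncts, with the dependence atoms and the mutually exclusive guards forcing the whole fibre over $(\tuple a,\tuple b,\tuple c)$ into the inclusion disjunct) for the converse. No gaps to report.
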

\begin{proof}
Suppose that $M \models_X \indep{\tuple t_1}{\tuple t_2}{\tuple t_3}$, and consider the team $X' = X[M/\tuple p \tuple q \tuple r]$. 

Now, let $0$ and $1$ be two distinct elements of the domain of $M$, and let the functions $F_1 \ldots F_4$ be defined as follows:
\begin{itemize}
\item For all $s \in X'$, $F_1(s) = 0$; 
\item For all $s \in X'[F_1/u_1]$, 
	\[
		F_2(s) = \left\{\begin{array}{l l}
			0 & \mbox{ if there exists a } s' \in X \mbox{ such that } \tuple t_1 \langle s'\rangle \tuple t_2 \langle s'\rangle = s(\tuple p) s(\tuple q);\\
			1 & \mbox{ otherwise;}
		\end{array}
		\right.
	\]
\item For all $s \in X'[F_1/u_1][F_2/u_2]$, $F_3(s) = 0$; 
\item For all $s \in X'[F_1/u_1][F_2/u_2][F_3/u_3]$, 
	\[
		F_4(s) = \left\{\begin{array}{l l}
			0 & \mbox{ if there exists a } s' \in X \mbox{ such that } \tuple t_1 \langle s'\rangle \tuple t_3 \langle s'\rangle = s(\tuple p) s(\tuple r);\\
			1 & \mbox{ otherwise.}
		\end{array}
		\right.
	\]
\end{itemize}
Now, let $Y = X'[F_1/u_1][F_2/u_2][F_3/u_3][F_4/u_4]$: by the definitions of $F_1 \ldots F_4$, it holds that all dependencies are respected. Let then $Y$ be split into $Y_1$, $Y_2$ and $Y_3$ according to: 
\begin{itemize}
\item $Y_1 = \{s \in Y : s(u_1) \not = s(u_2)\}$; 
\item $Y_2 = \{s \in Y : s(u_3) \not = s(u_4)\} \backslash Y_1$;
\item $Y_3 = Y \backslash (Y_1 \cup Y_2)$.
\end{itemize}

Now, let $s$ be any assignment of $Y_1$: then, since $s(u_1) \not = s(u_2)$, by the definitions of $F_1$ and $F_2$ we have that 
\[
	\forall s' \in Y, s(\tuple p) s(\tuple q) \not = \tuple t_1\langle s'\rangle \tuple t_2\langle s'\rangle
\] 
and, in particular, that the same holds for all the $s' \in Y_1$. Hence, 
\[
	M \models_{Y_1}  u_1 \not = u_2 \wedge (\tuple p \tuple q ~|~ \tuple t_1 \tuple t_2),
\] 
as required. 

Analogously, let $s$ be any assignment of $Y_2$: then $s(u_1) = s(u_2)$, since otherwise $s$ would be in $Y_1$, $s(u_3) \not = s(u_4)$ and 
\[
	\forall s' \in Y, s(\tuple p) s(\tuple r) \not = \tuple t_1\langle s'\rangle \tuple t_3\langle s'\rangle
\] 
and therefore 
\[
	M \models_{Y_2} u_1 = u_2 \wedge u_3 \not= u_4 \wedge (\tuple p \tuple r ~|~ \tuple t_1 \tuple t_3).
\]
Finally, suppose that $s \in Y_3$: then, by definition, $s(u_1) = s(u_2)$ and $s(u_3) = s(u_4)$. Therefore, there exist two assignments $s'$ and $s''$ in $X$ such that 
\[
	\tuple t_1\langle s'\rangle \tuple t_2\langle s'\rangle = s(\tuple p) s(\tuple q)
\]
and 
\[
	\tuple t_1\langle s''\rangle \tuple t_3\langle s''\rangle = s(\tuple p) s(\tuple r)
\]
But by hypothesis we know that $M \models_X \indep {\tuple t_1} {\tuple t_2}{\tuple t_3}$, and $s'$ and $s''$ coincide over $\tuple t_1$, and therefore there exists a new assignment $h \in X$ such that 
\[
	\tuple t_1\langle h\rangle \tuple t_2\langle h\rangle \tuple t_3\langle h\rangle = s(\tuple p) s(\tuple q) s(\tuple r). 
\]

Now, let $o$ be the assignment of $Y$ given by 
\[
	o = h[\tuple t_1\langle h\rangle \tuple t_2\langle h\rangle \tuple t_3\langle h\rangle / \tuple p \tuple q \tuple r][F_1 \ldots F_4 /u_1 \ldots u_4]:
\]
by the definitions of $F_1 \ldots F_4$ and by the construction of $o$, we then get that 
\[
	o(u_1) = o(u_2) = o(u_3) = o(u_4) = 0
\]
and therefore that $o \in Y_3$. 

But by construction, 
\[
	\tuple t_1\langle o\rangle \tuple t_2\langle o\rangle \tuple t_3\langle o\rangle = \tuple t_1\langle h\rangle \tuple t_2\langle h\rangle \tuple t_3\langle h\rangle = s(\tuple p) s(\tuple q) s(\tuple r),
\]
and hence
\[
M \models_{Y_3} \tuple p \tuple q \tuple r \subseteq \tuple t_1 \tuple t_2 \tuple t_3
\]
as required.\\

Conversely, suppose that $M \models_X \phi$, and let $s, s' \in X$ be such that $\tuple t_1 \langle s\rangle = \tuple t_1 \langle s'\rangle$. Now, consider the two assignments $h, h' \in X' = X[M/\tuple p \tuple q \tuple r]$ given by 
\[
	h = s[\tuple t_1\langle s\rangle/\tuple p][\tuple t_2\langle s\rangle/\tuple q][\tuple t_3\langle s'\rangle/\tuple r]
\]
and 
\[
	h' = s'[\tuple t_1\langle s\rangle/\tuple p][\tuple t_2\langle s\rangle/\tuple q][\tuple t_3\langle s'\rangle/\tuple r].
\]

Now, since $M \models_X \phi$, there exist functions $F_1 \ldots F_4$, depending only on $\tuple p$, $\tuple q$ and $\tuple r$, such that\\$Y = X'[F_1/u_1][F_2/u_2][F_3/u_3][F_4/u_4]$ can be split into three subteams $Y_1$, $Y_2$ and $Y_3$ and
\begin{align*}
&M \models_{Y_1} (u_1 \not = u_2 \wedge (\tuple p \tuple q ~|~ \tuple t_1 \tuple t_2));\\
&M \models_{Y_2} (u_1 = u_2 \wedge u_3 \not= u_4 \wedge (\tuple p \tuple r ~|~ \tuple t_1 \tuple t_3));\\
&M \models_{Y_3} (u_1 = u_2 \wedge u_3 = u_4 \wedge (\tuple p \tuple q \tuple r \subseteq \tuple t_1 \tuple t_2 \tuple t_3)).
\end{align*}
Now, let 
\[
	o = h[F_1/u_1][F_2/u_2][F_3/u_3][F_4/u_4]
\]
and
\[
	o' = h'[F_1/u_1][F_2/u_2][F_3/u_3][F_4/u_4]:
\]
since the $F_i$ depend only on $\tuple p \tuple q \tuple r$ and the values of these variables are the same for $h$ and for $h'$, we have that $o$ and $o'$ have the same values for $u_1 \ldots u_4$, and therefore that they belong to the same $Y_i$. 

But they cannot be in $Y_1$ nor in $Y_2$, since
\[
	o(\tuple p)o(\tuple q) = o'(\tuple p)o'(\tuple q) = \tuple t_1\langle s\rangle \tuple t_2\langle s\rangle = \tuple t_1\langle o\rangle \tuple t_2\langle o\rangle
\]
and 
\[
	o(\tuple p)o(\tuple r) = o'(\tuple p)o'(\tuple r) = \tuple t_1\langle s'\rangle \tuple t_3\langle s'\rangle = \tuple t_1\langle o'\rangle \tuple t_3\langle o'\rangle;
\]
therefore, $o$ and $o'$ are in $Y_3$, and there exists an assignment $o'' \in Y_3$ with 
\[
	\tuple t_1\langle o''\rangle \tuple t_2\langle o''\rangle \tuple t_3\langle o''\rangle = o(\tuple p) o(\tuple q) o(\tuple r) = \tuple t_1\langle s\rangle\tuple t_2\langle s\rangle \tuple t_3\langle s'\rangle
\]
and, finally, there exists a $s'' \in X$ such that $\tuple t_1\langle s''\rangle \tuple t_2\langle s''\rangle \tuple t_3\langle s''\rangle = \tuple t_1\langle s\rangle\tuple t_2\langle s\rangle \tuple t_3\langle s'\rangle$,  as required. 
\end{proof}
Independence logic and inclusion/exclusion logic are therefore equivalent: 
\begin{coro}
	\label{coro:IL_eq_IE}
	Any independence logic formula is equivalent to some inclusion/exclusion logic formula, and any inclusion/exclusion logic formula is equivalent to some independence logic formula. 
\end{coro}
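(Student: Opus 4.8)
The plan is to treat the two directions of the claimed equivalence separately and reduce each to material already in hand. One direction is immediate: every inclusion/exclusion logic formula is equivalent to an independence logic formula by the first corollary of this subsection, which was itself obtained by combining the expressibility of inclusion atoms in independence logic (Theorem \ref{theo:Inc2IL}) with that of exclusion atoms in dependence logic (Theorem \ref{theo:excl_to_dep}), the latter embedding into independence logic by Theorem \ref{theo:DL2IL}. It therefore remains only to establish the converse, namely that every independence logic formula is equivalent to some inclusion/exclusion logic formula.

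For the converse I would proceed by structural induction on an independence logic formula $\phi$, producing an inclusion/exclusion logic formula $\phi^{\circ}$ with the same free variables. First-order literals are already inclusion/exclusion formulas and are left unchanged. The connectives $\vee$, $\wedge$, $\exists x$ and $\forall x$ belong to both logics, so on these I would simply set $(\psi \ast \theta)^{\circ} = \psi^{\circ} \ast \theta^{\circ}$ and $(Qx\,\psi)^{\circ} = Qx\,\psi^{\circ}$. The one genuinely new case is the independence atom $\indep{\tuple t_1}{\tuple t_2}{\tuple t_3}$: here I would take $\phi^{\circ}$ to be exactly the inclusion/exclusion formula furnished by the theorem just proved, recalling that the dependence atoms occurring in it are themselves abbreviations for honest exclusion logic expressions by Theorem \ref{theo:dep_to_excl}, so that $\phi^{\circ}$ genuinely lies in inclusion/exclusion logic.

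The step that needs justification is that this atom-by-atom replacement still yields an equivalent formula once the atoms sit inside a larger context. This is a compositionality (substitution) argument. Under the lax semantics of Definition \ref{def:team_fol} (with rule \textbf{TS-$\exists_L$}), the truth of $M \models_X \phi$ is determined solely by the truth of the immediate subformulas of $\phi$ on the teams derived from $X$ by the semantic rules: on $Y, Z$ with $X = Y \cup Z$ for a disjunction, on $X$ itself for a conjunction, and on $X[H/x]$ or $X[M/x]$ for the quantifiers. Since the equivalences supplied by the induction hypothesis and by the preceding theorem hold uniformly at every suitable team, and in particular at each of these derived teams, one may replace any subformula by a provably equivalent one without altering the satisfaction condition of the whole; this yields $M \models_X \phi \Leftrightarrow M \models_X \phi^{\circ}$ by induction, completing the converse.

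The main obstacle is essentially bookkeeping rather than conceptual. Because the translation of each independence atom introduces its own block of fresh quantified variables $\tuple p \tuple q \tuple r$ and $u_1 \ldots u_4$, I would first rename variables so that these blocks are pairwise disjoint and disjoint from the variables already occurring in $\phi$, guaranteeing that the local translations neither interfere with one another nor capture free variables when assembled. With this renaming fixed and the substitution principle in place, both inclusions hold, and the promised equivalence of independence logic and inclusion/exclusion logic follows at once.
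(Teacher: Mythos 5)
Your proposal is correct and follows essentially the same route as the paper: the paper derives this corollary immediately from the containment of I/E logic in independence logic (the corollary built on Theorems \ref{theo:Inc2IL}, \ref{theo:excl_to_dep} and \ref{theo:DL2IL}) together with the preceding theorem expressing the independence atom in I/E logic, exactly the two ingredients you use. Your explicit structural induction, the compositional substitution argument, and the renaming of the fresh bound variables merely spell out the routine details the paper leaves implicit in its ``it follows at once'' step.
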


Figure \ref{fig:logrel2} summarizes the translatability\footnote{To be more accurate, Figure \ref{fig:logrel2} represents the translatability relations between the logics which we considered, \emph{with respect to all formulas}. Considering sentences only would lead to a different graph.} relations between the logics of imperfect information which have been considered in this work.
\begin{figure}
\begin{center}
	\epsfig{file=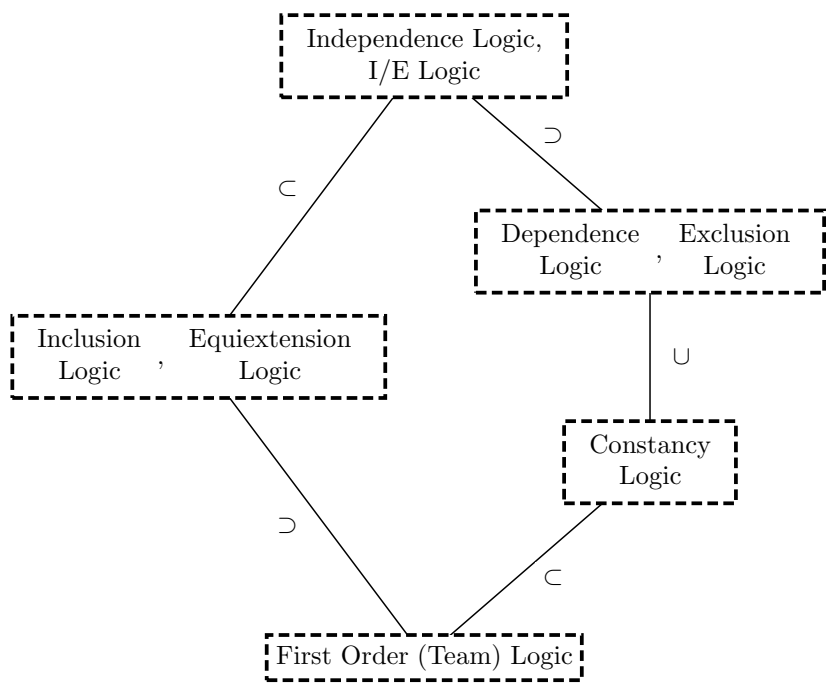}
\end{center}
 	\caption{Relations between logics of imperfect information (wrt formulas)}
	\label{fig:logrel2}
\end{figure}

Let us finish this section verifying that I/E logic (and, as a consequence, also inclusion logic, equiextension logic and independence logic) with the lax semantics is local: 
\begin{theo}[Inclusion/exclusion logic with lax semantics is local]
	\label{theo:ielocal}
	Let $M$ be a first order model, let $\phi$ be any I/E logic formula and let $V$ be a set of variables such that $\free(\phi) \subseteq V$. Then, for all suitable teams $X$,
	\[
	M \models_X \phi \Leftrightarrow M \models_{X_{\upharpoonright V}} \phi
	\]
\end{theo}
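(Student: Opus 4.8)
The plan is to prove the statement by structural induction on $\phi$, following the template of the locality proof for dependence logic (Theorem \ref{theo:DLloc}) but paying special attention to the lax rules \textbf{TS-$\vee_L$} and \textbf{TS-$\exists_L$}, since --- as Proposition \ref{propo:strict_nonlocal} shows --- the analogous statement fails under the strict rules. Throughout I write $W = V \cup \{x\}$ when treating a quantifier binding $x$, and I use repeatedly the observation that restriction to a set of variables containing all variables occurring in a tuple of terms $\tuple t$ leaves the induced relation unchanged, i.e.\ $X(\tuple t) = X_{\upharpoonright V}(\tuple t)$ whenever the variables of $\tuple t$ lie in $V$. The base cases fall out of this at once. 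For a first order literal, $M \models_X \phi$ means $M \models_s \phi$ for every $s \in X$, and since $\free(\phi) \subseteq V$ the truth of the literal at $s$ depends only on $s_{\upharpoonright V}$; hence the condition is the same for $X$ and for $X_{\upharpoonright V}$. For an inclusion atom $\tuple t_1 \subseteq \tuple t_2$ and an exclusion atom $\tuple t_1 ~|~ \tuple t_2$, rules \textbf{TS-inc} and \textbf{TS-exc} depend only on the relations $X(\tuple t_1)$ and $X(\tuple t_2)$, which by the observation above coincide with $X_{\upharpoonright V}(\tuple t_1)$ and $X_{\upharpoonright V}(\tuple t_2)$.

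The propositional connectives are then handled as follows. Conjunction is immediate: both conjuncts have free variables inside $V$, so I apply the induction hypothesis to each and recombine via \textbf{TS-$\wedge$}. For a disjunction $\psi \vee \theta$ the forward direction simply pushes a witnessing decomposition $X = Y \cup Z$ down to $X_{\upharpoonright V} = Y_{\upharpoonright V} \cup Z_{\upharpoonright V}$ and invokes the induction hypothesis on $Y$ and $Z$. For the converse I must \emph{lift} a decomposition $X_{\upharpoonright V} = Y' \cup Z'$ back to $X$, which I do by taking preimages under the restriction map: set $Y = \{s \in X : s_{\upharpoonright V} \in Y'\}$ and $Z = \{s \in X : s_{\upharpoonright V} \in Z'\}$. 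Then $X = Y \cup Z$ because every $s_{\upharpoonright V}$ lies in $Y' \cup Z' = X_{\upharpoonright V}$, and $Y_{\upharpoonright V} = Y'$, $Z_{\upharpoonright V} = Z'$ because $Y', Z' \subseteq X_{\upharpoonright V}$; the induction hypothesis then delivers $M \models_Y \psi$ and $M \models_Z \theta$. Note that no disjointness is demanded here, which is exactly what \textbf{TS-$\vee_L$} permits.

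The existential case is the heart of the argument and the step I expect to be the main obstacle. Applying the induction hypothesis to $\psi$ with respect to $W = V \cup \{x\}$ (legitimate, since $\free(\psi) \subseteq \free(\exists x \psi) \cup \{x\} \subseteq W$), the problem reduces to a set-theoretic identity relating the supplemented teams under restriction. For the direction $M \models_X \exists x \psi \Rightarrow M \models_{X_{\upharpoonright V}} \exists x \psi$, given a witnessing $H : X \rightarrow \part(\dom(M)) \backslash \{\emptyset\}$ I define $H'$ on $X_{\upharpoonright V}$ by collecting the values of $H$ over each fiber of the restriction map, $H'(t) = \bigcup \{H(s) : s \in X,\ s_{\upharpoonright V} = t\}$, which is nonempty and yields $(X_{\upharpoonright V})[H'/x] = (X[H/x])_{\upharpoonright W}$; the induction hypothesis then transports $M \models_{X[H/x]} \psi$ to $M \models_{(X_{\upharpoonright V})[H'/x]} \psi$. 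For the converse I pull a witness $H'$ on $X_{\upharpoonright V}$ back to $H(s) = H'(s_{\upharpoonright V})$ on $X$, again obtaining the same supplemented team up to restriction. The crucial point --- and the reason locality holds in the lax but not the strict semantics --- is that $H'$ is allowed to return \emph{sets} rather than single elements: distinct assignments $s_1, s_2 \in X$ sharing a restriction $(s_1)_{\upharpoonright V} = (s_2)_{\upharpoonright V}$ may have been assigned different values by a strict witness, and only the freedom to take their union preserves satisfaction. The universal case is then routine by the same bookkeeping, using the identity $(X[M/x])_{\upharpoonright W} = (X_{\upharpoonright V})[M/x]$ together with rule \textbf{TS-$\forall$} and the induction hypothesis on $\psi$.
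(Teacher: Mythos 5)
Your proof is correct and follows essentially the same route as the paper's own argument: structural induction with the same key constructions, namely the preimage decomposition $Y = \{s \in X : s_{\upharpoonright V} \in Y'\}$ for lifting a disjunctive split, the fiber-union witness $H'(t) = \bigcup\{H(s) : s_{\upharpoonright V} = t\}$ for the existential, and its pullback $H(s) = H'(s_{\upharpoonright V})$ for the converse. Your added remark that the freedom to return sets rather than single elements is exactly what fails under strict semantics is a correct and apt observation, consistent with Proposition \ref{propo:strict_nonlocal}.
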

\begin{proof}
	The proof is by structural induction on $\phi$. 
	\begin{enumerate}
		\item If $\phi$ is a first order literal, an inclusion atom or an exclusion atom then the statement follows trivially from the corresponding semantic rule;
		\item Let $\phi$ be of the form $\psi \vee \theta$, and suppose that $M \models_X \psi \vee \theta$. Then, by definition, $X = Y \cup Z$ for two subteams $Y$ and $Z$ such that $M \models_Y \psi$ and $M \models_Z \theta$. Then, by induction hypothesis, $M \models_{Y_{\upharpoonright V}} \psi$ and $M \models_{Z_{\upharpoonright V}} \theta$. But $X_{\upharpoonright V} = Y_{\upharpoonright V} \cup Z_{\upharpoonright V}$: indeed, $s \in X$ if and only if $s \in Y$ or $s \in Z$, and hence $s_{\upharpoonright V} \in X_{\upharpoonright V}$ if and only if it is in $Y_{\upharpoonright V}$ or in $Z_{\upharpoonright V}$. Hence, $M \models_{X_{\upharpoonright V}} \psi \vee \theta$, as required. 

			Conversely, suppose that $M \models_{X_{\upharpoonright V}} \psi \vee \theta$, that is, that $X_{\upharpoonright V} = Y' \cup Z'$ for two subteams $Y'$ and $Z'$ such that $M \models_{Y'} \psi$ and $M \models_{Z'} \theta$. Then define $Y = \{s \in X : s_{\upharpoonright V} \in Y'\}$ and $Z = \{s \in X : s_{\upharpoonright V} \in Z'\}$. Now, $X = Y \cup Z$: indeed, if $s \in X$ then $s_{\upharpoonright V}$ is in $X_{\upharpoonright V}$, and hence it is in $Y'$ or in $Z'$, and on the other hand if $s$ is in $Y$ or in $Z$ then it is in $X$ by definition. Furthermore, $Y_{\upharpoonright V} = Y'$ and $Z_{\upharpoonright V} = Z'$,\footnote{By definition, $Y_{\upharpoonright V} \subseteq Y'$ and $Z_{\upharpoonright V} \subseteq Z'$. On the other hand, if $s' \in Y'$ then $s' \in X_{\upharpoonright V}$, and hence $s'$ is of the form $s_{\upharpoonright V}$ for some $s \in X$, and therefore this $s$ is in $Y$ too, and finally $s' = s_{\upharpoonright V} \in Y_{\upharpoonright V}$. The same argument shows that $Z' \subseteq Z_{\upharpoonright V}$.}  and hence by induction hypothesis $M \models_Y \psi$ and $M \models_Z \theta$, and finally $M \models_X \psi \vee \theta$. 
		\item Let $\phi$ be of the form $\psi \wedge \theta$. Then $M \models_X \psi \wedge \theta$ if and only if $M \models_X \psi$ and $M \models_X \theta$, that is, by induction hypothesis, if and only if $M \models_{X_{\upharpoonright V}} \psi$ and $M \models_{X_{\upharpoonright V}} \theta$. But this is the case if and only if $M \models_{X_{\upharpoonright V}} \psi \wedge \theta$, as required. 
		\item Let $\phi$ be of the form $\exists x \psi$, and suppose that $M \models_X \exists x \psi$. Then there exists a function $H: X \rightarrow \part(\dom(M)) \backslash \{\emptyset\}$ such that $M \models_{X[H/x]} \psi$. Then, by induction hypothesis, $M \models_{(X[H/x])_{\upharpoonright V \cup \{x\}}} \psi$. \\

			Now consider the function $H': X_{\upharpoonright V} \rightarrow \part(\dom(M)) \backslash \emptyset$ which assigns to every $s' \in X_{\upharpoonright V}$ the set
				\[
					H'(s') = \bigcup \{H(s) : s \in X, s' = s_{\upharpoonright V}\}.
				\]
				Then $H'$ assigns a nonempty set to every $s' \in X_{\upharpoonright V}$, as required; and furthermore, $X_{\upharpoonright V}[H'/x]$ is precisely $(X[H/x])_{\upharpoonright V \cup \{x\}}$.\footnote{Indeed, suppose that $s' \in X[H/x]$: then there exists a $s \in X$ such that $s' = s[m/x]$ for some $m \in H(s)$. Then $s_{\upharpoonright V} \in X_{\upharpoonright V}$, and moreover $m \in H'(s_{\upharpoonright V})$ by the definition of $H'$, and hence $s'_{\upharpoonright V \cup \{x\}} = s_{\upharpoonright V}[m/x] \in X_{\upharpoonright V}[H'/x]$.

				Conversely, suppose that $h' \in X_{\upharpoonright V}[H'/x]$: then there exists a $h \in X_{\upharpoonright V}$ such that $h' = h[m/x]$ for some $m \in H'(h)$. But then there exists a $s \in X$ such that $h = s_{\upharpoonright V}$ and such that $m \in H(s)$; and therefore, $s[m/x] \in X[H/x]$, and finally $h' = h[m/x] = (s[m/x])_{\upharpoonright V \cup \{x\}} \in (X[H/x])_{\upharpoonright V \cup \{x\}}$.} Therefore, $M \models_{X_{\upharpoonright V}} \exists x \psi$, as required.

				Conversely, suppose that $M \models_{X_{\upharpoonright V}} \exists x \psi$, that is, that $M \models_{X_{\upharpoonright V}[H'/x]} \psi$ for some $H'$. Then define the function $H: X \rightarrow \part(\dom(M)) \backslash \{x\}$ so that $H(s) = H'(s_{\upharpoonright V})$ for all $s \in X$; now, $X_{\upharpoonright V}[H'/x] = (X[H/x])_{\upharpoonright V \cup \{x\}}$,\footnote{In brief, for all $s \in X$ and all $m \in \dom(M)$ we have that $m \in H'(s_{\upharpoonright V})$ if and only if $m \in H(s)$, by definition. Hence, for all such $s$ and $m$, $s_{\upharpoonright V}[m/x] \in X_{\upharpoonright V}[H'/x]$ if and only if $s[m/x] \in X[H/x]$.} and hence by induction hypothesis $M \models_X \exists x \psi$. 
			\item For all suitable teams $X$, $X[M/x]_{\upharpoonright V \cup \{x\}} = X_{\upharpoonright V}[M/x]$; and hence, $M \models_{X_{\upharpoonright V}} \forall x \psi \Leftrightarrow M \models_{X[M/x]_{\upharpoonright V \cup \{x\}}} \psi \Leftrightarrow M \models_{X[M/x]} \psi \Leftrightarrow M \models_X \forall x \psi$, as required.
	\end{enumerate}
\end{proof}
\section{Game theoretic semantics}
\label{sect:gamesem}
By this point, we have developed a team semantics for inclusion/exclusion logic and we have examined the relations between it and other logics of imperfect information. In this section, an equivalent game theoretic semantics for inclusion/exclusion logic will be developed; once this is done, the semantics for inclusion logic and for exclusion logic will simply be the restrictions of this semantics to the corresponding sublanguages. The connection between game semantics and team semantics, moreover, will allow us to revisit and further justify the distinction between lax and strict connectives introduced in Section \ref{subsect:laxstrict}. However, we will not discuss here the history or the motivations of game theoretic semantics, nor its connections to other game-theoretical approaches to formal semantics. The interested reader is referred to \cite{hintikka83} and \cite{hintikkasandu97} for a more philosophically oriented discussion of game theoretic semantics; in the rest of this section, we will content ourselves to present such a semantics for the case of I/E logic and prove its equivalence to team semantics. 
\begin{defin}[Semantic games for I/E logic]
	Let $\phi$ be an I/E logic formula, let $M$ be a first order model over a signature containing that of $\phi$ and let $X$ be a team over $M$ whose domain contains all free variables of $\phi$. Then the game $G^M_X(\phi)$ is defined as follows:
	\begin{itemize}
		\item There are two players, called $I$ and $II$;\footnote{These players can also be named Falsifier and Verifier, or Abelard and Eloise.}
		\item The positions of the game are expressions of the form $(\psi, s)$, where $\psi$ is an instance of a subformula of $\phi$ and $s$ is an assignment whose domain contains all free variables of $\psi$;
		\item The initial positions are all those of the form $(\phi, s)$ for $s \in X$;
		\item The terminal positions are those of the form $(\alpha, s)$, where $\alpha$ is a first order literal, an inclusion atom, or an exclusion atom;
		\item If $p = (\psi, s)$ is not a terminal position, the set $S(p)$ of its \emph{successors} is defined according to the following rules:
			\begin{enumerate}
				\item If $\psi$ is of the form $\theta_1 \vee \theta_2$ or $\theta_1 \wedge \theta_2$ then $S(p) = \{(\theta_1, s), (\theta_2, s)\}$;
				\item If $\psi$ is of the form $\exists x \theta$ or $\forall x \theta$ then $S(p) = \{(\theta, s[m/x]) : m \in \dom(M)\}$;
			\end{enumerate}
		\item If $p = (\psi, s)$ is not a terminal position, the \emph{active player} $T(p) \in \{I, II\}$ is defined according to the following rules:
			\begin{enumerate}
				\item If $\psi$ is of the form $\theta_1 \vee \theta_2$ or $\exists x \theta$ then $T(p) = II$;
				\item If $\psi$ is of the form $\theta_1 \wedge \theta_2$ or $\forall x \theta$ then $T(p) = I$;
			\end{enumerate}
		\item A terminal position $p = (\alpha, s)$ is \emph{winning} for Player $II$ if and only if 
			\begin{itemize}
				\item $\alpha$ is a first order literal and $M \models_s \alpha$ in the usual first order sense, or 
				\item $\alpha$ is an inclusion atom $\tuple t_1 \subseteq \tuple t_2$ and $s$ is any assignment, or 
				\item $\alpha$ is an exclusion atom $\tuple t_1 ~|~ \tuple t_2$ and $s$ is any assignment.
			\end{itemize}
			If a terminal position is not winning for Player $II$, it is winning for Player $I$. 
	\end{itemize}
\end{defin}
The definitions of \emph{play}, \emph{complete play} and \emph{winning play} are straightforward:
\begin{defin}
	Let $G^M_X(\phi)$ be a semantic game as above. Then a \emph{play} for $G^M_X(\phi)$ is a finite sequence of positions $p_1 \ldots p_n$ such that 
	\begin{itemize}
		\item $p_1$ is an initial position;
		\item For all $i = 2 \ldots n$, $p_i \in S(p_{i-1})$.
	\end{itemize}

	Such a play is said to be \emph{complete} if, furthermore, $p_n$ is a terminal position; and it is \emph{winning} for Player $II$ [$I$] if and only if $p_n$ is a winning position for $II$ [$I$].
\end{defin}

However, it will be useful to consider \emph{non-deterministic strategies} rather than deterministic ones only:
\begin{defin}
	Let $G^M_X(\phi)$ be a semantic game as above. Then a \emph{strategy} for Player $II$ $[I]$ in $G^M_X(\phi)$ is a function $\tau$ sending each position $p = (\psi, s)$ with $T(p) = II$ $[I]$ into some $\tau(p) \in \mathcal P(S(p)) \backslash \emptyset$.\\

	Such a strategy is said to be \emph{deterministic} if, for all such $p$, $\tau(p)$ is a singleton. 

	A play $p_1 \ldots p_n$ is said to \emph{follow} a strategy $\tau$ for $II$ [$I$] if and only if, for all $i \in 1 \ldots n-1$, 
	\[
		T(p_i) = II~[I] \Rightarrow p_{i+1} \in \tau(p_i).
	\]

	A strategy $\tau$ for $II$ $[I]$ is \emph{winning} for Player $II$ [$I$] if and only if all complete plays $\tuple p$ which follow $\tau$ are winning for $II$ [$I$]. \\

	The set of all plays of $G^M_X(\phi)$ in which Player $\rho \in \{I, II\}$ follows strategy $\tau$ will be written as $P(G^M_X(\phi), \rho:\tau)$.
\end{defin}
So far, inclusion and exclusion atoms play little role in our semantics, as they always correspond to winning positions for Player $II$. Similarly to dependence atoms in \cite{vaananen07}, however, inclusion and exclusion atoms restrict the \emph{set of strategies} available to Player $II$. This is modeled by the following definition of \emph{uniform strategy}:
\begin{defin}
	Let $G^M_X(\phi)$ be a semantic game as above. Then a strategy $\tau$ for Player $II$ is said to be \emph{uniform} if and only if, for all complete plays $p_1 \ldots p_n = \tuple p \in P(G^M_X(\phi), II:\tau)$, 
	\begin{enumerate}
		\item If $p_n$ is of the form $p_n = (\tuple t_1 \subseteq \tuple t_2, s)$ then there exists a play $q_1 \ldots q_{n'} \in P(G^M_X(\phi), II:\tau)$ such that $q_{n'} = (\tuple t_1 \subseteq \tuple t_2, s')$ for the same instance of the inclusion atom and such that $\tuple t_2 \langle s'\rangle = \tuple t_1\langle s\rangle$;
		\item If $p_n$ is of the form $p_n = (\tuple t_1 ~|~ \tuple t_2, s)$ then for all plays $q_1 \ldots q_{n'} \in P(G^M_X(\phi), II:\tau)$ such that $q_{n'} = (\tuple t_1 ~|~ \tuple t_2, s')$ for the same instance of the exclusion atom it holds that $\tuple t_1\langle s\rangle \not = \tuple t_2 \langle s'\rangle$.
	\end{enumerate}
\end{defin}
This notion of uniformity also makes it clear why in inclusion logic there is a difference between working with non-deterministic and with deterministic strategies: whereas the uniformity condition for dependence atoms restrict the information available to Player $II$ thorough the game, the one for inclusion atoms requires that the set of \emph{possible plays}, given a strategy for Player $II$, is \emph{closed} with respect to certain monotonically increasing operators. This phenomenon does not occur for the uniformity conditions of exclusion atoms, whose form is more similar to the conditions of the dependence atom in \cite{vaananen07}. The next definition and the lemmas following it will be of some use in order to prove the main result of this section: 
\begin{defin}
	Let $G^M_X(\phi)$ be a game as in our previous definitions and let $\tau$ be a strategy for Player $II$ in it. Furthermore, let $\psi$ be an instance of a subformula of $\phi$ and let 
	\[
		Y = \{s : \mbox{ there is a play in } P(G^M_X, II: \tau) \mbox{ passing through } (\psi, s)\}.
	\]
	Furthermore, let $\tau'$ be the restriction of $\tau$ to $G^M_Y(\psi)$, in the sense that $\tau'(\theta, s) = \tau(\theta, s)$ for all $\theta$ contained in $\psi$ and for all assignments $s$. Then we say that $(Y, \psi, \tau')$ is a $M$-\emph{successor} of $(X, \phi, \tau)$, and we write 
	\[
		(Y, \psi, \tau') \leq_M (X, \phi, \tau).
	\]
\end{defin}
From a game-theoretical perspective, the notion of $M$-successor can be seen as a generalization of the notion of the concepts of \emph{subgame} and \emph{substrategy} to multiple initial positions and to games of imperfect information.

\begin{lemma}
	\label{lemma:MSucch_Uniq}
	Let $G^M_X(\phi)$ be a semantic game for I/E logic, and let $\psi$ be an instance of a subformula in $\phi$. Then there exists precisely one team $Y$ and precisely one strategy $\tau'$ for $G^M_Y(\psi)$ such that $(Y, \psi, \tau') \leq_M (X, \phi, \tau)$. 
\end{lemma}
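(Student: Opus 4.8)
The plan is to exploit the fact that, once the instance $\psi$ is fixed, the definition of $M$-successor already prescribes both $Y$ and $\tau'$ by explicit formulas; so the real content of the lemma is to verify that these formulas genuinely produce \emph{a team} $Y$ and \emph{a strategy} $\tau'$ for $G^M_Y(\psi)$, after which uniqueness will be immediate. Accordingly I would split the argument into a well-definedness part (for $Y$ and for $\tau'$) followed by a one-line uniqueness part.

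First I would argue that $Y$ is a legitimate team, i.e.\ that all of its assignments share a common domain. Since $\psi$ is a fixed \emph{instance} of a subformula, there is a unique path in the syntax tree of $\phi$ from the root down to this occurrence of $\psi$. Along any play in $P(G^M_X(\phi), II:\tau)$ reaching a position $(\psi, s)$, the assignment begins as some $s_0 \in X$ with $\dom(s_0) = \dom(X)$ and is then modified step by step: the rules for $\vee$ and $\wedge$ leave the assignment unchanged, whereas those for $\exists x$ and $\forall x$ replace it by $s[m/x]$, extending its domain by $x$. Because the path to this occurrence of $\psi$ is fixed, the set $W$ of variables bound by quantifiers along it is fixed as well, so every $s$ with $(\psi, s)$ lying on such a play has domain exactly $\dom(X) \cup W \supseteq \free(\psi)$. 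Hence $Y$ is a well-defined team (possibly empty) with this common domain.

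Next I would check that $\tau'$ is a strategy for $G^M_Y(\psi)$. The positions of $G^M_Y(\psi)$ are the pairs $(\theta, s)$ with $\theta$ an instance of a subformula of $\psi$ and $\dom(s) \supseteq \free(\theta)$; since $\psi$ is a subformula of $\phi$, each such $\theta$ is also a subformula of $\phi$, so these positions are among those of $G^M_X(\phi)$. Moreover the successor map $S$ and the active-player map $T$ depend only on the shape of $\theta$, on $s$, and on $M$, not on the initial team, so they coincide in the two games. Consequently the restriction $\tau'(\theta, s) := \tau(\theta, s)$ is defined on every Player-$II$ position of $G^M_Y(\psi)$ and inherits $\tau'(\theta, s) \in \part(S(\theta, s)) \backslash \emptyset$ from $\tau$, which is exactly what is required of a strategy.

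The admissible pair $(Y, \tau')$ just constructed witnesses existence of an $M$-successor. For uniqueness, the defining clauses leave no freedom: any $(Y', \psi, \tau'')$ that is an $M$-successor of $(X, \phi, \tau)$ must, by definition, have $Y'$ equal to the set of all $s$ such that some play in $P(G^M_X(\phi), II:\tau)$ passes through $(\psi, s)$, and $\tau''$ equal to the restriction of $\tau$ to the subformulas of $\psi$; hence $Y' = Y$ and $\tau'' = \tau'$. The only step demanding genuine care is the domain computation of the second paragraph, for it is precisely there that the hypothesis that $\psi$ is an \emph{instance} of a subformula (rather than a subformula up to syntactic equality) is used: distinct occurrences of the same subformula would in general sit under different quantifier prefixes and so would yield assignments of differing domains, in which case $Y$ would fail to be a team.
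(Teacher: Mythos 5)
Your proof is correct and follows essentially the same route as the paper, whose entire proof of this lemma is ``Obvious from definition'': the $M$-successor definition explicitly prescribes both $Y$ and $\tau'$, so uniqueness is immediate and only well-definedness needs checking. Your elaboration of the well-definedness details --- in particular the observation that the \emph{instance} hypothesis fixes the quantifier prefix and hence guarantees all assignments in $Y$ share a common domain --- is a sound and worthwhile unpacking of what the paper treats as obvious.
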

\begin{proof}
	Obvious from definition.
\end{proof}
\begin{lemma}
	\label{lemma:MSucc_Play}
	Let $G^M_X(\phi)$ be a semantic game as usual, and let $\tau$ be a strategy for Player $II$ in it. Furthermore, let $\psi$ be an instance of a subformula of $\phi$ and let $Y$, $\tau'$ be such that $(Y, \psi, \tau') \leq_M (X, \phi, \tau)$. 
	Then 
	\begin{enumerate}
		\item For any play $p_1 \ldots p_n = \tuple p \in P(G^M_X(\phi), II:\tau)$ passing through the subformula $\psi$ there exist a $k \in 1 \ldots n$ such that $p_k \ldots p_n$ is a play in $P(G^M_Y(\psi), II:\tau')$; 
		\item For any play $q_1 \ldots q_m = \tuple q \in P(G^M_Y(\psi), II:\tau')$ there exists a $k \in 1 \ldots n$ and positions $p_1 \ldots p_{k}$ of the game $G^M_X(\phi)$ such that $p_1 \ldots p_k q_1 \ldots q_m$ is a play in $(G^M_X(\psi), II:\tau)$.
	\end{enumerate}
\end{lemma}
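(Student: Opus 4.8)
The plan is to prove both parts by unwinding the definitions of $M$-successor, of the successor relation $S$, of the active-player function $T$, and of what it means to follow a strategy, exploiting the crucial fact that the semantic game is \emph{memoryless}: the successor set $S(p)$, the active player $T(p)$, and the value $\tau(p)$ of any strategy all depend only on the current position $p = (\psi, s)$, and not on the history of the play. Combined with the fact that $\tau'$ is by definition literally the restriction of $\tau$ to positions lying inside the chosen instance of $\psi$, this makes both directions essentially mechanical; the only genuine care required is the bookkeeping of \emph{instances} of subformulas and the gluing of two plays at a single junction point. I would also record at the outset that the formula component of a position strictly decreases along any play (each move passes to a proper subformula, or to a subformula with a modified assignment for quantifiers), so a given instance of $\psi$ occurs at most once along any single play.

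For part 1, let $p_k = (\psi, s)$ be the unique position of $\tuple p$ of the form $(\psi, s)$ for the instance under consideration; it exists by hypothesis and is unique by the remark above. Since $\tuple p \in P(G^M_X(\phi), II:\tau)$, the very definition of $Y$ gives $s \in Y$, so $p_k$ is an initial position of $G^M_Y(\psi)$. Because the game only descends into subformulas of the current formula, every later position $p_{k+1}, \ldots, p_n$ is an instance of a subformula of this instance of $\psi$, so the tail $p_k \ldots p_n$ remains inside $\psi$. The relations $p_{i+1} \in S(p_i)$ continue to hold verbatim, as $S$ is position-local, and at each position with $T(p_i) = II$ we have $p_{i+1} \in \tau(p_i) = \tau'(p_i)$ since $\tau'$ agrees with $\tau$ on positions inside $\psi$. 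Hence $p_k \ldots p_n$ is a play of $G^M_Y(\psi)$ following $\tau'$, as required.

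For part 2, I would run the same observations in reverse and then glue. Given $\tuple q = q_1 \ldots q_m \in P(G^M_Y(\psi), II:\tau')$, its initial position is $q_1 = (\psi, s)$ with $s \in Y$. By the definition of $Y$ there is a play $r_1 \ldots r_l \in P(G^M_X(\phi), II:\tau)$ passing through $(\psi, s)$, say with $r_j = (\psi, s) = q_1$; take the prefix $p_1 \ldots p_k := r_1 \ldots r_{j-1}$. I would then check that $p_1 \ldots p_k q_1 \ldots q_m$ is a play of $G^M_X(\phi)$ following $\tau$: the initial position is $r_1$; the successor step at the junction is $q_1 = r_j \in S(r_{j-1})$, which holds since $\tuple r$ is a play; the steps within the prefix and within $\tuple q$ hold since $\tuple r$ and $\tuple q$ are themselves plays; and the strategy is respected on the prefix (since $\tuple r$ follows $\tau$), at the junction (again $r_j \in \tau(r_{j-1})$ whenever $T(r_{j-1}) = II$, as $\tuple r$ follows $\tau$), and within $\tuple q$ (since $\tuple q$ follows $\tau'$, which coincides with $\tau$ on positions inside $\psi$). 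The degenerate case $\psi = \phi$ is trivial, since then $Y = X$, $\tau' = \tau$, and $\tuple q$ is already a play of the big game with empty prefix.

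The only real obstacle is conceptual rather than technical: one must be convinced that attaching an arbitrary subgame play $\tuple q$ to a prefix borrowed from a \emph{different} witnessing play $\tuple r$ yields a legitimate play following $\tau$. This is exactly where memorylessness is essential --- because no information beyond the current position is ever used to determine legal successors or the strategy's choices, the two fragments cannot interfere, and the concatenation is automatically consistent with both $S$ and $\tau$. Once this is internalized, everything else is a direct verification against the definitions.
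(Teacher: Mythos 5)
Your proof is correct and follows essentially the same route as the paper's: part 1 restricts the tail of the play using the fact that $\tau'$ agrees with $\tau$ on positions inside $\psi$, and part 2 glues a prefix of a witnessing play (guaranteed by the definition of $Y$) onto the subgame play. If anything, your verification of the junction step in part 2 (that $q_1 = r_j \in S(r_{j-1})$ and that the strategy condition holds there) is spelled out more explicitly than in the paper, which passes over it silently.
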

\begin{proof}
	\begin{enumerate}
		\item Consider any play $p_1 \ldots p_n$ as in our hypothesis, and let $k \in 1 \ldots n$ be such that $p_k = (\psi, s)$ for some assignment $s$. Then, by definition of $M$-successors, $s \in Y$ and $p_k$ is a possible initial position of $G^M_Y(\psi)$; furthermore, again by the definition of $M$-successor, we have that, for all $i = k \ldots n-1$, $\tau'(p_i) = \tau(p_i) \ni p_{i+1}$. \\

			Hence, $p_k \ldots p_n$ is a play in $P(G^M_Y(\psi), II:\tau')$, as required.
		\item Consider any play $q_1 \ldots q_m$ as in our hypothesis, and hence let $q_1 = (\psi, s)$ for some $s \in Y$. Then, by definition, there exists a play $p_1 \ldots p_n$ in $P(G^M_X(\psi), II:\tau)$ such that $p_{k+1} = q_1 = (\psi, s)$ for some $k \in 0 \ldots n-1$. But $\tau'$ behaves like $\tau$, and hence $\tau(q_i) = \tau'(q_i) \ni q_{i+1}$ for all $i = 1 \ldots m-1$. Thus, $p_1 \ldots p_{k}q_1 \ldots q_m$ is a play in $(G^M_X(\psi), II:\tau)$, as required.
	\end{enumerate}
\end{proof}
\begin{lemma}
	\label{lemma:MSucc_Sub}
	Let $G^M_X(\phi)$ be a semantic game as usual, and let $\tau$ be a strategy for Player $II$ in it. Furthermore, let $\psi$ be an instance of a subformula of $\phi$ and let $Y$, $\tau'$ be such that $(Y, \psi, \tau') \leq_M (X, \phi, \tau)$.\\

	Then 
	\begin{enumerate}
		\item If $\tau$ is winning for $II$ in $G^M_X(\phi)$ then $\tau'$ is winning for $II$ in $G^M_Y(\psi)$;
		\item If $\tau$ is uniform in $G^M_X(\phi)$ then $\tau'$ is uniform in $G^M_Y(\psi)$;
		\item If $\tau$ is deterministic in $G^M_X(\phi)$ then $\tau'$ is deterministic in $G^M_Y(\psi)$. 
	\end{enumerate}
\end{lemma}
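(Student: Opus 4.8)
The plan is to reduce all three parts to the play-correspondence established in Lemma \ref{lemma:MSucc_Play}, which lets us pass freely between complete plays of $G^M_Y(\psi)$ following $\tau'$ and tails of complete plays of $G^M_X(\phi)$ following $\tau$. The observation underlying everything is that whether a terminal position $(\alpha, s)$ is winning for Player $II$, and whether it meets the inclusion or exclusion uniformity conditions, depends only on the atom $\alpha$ and the assignment $s$ sitting at that position, and not on the history of the play that reached it.

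First I would dispatch the winning case. Given any complete play $\tuple q = q_1 \ldots q_m \in P(G^M_Y(\psi), II:\tau')$, part (2) of Lemma \ref{lemma:MSucc_Play} supplies a prefix $p_1 \ldots p_k$ so that $p_1 \ldots p_k q_1 \ldots q_m$ is a complete play in $P(G^M_X(\phi), II:\tau)$. Since $\tau$ is winning, its terminal position $q_m$ is winning for $II$; as being winning is a local property of $(\alpha, s)$, the same $q_m$ is a winning terminal position in $G^M_Y(\psi)$, so $\tau'$ is winning. Part (3) is immediate: $\tau'$ is by definition the restriction of $\tau$, so $\tau'(\theta, s) = \tau(\theta, s)$ is a singleton whenever $\tau$ is deterministic.

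The main work is part (2), uniformity. For an exclusion atom the argument mirrors the winning case: given two complete plays of the subgame ending at the same instance of $\tuple t_1 ~|~ \tuple t_2$, say at $(\tuple t_1 ~|~ \tuple t_2, s)$ and $(\tuple t_1 ~|~ \tuple t_2, s')$, I lift both to complete plays of $G^M_X(\phi)$ following $\tau$ via Lemma \ref{lemma:MSucc_Play}(2), and uniformity of $\tau$ yields $\tuple t_1\langle s\rangle \not= \tuple t_2\langle s'\rangle$ directly. The inclusion case is the subtle one, because there the uniformity condition is existential and the witness must be produced \emph{inside} the subgame. Given a complete play of $G^M_Y(\psi)$ ending at $(\tuple t_1 \subseteq \tuple t_2, s)$, I lift it to a play of $G^M_X(\phi)$ and apply uniformity of $\tau$ to obtain a play in the full game ending at $(\tuple t_1 \subseteq \tuple t_2, s')$ with $\tuple t_2\langle s'\rangle = \tuple t_1\langle s\rangle$.

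The point requiring care is that this witnessing play must be shown to pass through the relevant instance of $\psi$: since the two positions involve the same instance of the inclusion atom, an instance occurring inside $\psi$, and the subformula occurrences form a tree, any play reaching that occurrence must descend through the occurrence of $\psi$. Having established this, Lemma \ref{lemma:MSucc_Play}(1) truncates the witnessing play to a play of $G^M_Y(\psi)$ following $\tau'$ and ending at $(\tuple t_1 \subseteq \tuple t_2, s')$, which is exactly the witness demanded by the uniformity condition for $\tau'$. I expect this verification, that the witnessing play genuinely lives inside the subgame, to be the only nontrivial step; the remaining bookkeeping is routine once the play-correspondence of Lemma \ref{lemma:MSucc_Play} is in hand.
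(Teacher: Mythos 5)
Your proof is correct and follows essentially the same route as the paper's: both parts use the play-correspondence of Lemma \ref{lemma:MSucc_Play} to lift plays of $G^M_Y(\psi)$ into $G^M_X(\phi)$, and both isolate the same key point in the inclusion case, namely that the witnessing play supplied by uniformity of $\tau$ must pass through the occurrence of $\psi$ (since it ends at the same instance of the inclusion atom) and can therefore be truncated back into the subgame. Your handling of the winning and deterministic cases likewise matches the paper's argument.
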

\begin{proof}
	\begin{enumerate}
		\item Suppose that $\tau$ is winning\footnote{Here and in the rest of the work, when we write ``winning'' without specifying the player we mean ``winning for Player $II$''.}, and consider any play $q_1 \ldots q_m = \tuple q \in P(G^M_Y(\psi), II: \tau')$. Then, by Lemma \ref{lemma:MSucc_Play}, there exists a play $p_1 \ldots p_n \in P(G^M_X(\phi), II:\tau)$ such that $p_k \ldots p_n = q_1 \ldots q_m$ for some $k \in 1 \ldots m$. But $\tau$ is a winning strategy for $II$ in $G^M_X(\phi)$ and therefore $p_n$ is a winning position, as required.
		\item Suppose that $\tau$ is uniform, and consider any play $q_1 \ldots q_m = \tuple q \in P(G^M_Y(\psi), II: \tau')$.\\

			Then, again, there exists a play  $p_1 \ldots p_n = \tuple p \in P(G^M_X(\psi), II:\tau)$ such that $p_k \ldots p_n = q_1 \ldots q_m$ for some $k$.\\

			Now suppose that $p_n = q_m = (\tuple t_1 \subseteq \tuple t_2, s)$: then, since $\tau$ is a uniform strategy, there exists another play $p'_1 \ldots p'_{n'}$ in $(G^M_X(\phi), II:\tau)$ such that $p'_{n'} = (\tuple t_1 \subseteq \tuple t_2, s')$ for the same instance of the inclusion atom and for a $s'$ such that $t_2 \langle s'\rangle = t_1 \langle s\rangle$.\\

			Since $p_n$ and $p'_{n'}$ correspond the same dependency atom of $\tuple p$, it must be the case that the play $p'_1 \ldots p'_{n'}$ passes through $\psi$; and therefore, by Lemma \ref{lemma:MSucc_Play}, there exists some $j \in 1 \ldots n'$ such that $p'_j \ldots p'_{n'}$ is a play in $P(G^M_Y(\psi), \tau')$, thus satisfying the uniformity condition for $\tau'$. \\

			Now suppose that $p_n = q_m = (\tuple t_1 ~|~ \tuple t_2, s)$ instead, and consider any other play $q'_1 \ldots q'_{m'} \in P(G^M_Y(\psi), \tau')$ such that $q'_m = (\tuple t_1 ~|~ \tuple t_2, s')$ for the same instance of the exclusion atom. Then there exist positions $p'_1 \ldots p'_{k'}$ such that $p'_1 \ldots p'_{k'}q'_1 \ldots q'_{m'}$ is a play in $P(G^M_X(\phi), II:\tau)$. But $\tau$ is uniform, and therefore $s(\tuple t_1) \not = s'(\tuple t_2)$, as required.
	\item This follows trivially by the definition of $M$-successor.
	\end{enumerate}
\end{proof}
\begin{lemma}
	\label{lemma:MSucc_Sup}
	Let $G^M_X(\phi)$ be a semantic game for I/E logic and let $\tau$ be a strategy for $II$ in it. Furthermore, let $\psi_1 \ldots \psi_t$ be an enumeration of all immediate subformulas of $\phi$, and let $Y_1 \ldots Y_t$, $\tau_1 \ldots \tau_t$ be such that $(Y_i, \psi_i, \tau_i) \leq_M (X, \phi, \tau)$ for all $i \in 1 \ldots t$. Then 
	\begin{enumerate}
		\item If all $\tau_i$ are winning in $G^M_{Y_i}(\psi_i)$ then $\tau$ is winning in $G^M_X(\phi)$; 
		\item If all $\tau_i$ are uniform in $G^M_{Y_i}(\psi_i)$ then $\tau$ is uniform in $G^M_X(\phi)$;
		\item If all $\tau_i$ are deterministic in $G^M_{Y_i}(\psi_i)$ and $T(\phi) = I$\footnote{With a slight abuse of notation, we say that $T(\psi) = \alpha$ if $T(\psi, s) = \alpha$ for all suitable assignments $s$. In other words, $T(\psi) = I$ if $\psi$ is of the form $\psi_1 \wedge \psi_2$ or of the form $\forall v  \psi_1$, and $T(\psi) = II$ if $\psi$ is of the form $\psi_1 \vee \psi_2$ or $\exists v \psi_1$.} then $\tau$ is deterministic;
		\item If all $\tau_i$ are deterministic in $G^M_{Y_i}(\psi_i)$, $T(\phi) = II$ and $|\tau(\phi, s)| = 1$ for all $s \in Y$ then $\tau$ is deterministic.
	\end{enumerate}
\end{lemma}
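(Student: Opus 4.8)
The plan is to derive all four parts from a single structural observation together with Lemma \ref{lemma:MSucc_Play}, which is precisely the tool for passing between plays of $G^M_X(\phi)$ and plays of the $M$-successor games $G^M_{Y_i}(\psi_i)$. Since $\phi$ is compound (it has immediate subformulas), the key preliminary fact I would record is that every complete play $p_1 \ldots p_n \in P(G^M_X(\phi), II:\tau)$ begins with a single move out of the root $(\phi, s)$ and thereafter stays inside exactly one immediate subformula instance $\psi_i$; correspondingly, each terminal atom instance of $\phi$ lies inside a unique $\psi_i$. Thus there is a position index $k$ with $p_k = (\psi_i, \cdot)$, and by Lemma \ref{lemma:MSucc_Play}(1) the tail $p_k \ldots p_n$ is a complete play of $G^M_{Y_i}(\psi_i)$ under the restricted strategy $\tau_i$, where $(Y_i, \psi_i, \tau_i) \leq_M (X, \phi, \tau)$.

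For part 1 I would take an arbitrary complete $\tau$-play, locate its subformula $\psi_i$, and apply Lemma \ref{lemma:MSucc_Play}(1); since $\tau_i$ is winning, the common terminal position $p_n$ is winning for $II$, so $\tau$ is winning. For part 2 I split on the type of terminal atom. If $p_n = (\tuple t_1 \subseteq \tuple t_2, s)$, I pass to the tail in $G^M_{Y_i}(\psi_i)$, use uniformity of $\tau_i$ to obtain a subgame play ending at $(\tuple t_1 \subseteq \tuple t_2, s')$ with $\tuple t_2\langle s'\rangle = \tuple t_1\langle s\rangle$, and then invoke Lemma \ref{lemma:MSucc_Play}(2) to prepend a prefix and lift this witness back to a genuine $\tau$-play of $G^M_X(\phi)$ with the same terminal position, verifying the inclusion clause. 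If $p_n = (\tuple t_1 ~|~ \tuple t_2, s)$, I observe that any two $\tau$-plays ending at the same instance of this atom necessarily pass through the same $\psi_i$; their tails are $\tau_i$-plays of $G^M_{Y_i}(\psi_i)$, and uniformity of $\tau_i$ forces $\tuple t_1\langle s\rangle \neq \tuple t_2\langle s'\rangle$, which is exactly the required condition in the full game because it depends only on the two terminal positions.

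For parts 3 and 4 I would argue that every non-terminal position $p$ with $T(p) = II$ at which $\tau$ must choose is either the root $(\phi, s)$ or lies strictly inside some $\psi_i$. By the definition of $M$-successor, $\tau$ agrees with $\tau_i$ on the latter positions, so determinism of each $\tau_i$ makes all those choices singletons. When $T(\phi) = I$ the root is not a choice point for $II$, so no further checking is needed and part 3 follows at once; when $T(\phi) = II$ the only remaining choice points are the roots $(\phi, s)$, and these are singletons by the added hypothesis $|\tau(\phi, s)| = 1$, giving part 4.

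The main obstacle I anticipate is the bookkeeping in part 2. I must be careful that each terminal atom instance determines its containing immediate subformula $\psi_i$ uniquely, so that the uniformity conditions genuinely transfer rather than mixing witnesses from different subformulas; and I must check that the prefix supplied by Lemma \ref{lemma:MSucc_Play}(2) when lifting the inclusion witness leaves the terminal position (and hence the relevant assignment) untouched, so that the inclusion and exclusion conditions are preserved in the full game. Once these points are pinned down, each of the four implications reduces to a direct application of Lemma \ref{lemma:MSucc_Play} and the definition of $M$-successor.
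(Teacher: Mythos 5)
Your proposal is correct and follows essentially the same route as the paper's proof: decompose each complete $\tau$-play into the root move followed by a tail that Lemma \ref{lemma:MSucc_Play} identifies as a $\tau_i$-play of the unique $M$-successor game $G^M_{Y_i}(\psi_i)$, transfer winning and uniformity through that correspondence (lifting the inclusion witness back via Lemma \ref{lemma:MSucc_Play}(2), and using that two plays reaching the same exclusion-atom instance pass through the same $\psi_i$), and for determinism observe that the only $II$-choice points not inside some $\psi_i$ are the roots $(\phi,s)$. The two bookkeeping points you flag (uniqueness of the containing $\psi_i$ and the prefix not disturbing the terminal position) indeed hold and are exactly what the paper's argument implicitly relies on.
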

\begin{proof}
	\begin{enumerate}
		\item Suppose that all $\tau_i$ are winning for the respective games, and consider any play $p_1 \ldots p_n = \tuple p \in P(G^M_X(\phi), II:\tau)$. Then $p_2$ is of the form $(\psi_i, s)$ for some $i \in 1 \ldots t$ and some $s \in Y_i$; and therefore, $p_2 \ldots p_n \in P(G^M_{Y_i}(\psi), II:\tau_i)$. But $\tau_i$ is winning, and hence $p_n$ is a winning position for Player $II$, as required. 
		\item Suppose that all $\tau_i$ are uniform, and consider any play $p_1 \ldots p_n = \tuple p \in P(G^M_X(\phi), II: \tau)$: then, once again, $p_2 \ldots p_n \in P(G^M_{Y_i}(\psi_i), II:\tau_i)$ for some $i$. \\

			Suppose now that $p_n$ is $(\tuple t_1 \subseteq \tuple t_2, s)$: since $\tau_i$ is uniform, there exists another play $q_1 \ldots q_m = \tuple q \in P(G^M_{Y_i}(\psi_i), II:\tau_i)$ such that $q_m = (\tuple t_1 \subseteq \tuple t_2, s')$ for the same instance of the inclusion atom and 
			\[
				\tuple t_1 \langle s\rangle = \tuple t_2 \langle s'\rangle. 
			\]
			Finally, $\tuple q$ is contained in a play of $(G^M_X(\phi), II:\tau)$ and hence the uniformity condition is respected for $\tau$. \\

			Suppose instead that $p_n$ is $(\tuple t_1 ~|~ \tuple t_2, s)$, and consider any other play $p'_1 \ldots p'_{n'}$ of $P(G^M_X(\phi), II:\tau)$ such that $p'_{n'}$ is $(\tuple t_1 ~|~ \tuple t_2, s')$ for the same instance of $\tuple t_1 ~|~ \tuple t_2$. Now, since the same exclusion atom is reached, it must be the case that $p'_2 \ldots p'_{n'}$ is in $P(G^M_Y(\psi_i), II:\tau_i)$ too, for the same $i$; but then, since $\tau_i$ is uniform, $\tuple t_1 \langle s\rangle \not = \tuple t_2 \langle s'\rangle$, as required.
	\item Let $p$ be any position in $G^M_X(\phi)$ such that $T(p) = II$. Then $p$ corresponds to a subformula of some $\psi_i$, and hence $|\tau(p)| = |\tau_i(p)| = 1$.
	\item Let $p$ be any position in $G^M_X(\phi)$ such that $T(p) = I$. If $p$ is $(\phi, s)$ for some $s \in Y$, then $|\tau(p)| = 1$ by hypothesis; and otherwise, $p$ corresponds to a subformula of some $\psi_i$, and as in the previous case $|\tau(p)| = |\tau_i(p)| = 1$.
	\end{enumerate}
\end{proof}

Finally, the connection between semantic games and team semantics is given by the following theorem:
\begin{theo}
	\label{theo:game_team_lax}
	Let $M$ be a first order model, let $\phi$ be an inclusion logic formula over the signature of $M$ and let $X$ be a team over $M$ whose domain contains all free variables of $\phi$. Then Player $II$ has a uniform winning strategy in $G^M_X(\phi)$ if and only if $M \models_X \phi$ (with respect to the lax semantics).
\end{theo}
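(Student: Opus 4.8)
The plan is to prove the biconditional by structural induction on $\phi$, using the machinery of $M$-successors from Lemmas \ref{lemma:MSucch_Uniq}--\ref{lemma:MSucc_Sup} to pass between a strategy on $G^M_X(\phi)$ and its restrictions to the immediate subgames. In each inductive step I match the way the game decomposes at the top connective of $\phi$ against the corresponding \emph{lax} team-semantic rule, so that the induction hypothesis applied to the subformulas yields exactly the team-semantic condition for $\phi$. Since $\phi$ is an inclusion logic formula, the only atoms to treat are first order literals and inclusion atoms, so the uniformity condition for exclusion atoms never arises.

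First the base cases. If $\phi$ is a first order literal, the game $G^M_X(\phi)$ consists only of the terminal positions $(\phi, s)$ for $s \in X$; Player $II$ has a (vacuously uniform) winning strategy if and only if every such position is winning, i.e. if and only if $M \models_s \phi$ for all $s \in X$, which is precisely rule \textbf{TS-atom}. If $\phi$ is an inclusion atom $\tuple t_1 \subseteq \tuple t_2$, every terminal position is winning for $II$, so the trivial strategy always wins, and all the content lies in the \emph{uniformity} requirement, which here reads: for every $s \in X$ there is an $s' \in X$ with $\tuple t_2\langle s'\rangle = \tuple t_1\langle s\rangle$. This is verbatim rule \textbf{TS-inc}, so the inclusion-atom case pins down the one place where uniformity carries real information.

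For the inductive cases I argue as follows. At a conjunction $\psi \wedge \theta$ or a universal $\forall x\,\psi$ the active player is $I$, so the $M$-successor teams are forced to be $X$ (for each conjunct) and $X[M/x]$ respectively; combining Lemma \ref{lemma:MSucc_Sub} (restriction preserves winning and uniformity) with Lemma \ref{lemma:MSucc_Sup} (reassembling winning, uniform substrategies yields a winning, uniform strategy) reduces the existence of a uniform winning strategy to the same statement on the subformulas, matching \textbf{TS-$\wedge$} and \textbf{TS-$\forall$}. The interesting cases are the disjunction and the existential, where the active player is $II$ and \emph{non-determinism} of strategies is essential. At $(\psi \vee \theta, s)$ a non-deterministic choice $\tau(\psi\vee\theta,s) \subseteq \{(\psi,s),(\theta,s)\}$ induces the $M$-successor teams $Y = \{s \in X : (\psi,s)\in\tau(\psi\vee\theta,s)\}$ and $Z = \{s \in X : (\theta,s)\in\tau(\psi\vee\theta,s)\}$; since each such set of choices is nonempty, $Y \cup Z = X$, and because $\tau$ may select \emph{both} successors at once the teams $Y$ and $Z$ may overlap---which is exactly the content of the lax rule \textbf{TS-$\vee_L$} (as opposed to the disjoint split of \textbf{TS-$\vee_S$}). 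Likewise, a choice $\tau(\exists x\,\psi,s)$ of a nonempty set of successors corresponds to a function $H(s) = \{m : (\psi, s[m/x]) \in \tau(\exists x\,\psi, s)\} \in \part(\dom(M))\setminus\{\emptyset\}$, and the $M$-successor team for $\psi$ is precisely $X[H/x]$, matching \textbf{TS-$\exists_L$}. In each case the induction hypothesis on the subgames, together with Lemmas \ref{lemma:MSucc_Sub} and \ref{lemma:MSucc_Sup}, converts ``$II$ has a uniform winning strategy in $G^M_X(\phi)$'' into the lax team-semantic condition for $\phi$; conversely a team decomposition witnessing $M \models_X \phi$ is turned into a uniform winning strategy by choosing $\tau$ to realize exactly that decomposition and invoking Lemma \ref{lemma:MSucc_Sup}.

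The step I expect to be most delicate is the bookkeeping of uniformity for inclusion atoms across the induction, since uniformity is a \emph{global} condition quantifying over all plays that follow $\tau$, not a local property of a single branch. When one restricts to a subgame the set of surviving plays can shrink, so one must check that a play witnessing an inclusion atom stays inside the relevant subgame; this is exactly what Lemma \ref{lemma:MSucc_Play} guarantees, as the witnessing play reaches the \emph{same instance} of the atom and hence passes through the same subformula. This is precisely why Lemmas \ref{lemma:MSucc_Sub} and \ref{lemma:MSucc_Sup} are phrased in terms of instances of subformulas rather than of the bare atoms. Granting those lemmas, the remaining verifications in each case are routine.
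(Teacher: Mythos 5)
Your proposal is correct and follows essentially the same route as the paper's own proof: the same structural induction, the same $M$-successor lemmas, the same definitions of the teams $Y$, $Z$ and of the function $H$ extracted from a non-deterministic strategy, and the same identification of the inclusion-atom uniformity condition with rule \textbf{TS-inc}. The only divergence is that the paper also treats exclusion atoms (since it intends the theorem for full I/E logic, despite the statement saying ``inclusion logic formula''), but omitting that case is consistent with the statement as literally given and loses nothing, as the exclusion case is handled by the identical argument.
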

\begin{proof}
	The proof is by structural induction on $\phi$. 
	\begin{enumerate}
		\item If $\phi$ is a first order literal then the only strategy available to $II$ in $G^M_X(\phi)$ is the empty one. This strategy is always uniform, and the plays which follow it are of the form $\tuple p = p_1 = (\phi, s)$, where $s$ ranges over $X$. Such a play is \emph{winning} for $II$ if and only if $M \models_s \phi$ in the usual first-order sense; and hence, the strategy is winning for $II$ if and only if $M \models_s \phi$ for all $s \in X$, that is, if and only if $M \models_X \phi$. 
		\item If $\phi$ is an inclusion atom $\tuple t_1 \subseteq \tuple t_2$ then, again, the only strategy available to Player $II$ is the empty one and the plays which follow it are those of the form $\tuple p = p_1 = (\tuple t_1 \subseteq \tuple t_2, s)$ for some $s \in X$. 

			By the definition of the winning positions of $G^M_X(\phi)$, this strategy is winning; hence, it only remains to check whether it is uniform. 

			Now, in order for the strategy to be uniform it must be the case that for all plays $\tuple p = p_1 = (\tuple t_1 \subseteq \tuple t_2, s)$ where $s \in X$ there exists a play $\tuple q = q_1 = (\tuple t_1 \subseteq \tuple t_2, s')$, again for $s' \in X$, such that $\tuple t_2 \langle s' \rangle = \tuple t_1 \langle s \rangle$. But this can be the case if and only if $\forall s \in X \exists s' \in X \mbox{ s.t. } \tuple t_1 \langle s\rangle = \tuple t_2\langle s'\rangle$, that is, if and only if $M \models_X \tuple t_1 \subseteq \tuple t_2$.
		\item If $\phi$ is an exclusion atom $\tuple t_1 ~|~ \tuple t_2$, the only strategy for $II$ in $G^M_X(\phi)$ is, once again, the empty one. This strategy is always winning, and it is uniform if and only if for all plays $\tuple p = p_1 = (\tuple t_1 ~|~ \tuple t_2, s)$ and $\tuple q = q_1 = (\tuple t_1 ~|~ \tuple t_2, s')$ (for $s, s' \in X$) it holds that $\tuple t_1 \langle s \rangle \not = \tuple t_2 \langle s'\rangle$.

			But this is the case if and only if $M \models_X \tuple t_1 ~|~ \tuple t_2$, as required. 

		\item If $\phi$ is a disjunction $\psi \vee \theta$, suppose that $\tau$ is a uniform winning strategy for $II$ in $G^M_X(\psi \vee \theta)$. Then define the teams $Y, Z \subseteq X$ as follows:
			\begin{align*}
				&Y = \{s \in X : (\psi, s) \in \tau(\psi \vee \theta, s)\};\\
				&Z = \{s \in X : (\theta, s) \in \tau(\psi \vee \theta, s)\}.
			\end{align*}
			Then $Y \cup Z = X$: indeed, for all $s \in X$ it must be the case that $\emptyset \not = \tau(\psi \vee \theta, s) \subsetneq \{(\psi, s), (\theta, s)\}$. Furthermore, $Y \cap Z = \emptyset$.\\

			Now consider the following two strategies for $II$ in $G^M_Y(\psi)$ and $G^M_Z(\theta)$ respectively: 
			\begin{itemize}
				\item $\tau_1(p) = \tau(p)$ for all positions $p$ of $G^M_Y(\psi)$; 
				\item $\tau_2(p) = \tau(p)$ for all positions $p$ of $G^M_Z(\theta)$.
			\end{itemize}
			Since all positions of $G^M_Y(\psi)$ and of $G^M_Z(\theta)$ are also positions of $G^M_X(\psi \vee \theta)$, $\tau_1$ and $\tau_2$ are well-defined.\\

			Furthermore, $(Y, \psi, \tau_1) \leq_M (X, \phi, \tau)$ and $(Z, \psi, \tau_2) \leq_M (X, \phi, \tau)$; therefore, by Lemma \ref{lemma:MSucc_Sub}, $\tau_1$ and $\tau_2$ are uniform and winning for $G^M_Y(\psi)$ and $G^M_Z(\theta)$. By induction hypothesis, this implies that $M \models_Y \psi$ and $M \models_Z \theta$, and by the definition of the semantics for disjunction, this implies that $M \models_X \psi \vee \theta$. 

			Conversely, suppose that $M \models_X \psi \vee \theta$: then, by definition, there exist teams $Y$ and $Z$ such that $X = Y \cup Z$, $M \models_Y \psi$ and $M \models_Z \theta$. Then, by induction hypothesis, there exist uniform winning strategies $\tau_1 $ and $\tau_2$ for $II$ in $G^M_Y(\psi)$ and $G^M_Z(\theta)$ respectively. Then define the strategy $\tau$ for $II$ in $G^M_X(\psi \vee \theta)$ as follows:
			\begin{itemize}
				\item $\tau(\psi \vee \theta, s) = \left\{
					\begin{array}{l l}
						\{(\psi, s)\} & \mbox{ if } s \in Y \backslash Z;\\
						\{(\theta, s)\} & \mbox{ if } s \in Z \backslash Y;\\
						\{(\psi, s), (\theta, s)\} & \mbox{ if } s \in Y \cap Z;
					\end{array}
					\right.$
				\item If $p$ is $(\chi, s)$ for some $s$ and some formula $\chi$ contained in $\psi$, then $\tau(p) = \tau_1(p)$;
				\item If $p$ is $(\chi, s)$ for some $s$ and some $\chi$ contained in $\theta$, then $\tau(p) = \tau_2(p)$.
			\end{itemize}
			Then, by construction, we have that $(Y, \psi, \tau_1), (Z, \theta, \tau_2) \leq_M (X, \psi \vee \theta, \tau)$; furthermore, $\psi$ and $\theta$ are all the immediate subformulas of $\psi \vee \theta$, and $\tau_1$ and $\tau_2$ are winning and uniform by hypothesis. Therefore, by Lemma \ref{lemma:MSucc_Sup}, $\tau$ is a uniform winning strategy for $G^M_X(\psi \vee \theta)$, as required.

\item If $\phi$ is $\psi \wedge \theta$, suppose again that $\tau$ is a uniform winning strategy for $II$ in $G^M_X(\psi \wedge \theta)$. Then consider the two strategies for $II$ in $G^M_X(\psi)$ and $G^M_Z(\theta)$, respectively, defined as 
			\begin{itemize}
				\item $\tau_1(p) = \tau(p)$ for all positions $p$ of $G^M_X(\psi)$; 
				\item $\tau_2(p) = \tau(p)$ for all positions $p$ of $G^M_X(\theta)$.
			\end{itemize}
			Then $(X, \psi, \tau_1), (X, \theta, \tau_2) \leq_M (X, \psi \wedge \theta, \tau)$, and therefore by Lemma \ref{lemma:MSucc_Sub} $\psi$ and $\theta$ are uniform winning strategies. Hence, by induction hypothesis, $M \models_X \psi$ and $M \models_X \theta$, and therefore $M \models_X \psi \wedge \theta$.
			
			Conversely, suppose that $M \models_X \psi \wedge \theta$. Then $M \models_X \psi$ and $M \models_X \theta$, and therefore $II$ has uniform winning strategies $\tau_1$ and $\tau_2$ for $G^M_X(\psi)$ and $G^M_X(\theta)$ respectively. Now define the strategy $\tau$ for $II$ in $G^M_X(\psi \wedge \theta)$ as follows:
			\[
			\mbox{for all } s \in X, \tau(\chi, s) = \left\{\begin{array}{l l}
				\tau_1(\chi, s) & \mbox{ if } \chi \mbox{ is contained in } \psi;\\
				\tau_2(\chi, s) & \mbox{ if } \chi \mbox{ is contained in } \theta.
			\end{array}
				\right.
			\]
			Then $(X, \psi, \tau_1), (X, \theta, \tau_2) \leq_M (X, \psi \wedge \theta, \tau)$ and $\psi, \theta$ are all immediate subformulas of $\psi \wedge \theta$; hence, by Lemma \ref{lemma:MSucc_Sup}, $\tau$ is a uniform winning strategy for $II$ in $G^M_X(\phi)$, as required.
		\item If $\phi$ is $\exists x \psi$, suppose that $\tau$ is a uniform winning strategy for $II$ in $G^M_X(\exists x \psi)$. Then define the function $H : X \rightarrow \mathcal P(\dom(M)) \backslash \emptyset$ as $H(s) = \{m \in M: (\psi, s[m/x]) \in \tau(\exists x \psi, s)\}$ and consider the following strategy $\tau'$ for $II$ in $G^M_{X[H/x]}(\psi)$: 
			\[
				\tau'(p) = \tau(p) \mbox{ for all suitable } p. 
			\]
			$\tau'$ is well-defined, because any position of $G^M_{X[H/x]}(\psi)$ is also a possible position of $G^M_X(\exists x \psi)$. Furthermore, $(X[H/x], \psi, \tau') \leq_M (X, \exists x \psi, \tau)$, and therefore $\tau'$ is a uniform winning strategy for $II$ in $G^M_{X[H/x]}(\psi)$. By induction hypothesis, this implies that $M \models_{X[H/x]} \psi$, and hence that $M \models_X \exists x \psi$. \\

			Conversely, suppose that $M \models_X \exists x \psi$; then, there exists a function $H$ such that $M \models_{X[H/x]} \psi$. By induction hypothesis, this means that there exists a winning strategy $\tau'$ for $II$ in $G^M_{X[H/x]}(\psi)$. Now consider the following strategy $\tau$ for $II$ in $G^M_{X}(\exists x \psi)$: 
			\begin{align*}
				&\tau(\exists x \psi, s) = \{(\psi, s[m/x]) : m \in H(s)\};\\
				&\tau(\theta, s) = \tau'(\tau, s) \mbox{ for all } \tau \mbox{ contained in } \psi \mbox{ and all } s.
			\end{align*}
			Then $(X[H/x], \psi, \tau') \leq_M (X, \exists x \psi, \tau)$, and $\psi$ is the only direct subformula of $\exists x \psi$; hence, $\tau$ is uniform and winning, as required. 
		\item If $\phi$ is $\forall x \psi$, suppose that $\tau$ is a uniform winning strategy for $II$ in $G^M_X(\forall x \psi)$. Then consider the strategy $\tau'$ for $II$ in $G^M_{X[M/x]}(\psi)$ given by 
			\begin{align*}
				&\tau'(\theta, s) = \tau(\theta, s) \mbox{ for all } \theta \mbox{ contained in } \psi \mbox{ and all } s.
			\end{align*}
			Then $(X[M/x], \psi, \tau') \leq_M (X, \forall x \psi, \tau)$, and hence $\tau'$ is uniform and winning. By induction hypothesis, this means that $M \models_{X[M/x]} \psi$, and hence that $M \models_X \forall x \psi$.\\

			Conversely, suppose that $M \models_X \forall x \psi$. Then $M \models_{X[M/x]} \psi$, and hence there exists a uniform winning strategy $\tau'$ for $II$ in $G^M_{X[M/x]}(\psi)$. Then consider the strategy $\tau$ for $II$ in $G^M_X(\psi)$ given by 
			\begin{align*}
				&\tau(\theta, s) = \tau'(\theta, s) \mbox{ for all } \theta \mbox{ contained in } \psi \mbox{ and all } s.
			\end{align*}
			This strategy is well-defined, since the first move of $G^M_X(\forall x \psi)$ is Player $I$'s; furthermore, 
	\[
		(X[M/x], \psi, \tau') \leq_M (X, \forall x \psi, \tau)
	\] 
and therefore $\tau$ is uniform and winning, as required.
	\end{enumerate}
\end{proof}
Hence, we have a game theoretic semantics which is equivalent to the lax team semantics for inclusion/exclusion logic; and of course, the game theoretic semantics for inclusion and exclusion logic are simply the restrictions of this semantics to the corresponding languages. As was argued previously, the strict team semantics for disjunction and existential quantification is somewhat less natural when it comes to inclusion logic or I/E logic. However, there exists a link between strict team semantics and \emph{deterministic} strategies:
\begin{theo}
	\label{theo:game_team_strict}
	Let $M$ be a first order model, let $\phi$ be an inclusion logic formula over the signature of $M$ and let $X$ be a team over $M$ whose domain contains all free variables of $\phi$. Then Player $II$ has a uniform, \emph{deterministic} winning strategy in $G^M_X(\phi)$ if and only if $M \models_X \phi$ (with respect to the \emph{strict} semantics).	
\end{theo}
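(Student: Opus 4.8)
The plan is to prove this by structural induction on $\phi$, closely paralleling the proof of Theorem \ref{theo:game_team_lax} but additionally tracking the \emph{determinism} of the strategy at each step. The winning and uniformity parts of the argument are literally unchanged, being handled by the same appeals to Lemmas \ref{lemma:MSucc_Sub} and \ref{lemma:MSucc_Sup}, so the only genuinely new content lives in the clauses for disjunction and existential quantification. These are precisely the two operators whose strict and lax semantic rules differ, and also the two whose root positions belong to Player $II$, so it is no accident that determinism should interact with them.

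For the base cases (first order literals, inclusion atoms, exclusion atoms) the only strategy available to Player $II$ is the empty one, which is trivially deterministic; hence these cases are identical to the corresponding cases of Theorem \ref{theo:game_team_lax}, and since strict and lax semantics agree on atoms, nothing changes.

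For a disjunction $\psi \vee \theta$, given a uniform deterministic winning strategy $\tau$, determinism forces $\tau(\psi \vee \theta, s)$ to be a singleton for each $s \in X$, so setting $Y = \{s : (\psi,s) \in \tau(\psi \vee \theta, s)\}$ and $Z = \{s : (\theta,s) \in \tau(\psi \vee \theta, s)\}$ yields $X = Y \cup Z$ with $Y \cap Z = \emptyset$, exactly the disjoint partition demanded by rule \textbf{TS-$\vee_S$}. The restricted strategies $\tau_1, \tau_2$ are deterministic by Lemma \ref{lemma:MSucc_Sub}, and winning and uniform as before, so the induction hypothesis gives $M \models_Y^S \psi$ and $M \models_Z^S \theta$, whence $M \models_X^S \psi \vee \theta$. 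Conversely, from a disjoint decomposition one builds $\tau$ by setting $\tau(\psi \vee \theta, s) = \{(\psi,s)\}$ for $s \in Y$ and $\{(\theta,s)\}$ for $s \in Z$, which is well-defined and single-valued precisely because the union is disjoint, and then invokes Lemma \ref{lemma:MSucc_Sup}, whose hypothesis $|\tau(\phi,s)| = 1$ is met by this construction. The existential case is analogous: a deterministic $\tau$ at $(\exists x \psi, s)$ selects a single element, giving a function $F : X \rightarrow \dom(M)$ with $X[F/x] = X[H/x]$ for $H(s) = \{F(s)\}$, matching rule \textbf{TS-$\exists_S$}; the converse direction uses the function $F$ supplied by the strict rule to define a single-valued root move and again appeals to Lemma \ref{lemma:MSucc_Sup}.

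The conjunction and universal cases are those with $T(\phi) = I$, so Player $II$ makes no choice at the root; determinism is then transmitted from the immediate subgames through Lemma \ref{lemma:MSucc_Sup}, and the remainder is verbatim Theorem \ref{theo:game_team_lax}. The main point to get right, and the only place where the disjointness of the strict decomposition and the single-valuedness of the strict existential witness are actually consumed, is the equivalence between $\tau$ being deterministic at a Player-$II$ root and the induced decomposition or witness being of the strict form. I would therefore take particular care to verify that the forward direction genuinely produces $Y \cap Z = \emptyset$ (respectively a function rather than a relation) and that the backward construction really yields singleton root moves, so that the relevant clause of Lemma \ref{lemma:MSucc_Sup} applies.
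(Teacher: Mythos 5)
Your proposal is correct and takes essentially the same route as the paper's own proof: the same structural induction reusing the lax argument verbatim outside of disjunction and existential quantification, the same disjoint decomposition $Y$, $Z$ read off the singleton root moves of a deterministic strategy (and conversely singleton root moves built from the strict, disjoint decomposition), the same function $F$ extracted from the deterministic existential move, and the same appeals to Lemmas \ref{lemma:MSucc_Sub} and \ref{lemma:MSucc_Sup} to transfer determinism downward and upward. Nothing is missing.
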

\begin{proof}
	The proof is by structural induction over $\phi$, and it runs exactly as for the lax case. The only differences occur in the cases of disjunction and existential quantification, in which the determinism of the strategies poses a restriction on the choices available to Player $II$ and for which the proof runs as follows: 
	\begin{itemize}
		\item If $\phi$ is a disjunction $\psi \vee \theta$, suppose that $\tau$ is a uniform, deterministic winning strategy for $II$ in $G^M_X(\psi \vee \theta)$. Then define the teams $Y, Z \subseteq X$ as follows:
			\begin{align*}
				&Y = \{s \in X : \tau(\psi \vee \theta, s) = \{(\psi, s)\}\};\\
				&Z = \{s \in X : \tau(\psi \vee \theta, s) = \{(\theta, s)\}\}.
			\end{align*}
			Then $Y \cup Z = X$: indeed, for all $s \in X$ it must be the case that $\emptyset \not = \tau(\psi \vee \theta, s) \subseteq \{(\psi, s), (\theta, s)\}$, and hence $s$ is in $Y$ or in $Z$ (or in both). Furthermore, $Y \cap Z = \emptyset$.\\

			Now consider the following two strategies for $II$ in $G^M_Y(\psi)$ and $G^M_Z(\theta)$ respectively: 
			\begin{itemize}
				\item $\tau_1(p) = \tau(p)$ for all positions $p$ of $G^M_Y(\psi)$; 
				\item $\tau_2(p) = \tau(p)$ for all positions $p$ of $G^M_Z(\theta)$.
			\end{itemize}
			Since all positions of $G^M_Y(\psi)$ and of $G^M_Z(\theta)$ are also positions of \\$G^M_X(\psi \vee \theta)$, $\tau_1$ and $\tau_2$ are well-defined. Furthermore, they are deterministic, since $\tau$ is so, and $(Y, \psi, \tau_1), (Z, \theta, \tau_2) \leq_M (X, \phi, \tau)$; therefore, $\tau_1$ and $\tau_2$ are uniform and winning for $G^M_Y(\psi)$ and $G^M_Z(\theta)$. By induction hypothesis, this implies that $M \models_Y \psi$ and $M \models_Z \theta$; and by the definition of the (strict) semantics for disjunction, this implies that $M \models_X \psi \vee \theta$.\\

			Conversely, suppose that $M \models_X \psi \vee \theta$, according to the strict semantics: then, by definition, there exist teams $Y$ and $Z$ such that $X = Y \cup Z$, $Y \cap Z = \emptyset$, $M \models_Y \psi$ and $M \models_Z \theta$. Then, by induction hypothesis, there exist uniform, deterministic winning strategies $\tau_1$ and $\tau_2$ for $II$ in $G^M_Y(\psi)$ and $G^M_Z(\theta)$ respectively. Then define the strategy $\tau$ for $II$ in $G^M_X(\psi \vee \theta)$ as follows:
			\begin{itemize}
				\item $\tau(\psi \vee \theta, s) = \left\{
					\begin{array}{l l}
						\{(\psi, s)\} & \mbox{ if } s \in Y;\\
						\{(\theta, s)\} & \mbox{ if } s \in Z.\\
					\end{array}
					\right.$
				\item If $p$ is $(\chi, s)$ and $\chi$ is contained in $\psi$ then $\tau(p) = \tau_1(p)$;
				\item If $p$ is $(\chi, s)$ and $\chi$ is contained in $\theta$ then $\tau(p) = \tau_2(p)$.
			\end{itemize}
			Then, by construction, we have that 
			\[
				(Y, \psi, \tau_1), (Z, \theta, \tau_2) \leq_M (X, \psi \vee \theta, \tau);
			\]
			and furthermore, $\psi$ and $\theta$ are all the immediate subformulas of $\psi \vee \theta$, and $\tau_1$ and $\tau_2$ are winning and uniform by hypothesis. Therefore $\tau$ is a uniform, deterministic winning strategy for $G^M_X(\psi \vee \theta)$, as required.
				\item If $\phi$ is $\exists x \psi$, suppose that $\tau$ is a uniform, deterministic winning strategy for $II$ in $G^M_X(\exists x \psi)$. Then define the function $F : X \rightarrow \dom(M)$ so that, for every $s \in X$, $F(s)$ is the unique element $m$ of the model such that $\tau(\exists x \psi, s) = \{(\psi, s[m/x])\}$ and consider the following strategy $\tau'$ for $II$ in $G^M_{X[F/x]}(\psi)$: 
			\[
				\tau'(p) = \tau(p) \mbox{ for all suitable } p. 
			\]
			$\tau'$ is well-defined, because any position of $G^M_{X[F/x]}(\psi)$ is also a possible position of $G^M_X(\exists x \psi)$. Furthermore, $(Y[F/x], \psi, \tau') \leq_M (Y, \exists x \psi, \tau)$, and therefore $\tau'$ is a uniform, deterministic winning strategy for $II$ in $G^M_{X[F/x]}(\psi)$. By induction hypothesis, this implies that $M \models_{X[F/x]} \psi$, and hence that $M \models_X \exists x \psi$ (with respect to the strict semantics). \\

			Conversely, suppose that $M \models_X \exists x \psi$ according to the strict semantics; then, there exists a $F$ such that $M \models_{X[F/x]} \psi$. By induction hypothesis, this means that there exists a uniform, deterministic winning strategy $\tau'$ for $II$ in $G^M_{X[F/x]}(\psi)$. Now consider the following strategy $\tau$ for $II$ in $G^M_{X}(\exists x \psi)$: 
			\begin{align*}
				&\tau(\exists x \psi, s) = \{(\psi, s[F(s)/x])\};\\
				&\tau(\theta, s) = \tau'(\tau, s) \mbox{ for all } \tau \mbox{ contained in } \psi.
			\end{align*}
			Then $(X[F/x], \psi, \tau') \leq_M (X, \exists x \psi, \tau)$, and $\psi$ is the only direct subformula of $\exists x \psi$; hence, $\tau$ is uniform, deterministic and winning, as required. 
	\end{itemize}
\end{proof}
In \cite{forster06}, Thomas Forster considers the distinction between deterministic and nondeterministic strategies for the case of the logic of branching quantifiers and points out that, in the absence of the Axiom of Choice, different truth conditions are obtained for these two cases. In the same paper, he then suggests that
\begin{quote}
	Perhaps advocates of branching quantifier logics and their descendents will tell us which semantics \textbf{[that is, the deterministic or nondeterministic one]} they have in mind.
\end{quote}
Dependence logic, inclusion logic, inclusion/exclusion logic and independence logic can certainly be seen as descendents of branching quantifier logic, and the present work strongly suggests that the semantics that we ``have in mind'' is the nondeterministic one. As we just saw, the deterministic/nondeterministic distinction in game theoretic semantics corresponds precisely to the strict/lax distinction in team semantics; and indeed, as seen in Subsection \ref{subsect:laxstrict}, for dependence logic proper (which is expressively equivalent to branching quantifier logic), the lax and strict semantics are equivalent modulo the Axiom of Choice (Proposition \ref{propo:laxeqstrict}). 

But for inclusion logic and its extensions, we have that lax and strict (and, hence, nondeterministic and deterministic) semantics are not equivalent, even in the presence of the Axiom of Choice (Propositions \ref{propo:dislaxneqstr} and \ref{propo:exlaxneqstr}), and that only the lax one satisfies Locality in the sense of Theorem \ref{theo:DLloc} (see Proposition \ref{propo:strict_nonlocal} and Theorems \ref{theo:strict_local}, \ref{theo:ielocal} for the proof).

Furthermore, as stated before, Fredrik Engstr\"om showed in \cite{engstrom10} that the lax semantics for existential quantification arises naturally from his treatment of generalized quantifiers in dependence logic.

All of this, in the opinion of the author at least, makes a convincing case for the adoption of the nondeterministic semantics (or, in terms of team semantics, of the lax one) as the natural semantics for the study of logics of imperfect information, thus suggesting an answer to Thomas Forster's question. 
\section{Definability in I/E logic (and in independence logic)}
\label{sect:defin}
In \cite{kontinenv09}, Kontinen and V\"a\"an\"anen characterized the expressive power of dependence logic formulas (Theorem \ref{theo:DLform} here), and, in \cite{kontinennu09}, Kontinen and Nurmi used a similar technique to prove that a class of teams is definable in team logic (\cite{vaananen07b}) if and only if it is expressible in full second order logic.\\

In this section, I will attempt to find an analogous result for I/E logic (and hence, through Corollary \ref{coro:IL_eq_IE}, for independence logic). One direction of the intended result is straightforward: 
\begin{theo}
\label{theo:IE2Sigma11}
	Let $\phi(\tuple v)$ be a formula of I/E logic with free variables in $\tuple v$. Then there exists an existential second order logic formula $\Phi(A)$, where $A$ is a second order variable with arity $|\tuple v|$, such that 
	\[
		M \models_X \phi(\tuple v) \Leftrightarrow M \models \Phi(\rel_{\tuple v}(X))
	\]
	for all suitable models $M$ and teams $X$.
\end{theo}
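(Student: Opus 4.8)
The plan is to prove this by structural induction on $\phi$, building for each subformula $\psi$ (with free variables among some tuple $\tuple w$) a $\Sigma_1^1$ formula $\Psi(A_{\tuple w})$, where $A_{\tuple w}$ is a relation variable of arity $|\tuple w|$, such that $M \models_Y \psi \Leftrightarrow M \models \Psi(\rel_{\tuple w}(Y))$ for every team $Y$ with domain $\tuple w$. The one fact I use throughout is that, since an assignment with domain $\tuple w$ is determined by the tuple of values it assigns, the map $Y \mapsto \rel_{\tuple w}(Y)$ is a bijection between teams with domain $\tuple w$ and relations of arity $|\tuple w|$; in particular subteams correspond exactly to subrelations, and the lax operations on teams are mirrored by first-order operations on relations.

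For the base cases the required $\Phi$ is first order in $A$. If $\phi$ is a first order literal $\alpha$ then by \textbf{TS-atom} I take $\Phi(A) = \forall \tuple v\,(A(\tuple v) \rightarrow \alpha)$; if $\phi$ is an inclusion atom $\tuple t_1 \subseteq \tuple t_2$ then by \textbf{TS-inc} I take $\Phi(A) = \forall \tuple v\,(A(\tuple v) \rightarrow \exists \tuple v'\,(A(\tuple v') \wedge \tuple t_1 = \tuple t_2'))$, where $\tuple t_2'$ is $\tuple t_2$ with its variables renamed along $\tuple v \mapsto \tuple v'$; and if $\phi$ is an exclusion atom $\tuple t_1 ~|~ \tuple t_2$ then by \textbf{TS-exc} I take $\Phi(A) = \forall \tuple v \tuple v'\,((A(\tuple v) \wedge A(\tuple v')) \rightarrow \tuple t_1 \not= \tuple t_2')$.

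For the connectives I combine the formulas $\Psi,\Theta$ furnished by the induction hypothesis, using only operations that preserve $\Sigma_1^1$: conjunction, first- and second-order existential quantification, and substitution of a first-order definable relation. For $\psi \wedge \theta$ (\textbf{TS-$\wedge$}) I take $\Phi(A) = \Psi(A) \wedge \Theta(A)$. For $\psi \vee \theta$ (\textbf{TS-$\vee_L$}) I introduce fresh $|\tuple v|$-ary relation variables $B,C$ and set $\Phi(A) = \exists B \exists C\,(\forall \tuple v\,(A(\tuple v) \leftrightarrow (B(\tuple v) \vee C(\tuple v))) \wedge \Psi(B) \wedge \Theta(C))$, the first conjunct expressing precisely the splitting $X = Y \cup Z$ through the team--relation bijection. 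For $\exists x\,\psi$ (\textbf{TS-$\exists_L$}), letting $\Psi(D)$ be the induction formula for $\psi$ with domain $\tuple v x$ and $D$ of arity $|\tuple v|+1$, I note that the teams of the form $X[H/x]$ are exactly the teams with domain $\tuple v x$ whose projection to $\tuple v$ is $X$, and take $\Phi(A) = \exists D\,(\forall \tuple v\,(A(\tuple v) \leftrightarrow \exists x\, D(\tuple v, x)) \wedge \Psi(D))$. For $\forall x\,\psi$ (\textbf{TS-$\forall$}) the relation of $X[M/x]$ is the cylinder $\{(\tuple a, m) : A(\tuple a)\}$, which is first-order definable from $A$, so I obtain $\Phi(A)$ from $\Psi(D)$ by replacing every occurrence of $D(\tuple v, x)$ with $A(\tuple v)$.

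The verification in each case is immediate from the relevant semantic rule together with the team--relation bijection, and one checks that the equivalence holds also for the empty team, which satisfies every I/E formula and for which each $\Phi$ above holds vacuously. The only point that genuinely needs care is the bookkeeping that keeps the construction inside $\Sigma_1^1$: since no inductively built formula is ever placed under a negation or a universal second-order quantifier, and since $\Sigma_1^1$ is closed under conjunction, existential (first- and second-order) quantification, and substitution of first-order definable relations, every $\Phi$ is existential second order. I expect the existential-quantifier clause to be the most delicate step, as it is there that the lax rule \textbf{TS-$\exists_L$} (choice functions into nonempty subsets) must be translated faithfully into the projection condition $\forall \tuple v\,(A(\tuple v) \leftrightarrow \exists x\, D(\tuple v, x))$.
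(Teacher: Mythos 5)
Your proposal is correct and follows essentially the same route as the paper: the paper's own proof simply declares the result "an unproblematic induction over $\phi$" following the analogous translations for dependence logic and independence logic, and what you have written out (atoms translated by their satisfaction conditions, lax disjunction as an existentially quantified union of relations, lax existential quantification as an existentially quantified relation with the right projection, universal quantification as a cylinder substitution) is precisely that induction made explicit, including the correct observation that no second-order variable need occur only negatively here. The one detail to add is that your characterization of the teams $X[H/x]$ as the teams with domain $\tuple v x$ projecting onto $X$ presupposes that the quantified variable $x$ does not already occur in $\tuple v$, which is harmless once bound variables are renamed apart from the domain tuple.
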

\begin{proof}
	The proof is an unproblematic induction over the formula $\phi$, and follows closely the proof of the analogous results for dependence logic (\cite{vaananen07}) or independence logic (\cite{gradel10}). 
\end{proof}

The other direction, instead, requires some care:\footnote{The details of this proof are similar to the ones of \cite{kontinenv09} and \cite{kontinennu09}.}
\begin{theo}
\label{theo:Sigma112IL}
	Let $\Phi(A)$ be a formula in $\Sigma_1^1$ such that $\free(\Phi) = \{A\}$, and let $\tuple v$ be a tuple of distinct variables with $|\tuple v| = \arity(A)$. Then there exists an I/E logic formula $\phi(\tuple v)$ such that 
	\[
		M \models_X \phi(\tuple v) \Leftrightarrow M \models \Phi(\rel_{\tuple v}(X))
	\]
	for all suitable models $M$ and nonempty teams $X$.
\end{theo}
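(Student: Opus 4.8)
The plan is to establish the converse of Theorem~\ref{theo:IE2Sigma11}: given a $\Sigma_1^1$ formula $\Phi(A)$ with $\free(\Phi)=\{A\}$, I would build an I/E logic formula $\phi(\tuple v)$ simulating it. First I would put $\Phi$ into a second-order Skolem normal form
\[
\Phi(A) \equiv \exists R_1 \ldots R_k \, \forall \tuple x \, \exists \tuple y \, \theta,
\]
where $\theta$ is quantifier-free over the vocabulary $\sigma \cup \{A, R_1, \ldots, R_k\}$, moving all second-order quantifiers to the front and Skolemizing the first-order matrix. The guiding idea is that the given team $X$ already carries the interpretation of $A$, namely $A = \rel_{\tuple v}(X)$, and this projection is preserved by locality (Theorem~\ref{theo:ielocal}) as we quantify fresh variables; so $\phi$ only has to guess the relations $R_i$ and the Skolem witnesses $\tuple y$, and then verify $\theta$.

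Second, I would translate each ingredient. The Skolem functions are handled by ordinary existential quantification of $\tuple y$ together with \emph{dependence atoms} $=\!\!(\tuple x, y_j)$ forcing each witness to be a genuine function of $\tuple x$; these atoms are available because, by Theorem~\ref{theo:dep_to_excl}, dependence atoms are definable using exclusion atoms. Each relational guess $\exists R_i$ (say of arity $r_i$) is encoded by universally ranging a fresh address tuple $\tuple w_i$ over the whole model, existentially choosing a marker $c_i$ taking one of two fixed distinct values, and imposing $=\!\!(\tuple w_i, c_i)$, so that $c_i$ is the characteristic function of a well-defined relation $R_i$; the consistency of this guess across all universal branches is exactly what the functional dependence enforces. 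Inside the translated matrix, a positive literal $R_i(\tuple t)$ becomes an inclusion atom asserting that $\tuple t$ lies among the addresses marked ``in'', a negative literal $\lnot R_i(\tuple t)$ becomes the corresponding exclusion atom, and atoms $A(\tuple t)$ and $\lnot A(\tuple t)$ become $\tuple t \subseteq \tuple v$ and $\tuple t ~|~ \tuple v$ respectively, while first-order $\sigma$-literals are kept unchanged. It is precisely because inclusion and exclusion atoms let us test membership \emph{and} non-membership, with no monotonicity constraint, that we recover all of $\Sigma_1^1$ rather than only the downward-closed, negatively-occurring fragment of Theorem~\ref{theo:DLform}.

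The correctness argument then matches each quantifier of $\Phi$ with the corresponding team operation of $\phi$, checking in both directions that a satisfying valuation of $\Phi(\rel_{\tuple v}(X))$ yields a splitting of the expanded team realizing $\phi$, and conversely. The main obstacle I expect is the faithful encoding of relational quantification: inclusion and exclusion atoms are evaluated against the \emph{current} team, so since the address tuple $\tuple w_i$ was universally quantified, a naive inclusion test would compare $\tuple t$ against all of $M^{r_i}$ rather than against $R_i$. One must therefore arrange, by disjunctive splitting, that the address columns seen at the moment of each membership query contain exactly the ``in'' part of the guessed relation, and do so uniformly across the possibly many occurrences of $R_i$ in $\theta$ and across all branches of the universal quantifiers. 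Reconciling these splits with the preservation of the $\tuple v$-columns (needed to keep $A$ intact) and with the functional dependence that pins down the guess is the technical heart of the proof; the restriction to nonempty teams $X$ is needed because over the empty team the inclusion and exclusion atoms are vacuously satisfied and the correspondence with $\Phi$ breaks down. Once this is in place, combining the result with Corollary~\ref{coro:IL_eq_IE} yields the analogous characterization for independence logic, answering the open problem of \cite{gradel10}.
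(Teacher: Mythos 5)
Your high-level strategy (Skolemize $\Phi$, simulate second-order guesses by existentially quantified variables pinned down by dependence atoms, and use inclusion/exclusion atoms for the membership tests) starts out parallel to the paper's, but the step you yourself flag as ``the technical heart'' is exactly where the proposal has a genuine gap, and it is not a detail that can be deferred. Translating the literals of $\theta$ one by one --- $R_i(\tuple t)$ as an inclusion atom, $\lnot R_i(\tuple t)$ as an exclusion atom, $A(\tuple t)$ as $\tuple t \subseteq \tuple v$ and $\lnot A(\tuple t)$ as $\tuple t ~|~ \tuple v$ --- does not work as stated, because these atoms are evaluated on whatever subteam survives the disjunctive splittings forced by the Boolean structure of $\theta$, not against the fixed relations $A$ and $R_i$. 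Concretely, on a subteam $Z$ the atom $\tuple t ~|~ \tuple v$ only yields $Z(\tuple t) \cap Z(\tuple v) = \emptyset$, which is strictly weaker than the needed $Z(\tuple t) \cap \rel_{\tuple v}(X) = \emptyset$ once $Z$ has lost some $\tuple v$-values, so satisfaction of your formula no longer implies $\Phi(\rel_{\tuple v}(X))$; dually, $\tuple t \subseteq \tuple v$ yields $Z(\tuple t) \subseteq Z(\tuple v)$, which is strictly stronger than $Z(\tuple t) \subseteq \rel_{\tuple v}(X)$, so the converse implication is endangered as well. The same problem hits the address/marker encoding of the $R_i$: after any split, the $\tuple w_i c_i$ columns of a subteam need not enumerate the whole characteristic function you guessed. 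You acknowledge this obstacle, but you give no mechanism that resolves it, and for a matrix $\theta$ containing several relation symbols, each occurring several times positively and negatively, no uniform such mechanism is apparent.

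The paper's proof is built precisely to sidestep this obstacle, by two reductions that are missing from your plan. First, it replaces $\Phi(A)$ by $\Phi^*(A) = \exists B\,(\forall \tuple x (A\tuple x \leftrightarrow B\tuple x) \wedge \Phi(B))$, so that $A$ occurs in exactly one atom of the matrix, a biconditional with a guessed relation $B$. Second, \emph{all} guessed relations are eliminated in favour of Skolem functions, with $B\tuple x$ expressed as $f_1(\tuple x) = f_2(\tuple x)$ (the Kontinen--V\"a\"an\"anen trick); after replacing each $f_i(\tuple w_i)$ by a variable $z_i$ subject to $=\!\!(\tuple w_i, z_i)$, every literal of the matrix other than the single $A$-biconditional becomes first order: membership tests are equality tests among the $z_i$ inside one assignment and need no inclusion or exclusion atoms at all. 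The biconditional itself is rendered by one disjunction $(\tuple v \subseteq \tuple x \wedge z_1 = z_2) \vee (\tuple v ~|~ \tuple x \wedge z_1 \neq z_2)$ whose branch condition depends only on the universally quantified $\tuple x$ (through the guessed $B$); since the team at that point is $X[M/\tuple x \tuple y][\tuple F/\tuple z]$, every assignment of $X$ has extensions in each nonempty branch, so the $\tuple v$-columns of both branches provably equal all of $\rel_{\tuple v}(X)$, and the two atoms genuinely express the two inclusions between $\rel_{\tuple v}(X)$ and $B$. Without these reductions --- $A$ occurring only once, and all remaining second-order quantification turned into equality tests on functionally determined variables --- the correctness argument of your literal-by-literal translation cannot be completed.
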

\begin{proof}
	It is easy to see that any $\Phi(A)$ as in our hypothesis is equivalent to the formula
	\[
		\Phi^*(A) = \exists B ( \forall \tuple x (A \tuple x \leftrightarrow B \tuple x) \wedge \Phi(B)),
	\]
	in which the variable $A$ occurs only in the conjunct $\forall \tuple x(A \tuple x \leftrightarrow B \tuple x)$. Then, as in \cite{kontinenv09}, it is possible to write $\Phi^*(A)$ in the form 
	\[
		\exists \tuple f~ \forall \tuple x \tuple y ( (A \tuple x \leftrightarrow f_1(\tuple x) = f_2(\tuple x)) \wedge \psi(\tuple x, \tuple y, \tuple f)),
	\]
	where $\tuple f = f_1 f_2 \ldots f_n$, $\psi(\tuple f, x, y)$ is a quantifier-free formula in which $A$ does not appear, and each $f_i$ occurs only as $f(\tuple w_i)$ for some fixed tuple of variables $\tuple w_i \subseteq \tuple x \tuple y$. 

	Now define the formula $\phi(\tuple v)$ as
	\[
		\forall \tuple x \tuple y ~\exists \tuple z \left( 
		\bigwedge_i =\!\!(\tuple w_i, z_i) \wedge ( ((\tuple v \subseteq \tuple x \wedge z_1 = z_2) \vee (\tuple v ~|~ \tuple x \wedge z_1 \not = z_2)) \wedge \psi'(\tuple x, \tuple y, \tuple z))\right),
	\]
	where $\psi'(\tuple x, \tuple y, \tuple z)$ is obtained from $\psi(\tuple x, \tuple y, \tuple f)$ by substituting each $f_i(\tuple w_i)$ with $z_i$, and the dependence atoms are used as shorthands for the corresponding expressions of I/E logic. 

	Now we have that $M \models_X \phi(\tuple v) \Leftrightarrow M \models \Phi^*(\rel_{\tuple v}(X))$:
	
	Indeed, suppose that $M \models_X \phi(\tuple v)$. Then, by construction, for each $i = 1 \ldots n$ there exists a function $F_i$, depending only on $\tuple w_i$, such that for $Y = X[M/\tuple x \tuple y][\tuple F / \tuple z]$ 
\[
	M \models_{Y} ((\tuple v \subseteq \tuple x \wedge z_1 = z_2) \vee (\tuple v ~|~ \tuple x \wedge z_1 \not = z_2)) \wedge \psi'(\tuple x, \tuple y, \tuple z).
\]

Therefore, we can split $Y$ into two subteams $Y_1$ and $Y_2$ such that $M \models_{Y_1} \tuple v \subseteq \tuple x \wedge z_1 = z_2$ and $M \models_{Y_2}  \tuple v ~|~ \tuple x \wedge z_1 \not = z_2$.

Now, for each $i$ define the function $f_i$ so that, for every tuple $\tuple m$ of the required arity, $f_i(\tuple m)$ corresponds to $F_i(s)$ for an arbitrary $s \in X[M/\tuple x \tuple y]$ with $s(\tuple w_i) = \tuple m$, and let $o$ be any assignment with domain $\tuple x \tuple y$. 

Thus, if we can prove that $M \models_o ((\rel_{\tuple v}(X)) \tuple x \leftrightarrow f_1(\tuple x) =  f_2 (\tuple x)) \wedge \psi(\tuple x, \tuple y, \tuple f)$ then the left-to-right direction of our proof is done. 

First of all, suppose that $M \models_o (\rel_{\tuple v}(X)) \tuple x$, that is, that $o(\tuple x) = \tuple m = s(\tuple v)$ for some $s \in X$. 

Then choose an arbitrary tuple of elements $\tuple r$ and consider the assignment $h = s[\tuple m/\tuple x][\tuple r/\tuple y][\tuple F/\tuple z] \in Y$. This $h$ cannot belong to $Y_2$, since $h(\tuple v) = s(\tuple v) = \tuple m = h(\tuple x)$, and therefore it is in $Y_1$ and $h(z_1) = h(z_2)$.

By the definition of the $f_i$, this implies that $f_1 (\tuple m) = f_2 (\tuple m)$, as required. \\

Analogously, suppose that $M, \not \models_o (\rel_{\tuple v}(X))\tuple x$, that is, that $o(\tuple x) = \tuple m \not = s(\tuple v)$ for all $s \in X$. 
Then pick an arbitrary such $s \in X$ and an arbitrary tuple of elements $\tuple r$, and consider the assignment
\[
	h = s[\tuple m/\tuple x][\tuple r/\tuple y][\tuple F/\tuple z] \in Y.
\]

If $h$ were in $Y_1$, there would exist an assignment $h' \in Y_1$ such that $h'(\tuple v) = h(\tuple x) = \tuple m$; but this is impossible, and therefore $h \in Y_2$. Hence $h(z_1) \not = h(z_2)$, and therefore $f_1(\tuple m) \not = f_2(\tuple m)$. 

Putting everything together, we just proved that
\[
	M \models_o R\tuple x \Leftrightarrow f_1(\tuple x) =  f_2 (\tuple x)
\]
for all assignments $o$ with domain $\tuple x \tuple y$, and we still need to verify that $M \models_o \psi(\tuple x, \tuple y, f)$ for all such $o$. 

But this is immediate: indeed, let $s$ be an arbitrary assignment of $X$, and construct the assignment
\[
	h = s[o(\tuple x \tuple y)/\tuple x \tuple y][\tuple F/\tuple z] \in X[M/\tuple x \tuple y][\tuple F/\tuple z].
\]

Then, since $M \models_{X[M/\tuple x \tuple y][\tuple F/\tuple z]} \psi'(\tuple x, \tuple y, \tuple z)$ and $\psi'(\tuple x, \tuple y, \tuple z)$ is first order, $M \models_{\{h\}} \psi'(\tuple x, \tuple y, \tuple z)$; but $\psi'(\tuple x, \tuple y, \tuple f(\tuple x \tuple y))$ is equivalent to $\psi(\tuple x, \tuple y, \tuple f)$ and $h(z_i) =  f(h(\tuple w_i)) = f(o(\tuple w_i))$, and therefore
\[
	M \models_o \psi(\tuple x, \tuple y, \tuple f)
\]
as required.
$\\$

Conversely, suppose that $M \models_s  (\rel_{\tuple v}(X))\tuple x \leftrightarrow (f_1(\tuple x) =  f_2 (\tuple x)) \wedge \psi(\tuple x, \tuple y, \tuple f)$ for all assignments $s$ with domain $\tuple x \tuple y$ and for some fixed choice of the tuple of functions $\tuple f$.

Then let $\tuple F$ be such that, for all assignments $h$ and for all $i$,
\[
	F_i(h) = f_i(h(\tuple w_i))
\]
and consider $Y = X[M/\tuple x \tuple y][F/\tuple z]$. 

Clearly, $Y$ satisfies the dependency conditions; furthermore, it satisfies $\psi'(\tuple x, \tuple y, \tuple z)$, because for every assignment $h \in Y$ and every $i \in 1 \ldots n$ we have that $h(z_i) = F_i(h) = f_i(h(\tuple w_i))$.

Finally, we can split $Y$ into two subteams $Y_1$ and $Y_2$ as follows:
\begin{align*}
& Y_1 = \{o \in Y : o(\tuple z_1) = o(\tuple z_2)\};\\
& Y_2 = \{o \in Y : o(\tuple z_1) \not = o(\tuple z_2)\}.
\end{align*}

It is then trivially true that $M \models_{Y_1} z_1 = z_2$ and $M \models_{Y_2} z_1 \not= z_2$, and all that is left to do is proving that $M \models_{Y_1} \tuple v \subseteq \tuple x$ and $M \models_{Y_2} \tuple v ~|~ \tuple x$.

As for the former, let $o \in Y_1$: then, since $o(z_1) = o(z_2)$, $f_1(o(\tuple x)) = f_2(o(\tuple x))$.

This implies that $o(\tuple x) \in \rel_{\tuple v}(X)$, and hence that there exists an assignment $s' \in X$ with $s'(\tuple v) = o(\tuple x)$. 

Now consider the assignment 
\[
	o' = s'[o(\tuple x \tuple y)/\tuple x \tuple y][\tuple F/\tuple z]:
\]
since in $Y$ the values of $\tuple z$ depend only on the values of $\tuple x \tuple y$ and since $o(z_1) = o(z_2)$, we have that $o'(z_1) = o'(z_2)$ and hence $o' \in Y_1$ too. But $o'(\tuple v) = s'(\tuple v) = o(\tuple x)$, and since $o$ was an arbitrary assignment of $Y_1$, this implies that $M \models_{Y_1} \tuple v \subseteq \tuple x$. 

Finally, suppose that $o \in Y_2$. Then, since $o(z_1) \not = o(z_2)$, we have that $f_1(o(\tuple x)) \not= f_2(o(\tuple x))$; and therefore, $o(\tuple x) \not \in \rel_{\tuple v}(X)$, that is, for all assignments $s \in X$ it holds that $s(\tuple v) \not = o(\tuple x)$. Then the same holds for all $o' \in Y_2$. 

This concludes the proof.
\end{proof}
Since by Corollary \ref{coro:IL_eq_IE} we already know independence logic and I/E logic have the same expressive power, this has the following corollary: 
\begin{coro}
	\label{coro:ILform}
	Let $\Phi(A)$ be an existential second order formula with $\free(\Phi) = A$, and let $\tuple v$ be any set of variables such that $|\tuple v| = \arity(A)$. Then there exists an independence logic formula $\phi(\tuple v)$ such that 
	\[
		M \models_X \phi(\tuple v) \Leftrightarrow M \models \Phi(\rel_{\tuple v}(X)) 
	\]
	for all suitable models $M$ and teams $X$. 
\end{coro}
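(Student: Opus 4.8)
The plan is to derive the statement directly from the two results just established, namely Theorem \ref{theo:Sigma112IL} (every $\Sigma_1^1$ formula with a single free relation symbol is defined, over nonempty teams, by an I/E logic formula) and Corollary \ref{coro:IL_eq_IE} (I/E logic and independence logic are expressively equivalent on the level of formulas). Since an ``existential second order formula'' is simply a $\Sigma_1^1$ formula, nothing further needs to be said on the second order side, and the argument is a two-step composition.

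First I would apply Theorem \ref{theo:Sigma112IL} to the given $\Phi(A)$, using the tuple $\tuple v$ (which satisfies $|\tuple v| = \arity(A)$). This produces an I/E logic formula $\psi(\tuple v)$ with
\[
	M \models_X \psi(\tuple v) \Leftrightarrow M \models \Phi(\rel_{\tuple v}(X))
\]
for all suitable models $M$ and all nonempty teams $X$. Next I would invoke Corollary \ref{coro:IL_eq_IE} to obtain an independence logic formula $\phi(\tuple v)$ equivalent to $\psi(\tuple v)$; the equivalence furnished by that corollary holds for every model and every team, so
\[
	M \models_X \phi(\tuple v) \Leftrightarrow M \models_X \psi(\tuple v)
\]
for all $M$ and $X$. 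Chaining the two displayed equivalences then gives $M \models_X \phi(\tuple v) \Leftrightarrow M \models \Phi(\rel_{\tuple v}(X))$, which is exactly the claim.

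There is essentially no obstacle, the substantive work having been carried out in the two cited results; the only point deserving attention is the empty team. For $X = \emptyset$ the left-hand side $M \models_X \phi(\tuple v)$ is always true, by the empty team property inherited from team semantics, whereas the right-hand side reduces to $M \models \Phi(\emptyset)$, which need not hold for an arbitrary $\Phi$. Accordingly the equivalence is to be read, precisely as in Theorem \ref{theo:Sigma112IL}, for \emph{nonempty} teams $X$; one could recover an unrestricted statement by replacing $\Phi$ with a formula that additionally guards against the empty argument, but this refinement is not needed for the intended application to Gr\"adel and V\"a\"an\"anen's open problem.
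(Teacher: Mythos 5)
Your proof is correct and follows essentially the same route as the paper, whose entire proof is the one-line observation that the claim follows by composing Theorem~\ref{theo:Sigma112IL} with Corollary~\ref{coro:IL_eq_IE}. Your caveat about the empty team is also well taken: since the empty team satisfies every independence logic formula while $M \models \Phi(\emptyset)$ can fail, the equivalence must indeed be read for nonempty teams only, a restriction that Theorem~\ref{theo:Sigma112IL} states explicitly but that the corollary as printed omits.
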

Finally, by Fagin's Theorem (\cite{fagin74}) this gives an answer to Gr\"adel and V\"a\"an\"anen's question: 
\begin{coro}
All NP properties of teams are expressible in independence logic.
\end{coro}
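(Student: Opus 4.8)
The plan is to obtain the statement as an immediate combination of Corollary \ref{coro:ILform} with Fagin's Theorem (\cite{fagin74}), so the real work is conceptual bookkeeping rather than any new construction. First I would make precise what an \emph{NP property of teams} is, in the sense of the open problem of Gr\"adel and V\"a\"an\"anen. Such a property is a property of the relation $\rel_{\tuple v}(X)$ associated to a team $X$ over a \emph{finite} model $M$, which is decidable in nondeterministic polynomial time in the size of a reasonable encoding of the pair $(M, \rel_{\tuple v}(X))$. The key observation is that this is exactly a class of finite structures over the signature of $M$ \emph{augmented by a fresh relation symbol} $A$ of arity $|\tuple v|$, where the intended interpretation of $A$ is $\rel_{\tuple v}(X)$; the property being in NP means precisely that this class of $A$-expanded structures is an NP class of finite structures.

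Second, I would invoke Fagin's Theorem in the form: a class of finite structures (over a fixed finite signature) is in NP if and only if it is the class of models of some existential second order sentence. Applying this to the signature enlarged by $A$, I obtain a $\Sigma_1^1$ sentence $\Phi(A)$ in which $A$ is the only non-logical symbol outside the signature of $M$ -- that is, $A$ plays the role of the single free second order variable required by Corollary \ref{coro:ILform} -- such that, for every finite $M$ and every team $X$ with domain $\tuple v$,
\[
	X \text{ has the property} \Leftrightarrow M \models \Phi(\rel_{\tuple v}(X)).
\]

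Third, I would feed this $\Phi(A)$, together with the tuple $\tuple v$ (which satisfies $|\tuple v| = \arity(A)$ by construction), into Corollary \ref{coro:ILform}. That corollary returns an independence logic formula $\phi(\tuple v)$ for which $M \models_X \phi(\tuple v) \Leftrightarrow M \models \Phi(\rel_{\tuple v}(X))$ for all suitable $M$ and teams $X$. Chaining this equivalence with the one furnished by Fagin's Theorem shows that $\phi(\tuple v)$ defines exactly the given NP property of teams, which completes the argument.

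The main thing to be careful about -- rather than a genuine obstacle -- is keeping the conventions aligned. I would stress that both Fagin's Theorem and the open problem are formulated over finite models, and that the translation direction we actually need, from $\Sigma_1^1$ to independence logic, is precisely the one supplied by Corollary \ref{coro:ILform} (ultimately Theorem \ref{theo:Sigma112IL}), which holds over all models and hence a fortiori over finite ones. I would also check the edge case of the empty team, since Theorem \ref{theo:Sigma112IL} is stated for nonempty teams while Corollary \ref{coro:ILform} is stated for all teams; for a given NP property this is harmless, as one can simply decide separately whether the property is to hold of the empty team and adjust $\phi$ accordingly. Apart from this, every step is a direct appeal to a previously established result, so no substantial difficulty remains.
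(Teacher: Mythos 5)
Your proposal is correct and follows exactly the paper's own route: the paper obtains this corollary immediately by combining Corollary \ref{coro:ILform} with Fagin's theorem, which is precisely your argument, only stated more tersely there. One small caution: your proposed fix for the empty-team edge case does not quite work as stated, because the empty team satisfies \emph{every} independence logic formula (so a property failing on the empty team cannot be captured exactly by any formula), but this convention issue is equally glossed over by the paper and does not affect the substance of the argument.
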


This result has far-reaching consequences. First of all, it implies that independence logic (or, equivalently, I/E logic) is the most expressive logic of imperfect information which only deals with existential second order properties. Extensions of independence logic can of course be defined; but unless they are capable of expressing some property which is not existential second order (as, for example, is the case for the intuitionistic dependence logic of \cite{yang10}, or for the $BID$ logic of \cite{abramsky08}), they will be expressively equivalent to independence logic proper.  As (Jouko V\"a\"an\"anen, private communication) pointed out, this means that independence logic is \emph{maximal} among the logics of imperfect information which always generate existential second order properties of teams. In particular, \emph{any} dependency condition which is expressible as an existential second order property over teams can be expressed in independence logic: and as we will see in the next section, this entails that such a logic is capable of expressing a great amount of the notions of dependency considered by database theorists.
\section{Equality generating dependencies, tuple generating dependencies and independence logic}
\label{sect:eqtupgen}
In Database Theory, two of the most general notions of dependence are \emph{tuple generating} and \emph{equality generating} dependencies. 

In brief, a \emph{tuple generating dependency} over a database relation $R$ is a sentence of the form 
\[
	\Delta(A) = \forall x_1 \ldots x_n (\phi(x_1 \ldots x_n) \rightarrow \exists z_1 \ldots z_k \psi(x_1 \ldots x_n, z_1 \ldots z_k))
\]
where $A$ is a second order variable with arity equal to the number of attributes of $R$.\footnote{In other words, if we consider $R$ as a relation in first order logic then $\arity(A) = \arity(R)$.} and $\phi$ and $\psi$ are conjunctions of atoms of the form $A \tuple t$ or $\tuple t_1 = \tuple t_2$ for some terms $\tuple t$, $\tuple t_1$ and $\tuple t_2$ in the empty vocabulary and with free variables in $x_1 \ldots x_n$.
  
An \emph{equality generating dependency} is defined much in the same way, except that $\psi$ is a single equality atom instead. \\

Then, given a domain of predication $M$, a relation $R$ is said to satisfy a (tuple-generating or equality-generating) dependency $\Delta$ if and only if $M \models \Delta(R)$ in the usual first order sense.

As an example of the expressive power of tuple-generating and equality-generating dependencies, let us observe that dependency atoms correspond to equality generating dependencies and that independence atoms correspond to tuple generating dependencies: indeed, for example, $M \models_X =\!\!(x, y)$ if and only if 
\[
M \models \forall x y_1 y_2 \tuple z_1 \tuple z_2 ((\rel(X))xy_1\tuple z_1 \wedge (\rel(X))x y_2 \tuple z_2 \rightarrow y_1 = y_2)
\]
where $|\tuple z_1| = |\tuple z_2| = |\dom(X) \backslash \{x,y\}|$, and $M \models_X \indep{x}{y}{z}$ if and only if 
\begin{align*}
M \models & \forall x y_1 y_2 z_1 z_2 \tuple w_1 \tuple w_2 ( ( (\rel(X)) x y_1 z_1 \tuple w_1 \wedge  (\rel(X)) x y_2 z_2 \tuple w_2) \rightarrow\\
& \rightarrow \exists \tuple w_3  (\rel(X)) x y_1 z_2 \tuple w_3).
\end{align*}

From the main result of the previous section, it is easy to see that I/E logic (and, as a consequence, independence logic) is capable to express all tuple and equality generating dependencies:
\begin{propo}
	Let $\Delta(A)$ be a tuple generating or equivalent generating dependency, and let $\tuple v$ be a tuple of distinct variables with $|\tuple v| = \arity(A)$. Then there exists an I/E logic (or independence logic) formula $\phi(\tuple v)$ such that 
	\[
		M \models_X \phi(\tuple v) \Leftrightarrow M \models \Delta(\rel_{\tuple v}(X))
	\]
	for all suitable models $M$ and all teams $X$ with $\tuple v \subseteq \dom(X)$.
\end{propo}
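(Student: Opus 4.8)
The plan is to observe that a tuple-generating or equality-generating dependency is nothing more than a first-order sentence in the vocabulary whose only non-logical symbol is the second-order variable $A$, so that the statement reduces immediately to Theorem \ref{theo:Sigma112IL}. Indeed, the defining shape $\forall x_1 \ldots x_n (\phi \rightarrow \exists z_1 \ldots z_k\, \psi)$ (with the existential block absent in the equality-generating case) is built entirely out of first-order quantifiers, an implication, and conjunctions of atoms $A \tuple t$ and $\tuple t_1 = \tuple t_2$ over the empty function-and-relation vocabulary; hence $\Delta(A)$ is first order, and \emph{a fortiori} a $\Sigma_1^1$ formula, whose only free variable is the second-order variable $A$. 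This is the entire conceptual content of the proposition: the hard work has already been done in the previous section.

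Concretely, I would set $\Phi(A) := \Delta(A)$ and check that it meets the hypotheses of Theorem \ref{theo:Sigma112IL}: it is $\Sigma_1^1$, its only free variable is $A$, and $\arity(A) = |\tuple v|$ by assumption. Applying that theorem yields an I/E logic formula $\phi(\tuple v)$ with free variables among $\tuple v$ such that $M \models_X \phi(\tuple v) \Leftrightarrow M \models \Delta(\rel_{\tuple v}(X))$ for every suitable model $M$ and every \emph{nonempty} team $X$ of domain $\tuple v$. To obtain the statement for arbitrary teams $X$ with $\tuple v \subseteq \dom(X)$, I would invoke locality (Theorem \ref{theo:ielocal}): since $\free(\phi) \subseteq \tuple v$ we have $M \models_X \phi(\tuple v) \Leftrightarrow M \models_{X_{\upharpoonright \tuple v}} \phi(\tuple v)$, while $\rel_{\tuple v}(X) = \rel(X_{\upharpoonright \tuple v})$, so the general case follows from the domain-$\tuple v$ case. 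Finally, by Corollary \ref{coro:IL_eq_IE} the same $\phi$ may be replaced by an equivalent independence logic formula, covering both formulations at once.

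The one genuine obstacle is the empty team, which Theorem \ref{theo:Sigma112IL} explicitly excludes, and I expect this edge case to be where the only real verification lies. Here I would argue that both sides of the equivalence hold vacuously. On the syntactic side, I/E logic has the empty-team property: by rules \textbf{TS-inc} and \textbf{TS-exc} of Definition \ref{def:inc_exc}, together with the rules for literals, connectives and quantifiers, a routine induction shows $M \models_\emptyset \phi(\tuple v)$ for every I/E formula $\phi$. On the semantic side, $\rel_{\tuple v}(\emptyset)$ is the empty relation; and since (under the standard safety convention for such dependencies) the antecedent $\phi$ is a conjunction of relational atoms binding its variables, it cannot be satisfied once $A$ is interpreted as $\emptyset$, so $M \models \Delta(\rel_{\tuple v}(\emptyset))$ holds vacuously. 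Thus the equivalence extends to $X = \emptyset$ as well, and the proof is complete.
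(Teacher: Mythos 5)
Your proposal is correct and follows essentially the same route as the paper: the paper's entire proof is the observation that $\Delta(A)$ is a first-order (hence \emph{a fortiori} $\Sigma_1^1$) formula with free variable $A$, so Theorem \ref{theo:Sigma112IL} yields the I/E formula and Corollary \ref{coro:IL_eq_IE} transfers it to independence logic. Your additional verifications --- the locality argument for teams with $\dom(X) \supsetneq \tuple v$ and the empty-team analysis (where you rightly note that one needs the standard safety convention on the antecedent, since the paper's bare definition would otherwise admit dependencies failing on the empty relation) --- are details the paper's one-line proof leaves implicit, not a different approach.
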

\begin{proof}
	$\Delta(A)$ is definable by a first order formula, and hence by Theorem \ref{theo:Sigma112IL} it is expressible in I/E logic (and therefore by independence logic too, by Corollary \ref{coro:IL_eq_IE}).
\end{proof}
Hence, many of the properties which are discussed in the context of Database Theory can be expressed through independence logic. The vast expressive power of this formalism comes with a very high computational cost, of course; but it is the hope of the author that the result of this work may provide a justification to the study of this logic (and, more in general, of logics of imperfect information) as a general theoretic framework for reasoning about knowledge bases. 
\section{Acknowledgements}
The author wishes to thank Jouko V\"a\"an\"anen for many valuable insights and suggestions. Furthermore, he thanks Erich Gr\"adel for having suggested a better notation for inclusion and exclusion dependencies, and Allen Mann for having mentioned Thomas Forster's paper. Finally, the author thankfully acknowledges the support of the EUROCORES LogICCC LINT programme.



\bibliographystyle{plain}
\bibliography{biblio}







\end{document}